\theoremstyle{definition}
\numberwithin{equation}{section}
\newtheorem{theorem}[equation]{Theorem}
\newtheorem{lemma}[equation]{Lemma}
\newtheorem{proposition}[equation]{Proposition}
\newtheorem{corollary}[equation]{Corollary}
\newtheorem{definition}[equation]{Definition}
\newtheorem{example}[equation]{Example}
\newtheorem{remark}[equation]{Remark}
\date{}
\begin{document}

\title{Galois bimodules and integrality 
of PI comodule algebras over invariants}

\author{Pavel Etingof}
\address{Department of Mathematics, Massachusetts Institute of Technology,
Cambridge, MA 02139, USA}
\email{etingof@math.mit.edu}

\begin{abstract}
Let $A$ be a comodule algebra for a finite dimensional Hopf
algebra $K$ over an algebraically closed field $k$, 
and let $A^K$ be the subalgebra of invariants. 
Let $Z$ be a central subalgebra 
in $A$, which is a domain with quotient field $Q$.
Assume that $Q\otimes_Z A$ is a central simple algebra over $Q$, 
and either $A$ is a finitely generated torsion-free $Z$-module and 
$Z$ is integrally closed in $Q$, or $A$ is a finite projective
$Z$-module. Then we show that $A$ and $Z$ are 
integral over the subring of central invariants $Z\cap A^K$. 
More generally, we show that this 
statement is valid under the same assumptions 
if $Z$ is a reduced algebra with quotient ring $Q$, 
and $Q\otimes_Z A$ is a semisimple algebra with center $Q$. 
In particular, the statement holds 
for a coaction of $K$ on a prime PI algebra $A$ 
whose center $Z$ is an integrally closed finitely generated 
domain over $k$. For the proof, we develop a theory of Galois bimodules
over semisimple algebras finite over the center. 
\end{abstract}

\maketitle


\section{Introduction}

The goal of this paper is to prove a noncommutative analogue (for
PI algebras) of Skryabin's
integrality theorem for coactions of a finite dimensional Hopf
algebra $K$ on a commutative algebra $A$. In \cite{S1},
Theorem 2.5 and Proposition 2.7, Skryabin showed under 
a very minor assumption (that $A$ has no nonzero 
nilpotent $K$-costable ideals, which is not needed in positive
characteristic) that such an algebra $A$ is integral over its
invariants $A^K$. This is a generalization of the classical theorem 
of E. Noether saying that a commutative algebra is integral 
over its invariants under a finite group action. 

We extend Skryabin's result to the case when $A$ is
a noncommutative PI algebra over an algebraically closed field $k$, 
under some assumptions. Namely, let $Z\subset A$ be a central subalgebra 
which is a domain, such that $A$ is a finitely
generated $Z$-module. We show in Theorem 3.1 that if 
(I) $Q\otimes_Z A$ is a
central simple algebra over $Q$ (where $Q$ is the quotient field of $Z$), 
and (II) $A$ is a projective $Z$-module, 
or $A$ is a torsion free $Z$-module and $Z$ is integrally closed in
$Q$, then $Z$ and hence $A$ is integral over $Z\cap A^K$. 
In particular, we show that this property holds 
for a coaction of $K$ on a prime PI algebra $A$ 
whose center $Z$ is an integrally closed finitely generated 
domain over $k$.
\footnote{In the case when $K$ is basic (i.e., 
$K^*$ is pointed), similar (but not equivalent) 
results were obtained by A. Totok in
\cite{T}, Theorem 2.5. Also, some results about integrality and finiteness 
of noncommutative module algebras over invariants are obtained in \cite{E}.} 
Further, we show that 
the degree of $Q$ over $Q^K$ (the field of quoteints of
$Z\cap A^K$) divides the dimension of $K$ (Proposition
\ref{divisi}).   

We also generalize these results to the case when $Z$ is not necessarily a domain. 
Namely, we show in Theorem 
\ref{maint1} that the same integrality result holds 
if $Z$ is a reduced algebra, and $Q\otimes_Z A$ 
is a semisimple algebra with center $Q$. Finally, in Proposition \ref{divisi2}
we generalize the divisibility result of Proposition \ref{divisi}. 

Skryabin's proof is based on the freeness theorem 
for finite dimensional Hopf algebras over coideal subalgebras, 
which he uses to show that $A$ has invariant characteristic
polynomials. Our proof is ideologically similar to Skryabin's 
proof. Namely, we show that the coefficients of 
the minimal polynomial and the 
characteristic polynomial
of the operator of right multiplication by $z\in Z$
on $Q\otimes_Z A$ belong to $Z$ and are $K$-invariant. 
However, in the noncommutative 
setting the coaction does not give rise to coideal subalgebras, 
so we use a
different method, based on minimal polynomials.
\footnote{We expect, however, that by using the results of 
\cite{S2} on projectivity and freeness over comodule algebras, 
one can obtain different proofs of our results, which 
are more direct generalizations of the arguments of \cite{S1}.}

Namely, we develop a theory of {\it Galois  
bimodules} over fields and, more
generally, over central simple algebras and 
semisimple algebras finite over the center, 
which may be of independent interest. 
For an algebra $A$, a Galois $A$-bimodule of rank $d$ 
is an $A$-bimodule $P$ which is free of rank $d$ on each side,
and satisfies the equation $P\otimes_A P\cong P^d$. 
We prove a classification theorem for Galois bimodules over a
field $L$ (Theorem \ref{classif}) 
which says that such a bimodule is simply a multiple of
$L\otimes_F L$, where $F$ is the center of $P$, i.e. the largest
subalgebra of $L$ over which $P$ is linear. Similarly, we show that
any Galois bimodule over a central simple algebra $B$ over a
field $L$ is a rational multiple of $B\otimes_F B$ (Proposition \ref{censim1}).
The same statement holds more generally (for connected Galois bimodules) 
if $B$ is a semisimple algebra which is finite 
over its center (Proposition \ref{censim2}). 
\footnote{We also introduce the notion of a weakly Galois
  bimodule, replacing the condition that $P\otimes_A P=P^d$ by 
a weaker condition that $P\otimes_A P$ is contained in $P^N$ for
some $N$, and show that much of the theory of Galois bimodules in
fact goes through for weakly Galois bimodules. Finally, we 
study quasi-Galois bimodules over commutative semisimple algebras $L$, 
i.e. bimodules $P$ finite on both sides and such that $P\otimes_L P\cong P^d$, and provide their classification (Theorem \ref{commsem}).}  

The theory of Galois bimodules is applied 
to the problem of invariants in the following way. We show 
that the Hopf coaction of $K$ on $A$ makes the tensor product
\footnote{Throughout the paper, $\otimes$ without subscript will denote the tensor product over the ground field $k$.} 
$Q\otimes_Z A\otimes K$ into a Galois bimodule over the algebra $Q\otimes_Z A$. 
This allows us to establish
the invariance of the characteristic polynomials 
of the right action of $z\in Z$ on $Q\otimes_Z P$ and 
thus obtain the desired integrality result (Theorem \ref{maint}, Theorem \ref{maint1}). 
\footnote{When we pass from $A$ to the associated Galois bimodule, 
we forget almost everything about the coaction, 
but remember just enough to study the invariants of the coaction 
in the center of $A$.} 

We also relate Galois bimodules to  
tensor subcategories of the tensor category 
of bimodules over a field 
studied in \cite{FO} and of the
tensor category of bimodules over a central simple algebra, 
and explain that these subcategories 
are twisted forms of categories 
of bimodules over semisimple algebras in the sense of 
\cite{EG}. 

Finally, our approach motivates the definition of a new invariant 
of a Hopf coaction on a central simple algebra 
- its Galois group. It would be interesting to
study this invariant further. 

The paper is organized as follows. 
In Section 2 we develop the theory of Galois and weakly Galois bimodules 
over fields and central simple algebras, and 
obtain the classification of Galois bimodules. 
In Section 3, we apply the theory of Section 2 
to prove the integrality theorem (Theorem \ref{maint})
in the case when $Z$ is a domain, and 
discuss various consequences and examples. 
In Section 4 we develop the theory of Galois bimodules over 
semisimple algebras finite over the center, and obtain their 
classification. In Section 5 we apply the results of Section 4 to obtain integrality 
(Theorem \ref{maint1}) in the case when $Z$ is a reduced algebra.
Finally, in Section 6 we discuss the connection of the theory of Galois bimodules for fields 
with tensor subcategories of the category of bimodules over fields discussed in \cite{FO}. 

{\bf Acknowledgements.} 
This work was partially supported
by the NSF grant DMS-1000113. The 
author is very grateful to Chelsea Walton
for introducing him to this field,
for many useful discussions 
and ideas that inspired this work,
and for helpful comments on this paper. 

\section{Galois bimodules}

\subsection{Bimodules} 

Let $P$ be a bimodule over a unital ring $A$. 
Then we have a homomorphism  
$\phi_P: A^{op}\to {\rm End}_A(P)$ 
induced by the right action of $A$. 
We will denote $\phi_P$
just by $\phi$, when no confusion is possible. 

Assume from now on that $P$ is free of
rank $d\ge 1$ as a left module. 
Then, fixing an identification 
$P\cong A^d$ as left modules, 
we can view $\phi_P$ as a homomorphism 
$A\to {\rm Mat}_d(A)$. 

\begin{definition} 
We say that $P$  
is {\it linear} over a central subring $k\subset A$ if the left and right action 
of $k$ coincide on $P$. The {\it center} of $P$ is the set ${\mathcal Z}(P)$ 
of $z\in A$ such that $zx=xz$ for all $x\in P$, i.e. 
such that $\phi_P(z)=z\cdot {\rm Id}$.  
\end{definition}

\begin{remark} 1. An 
$A$-bimodule linear over $k$ is the same thing as a left module 
over $A\otimes_k A^{\rm op}$. 

2. ${\mathcal Z}(P)\subset A$ 
is the maximal central subring 
of $A$ over which $P$ is linear. 
Indeed, if $z\in {\mathcal Z}(P)$ 
then for any $x\in P$, $a\in A$,
$(az)x=a(zx)=a(xz)=(ax)z=z(ax)=(za)x$, hence 
$za=az$ since $P$ is free over $A$. Thus $z$ belongs to the
center of $A$.  
\end{remark}

Let $P$ be a bimodule over a field $L$ 
which has dimension $d$ as a left and right $L$-vector
space. 

\begin{definition} We say that $P$ is {\it split} if for each $a\in L$, 
the eigenvalues of $\phi_P(a)$ 
are contained in $L$. In this case, we say that $P$ is {\it split
separable} if in addition the matrices $\phi_P(a)$ are
diagonalizable for all $a$.
\end{definition} 

Given an algebra $A$ and an automorphism $g\in {\rm Aut}(A)$, 
let $Ag$ denote the $A$-bimodule which is $A$ as a left $A$-module,
while the right $A$-action is multiplication twisted by $g$:
$a\circ x\circ b=axg(b)$ for $a,x,b\in A$.

\begin{lemma}\label{fil} An $L$-bimodule $P$ is split if and only if it 
has a finite filtration whose successive quotients are of the form
$Lg$, where $g\in {\rm Aut}(L)$. Moreover, a split $L$-bimodule
$P$ is separable if and only if 
it is semisimple as a bimodule. 
\end{lemma}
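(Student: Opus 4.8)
The plan is to prove Lemma \ref{fil} by analyzing the homomorphism $\phi_P : L \to \operatorname{End}_L(P) \cong \operatorname{Mat}_d(L)$, where we regard $P \cong L^d$ as a left $L$-vector space. The first observation is that since $L$ is a field, $\phi_P$ makes $P$ into a module over the algebra $L \otimes_k L$ (taking $k$ to be the prime field, or any field over which $L$ is algebraic—more carefully, we should work over whatever subfield makes the tensor product sensible; but the cleanest route avoids $L \otimes_k L$ altogether and argues directly). So instead I would argue directly with the single operator point of view. For $a \in L$, let $T_a = \phi_P(a) \in \operatorname{Mat}_d(L)$; these commute with each other and with the left $L$-action. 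Splitness says each $T_a$ has all eigenvalues in $L$.

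First I would prove the "if" direction, which is easy: if $P$ has a filtration $0 = P_0 \subset P_1 \subset \cdots \subset P_n = P$ by sub-bimodules with $P_i/P_{i-1} \cong Lg_i$ for automorphisms $g_i \in \operatorname{Aut}(L)$, then in a compatible left-$L$-basis the matrix $\phi_P(a)$ is block upper triangular with diagonal blocks the scalars $g_i(a)$; hence its eigenvalues are $\{g_i(a)\} \subset L$, so $P$ is split. For the "only if" direction, I would induct on $d$. Given a split $P$, pick any $a \in L$ and an eigenvalue $\lambda \in L$ of $T_a = \phi_P(a)$; set $P' = \{x \in P : xa = \lambda x\}$, a nonzero left-$L$-subspace. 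The key point is that $P'$ is in fact a sub-bimodule: for any $b \in L$, $T_a$ and $T_b$ commute, so $T_b$ preserves $\ker(T_a - \lambda)$, i.e. $xb \in P'$ whenever $x \in P'$; and $P'$ is obviously a left submodule. Now I restrict the right action of $L$ to $P'$. The subtle step: on $P'$, the element $a$ acts on the right by the scalar $\lambda$. If $P' \ne P$ we can quotient and induct; but to extract a quotient of the form $Lg$ we need to eventually reach a one-dimensional bimodule, and then identify which automorphism appears.

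To finish the induction I would take $P'$ to be a minimal nonzero sub-bimodule (exists since $\dim_L P < \infty$), and show $\dim_L P' = 1$ and $P' \cong Lg$ for some $g \in \operatorname{Aut}(L)$. Indeed, on $P'$ every $T_b$ has an eigenvalue in $L$ (splitness is inherited by sub-bimodules, since the char. poly of $T_b|_{P'}$ divides that of $T_b$), so by the eigenspace argument above applied inside $P'$, minimality forces the $\lambda$-eigenspace of each $T_b|_{P'}$ to be all of $P'$; thus each $b$ acts on the right of $P'$ by a scalar $g(b) \in L$. The map $b \mapsto g(b)$ is a ring homomorphism $L \to L$ (from associativity and linearity of the right action and the fact that $1$ acts as $1$), hence injective, and since $\dim_L P' = \dim_L P'$ on each side must match ($P'$ is also finite-dimensional as a right $L$-space because it is a bimodule over a field, of the same dimension), $g$ is surjective, so $g \in \operatorname{Aut}(L)$ and $P' \cong Lg$. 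Wait—I should double check $\dim_L P' = 1$: once each $b$ acts by the scalar $g(b)$ on the right, every left-$L$-subspace of $P'$ is a sub-bimodule, so minimality gives $\dim_L P' = 1$. Then $P/P'$ is again split (its char. polys divide those on $P$) of smaller dimension, and induction completes the filtration.

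For the second statement, I would again induct, now tracking diagonalizability. If $P$ is split separable, so each $T_a$ is diagonalizable, then the common eigenspace decomposition of the commuting family $\{T_a\}_{a \in L}$ splits $P$ as a direct sum of sub-bimodules, on each of which the right $L$-action is by a character $g$; hence $P \cong \bigoplus_i L g_i$ is semisimple. Conversely, if $P = \bigoplus_i L g_i$ is semisimple split, then in the corresponding basis each $T_a$ is diagonal with entries $g_i(a)$, hence diagonalizable, so $P$ is separable; and if a split $P$ is semisimple, decompose it into simple sub-bimodules, each of which by the argument above is of the form $Lg$ (the simple split $L$-bimodules are exactly the $Lg$), giving the diagonal form. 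The main obstacle, and the only place that really needs care, is the minimality/eigenspace argument establishing that a \emph{simple} split $L$-bimodule is exactly $Lg$ for some $g \in \operatorname{Aut}(L)$—specifically verifying that common eigenspaces of the right action are automatically sub-bimodules (which follows from commutativity of $L$ and the fact that all the $T_b$ commute) and that the resulting right-action character is a genuine automorphism rather than merely an endomorphism (which uses finite-dimensionality to upgrade injectivity to bijectivity).
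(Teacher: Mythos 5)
Your overall strategy (eigenspaces of the commuting family $\{\phi_P(b)\}$, filtration with one-dimensional quotients, then the separable case via simultaneous diagonalization) is essentially the paper's proof, but there is a genuine gap at the one step you yourself flagged: upgrading the right-action character $g$ on a minimal sub-bimodule $P'$ to an automorphism. Your justification is that $P'$ is ``finite-dimensional as a right $L$-space \dots of the same dimension'' as its left dimension. That is not true for sub-bimodules in general: for an embedding $g\colon L\to L$ with $1<[L:g(L)]<\infty$, the bimodule $Lg$ has left dimension $1$ but right dimension $[L:g(L)]$, so left and right dimensions of a sub-bimodule need not agree, and finiteness of the right dimension of $P'$ only gives $[L:g(L)]\le d$, not surjectivity. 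Nothing in your local argument rules out $P'\cong Lg$ with $g$ a proper embedding, and once surjectivity is in doubt your induction also loses its hypothesis that the quotient $P/P'$ again has equal left and right dimensions (its right dimension would be $d-[L:g(L)]$).

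The repair is the global counting the paper uses: first run your induction (or a simultaneous upper-triangularization, as in the paper) to get a filtration of length $d$ whose quotients are $Lg_i$ with $g_i\colon L\to L$ merely field embeddings; then note that the right dimension of $P$ equals $\sum_{i=1}^{d}[L:g_i(L)]\ge d$, and since by hypothesis it equals $d$, every $[L:g_i(L)]=1$, i.e.\ every $g_i\in{\rm Aut}(L)$. Surjectivity cannot be extracted from a single sub-bimodule in isolation; it comes from comparing the total right dimension of $P$ with the number of steps in the filtration. With that modification (and the same fix quoted in your semisimple case, where you assert that simple split sub-bimodules are $Lg$ with $g$ an automorphism), the rest of your argument goes through and coincides with the paper's.
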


\begin{proof} Assume $P$ is split. 
Then there exists a basis $v_1,...,v_d$ of $P$ as a left vector space 
in which $\phi_P(a)$ are upper triangular for all $a\in L$. 
Let ${\bold F}_iP$ be the span of $v_1,...,v_i$. 
Then ${\bold F}_\bullet$ is a bimodule filtration,
and ${\bold F}_{i+1}P/{\bold F}_iP$ is $L$ with 
the usual left action of $L$ and right action given by some field embedding 
$g_i: L\to L$. Since $P$ has dimension $d$ as a right vector space, these embeddings are all 
isomorphisms. Thus, ${\bold F}_{i+1}P/{\bold F}_iP=Lg_i$ for some $g_i\in {\rm Aut}(L)$.
Conversely, it is clear that any bimodule having a filtration with successive quotients $Lg$ is necessarily split.

Now, if $P$ is separable, then the basis $\lbrace{ v_i\rbrace}$ can be chosen in such a way that 
the matrices $\phi_P(a)$ are diagonal, so we get that $P=\oplus_i Lg_i$, i.e. $P$ is semisimple. 
Conversely, if $P$ is semisimple then the filtration ${\bold F}_\bullet$ must split, which implies that 
$P$ is separable. The lemma is proved.    
\end{proof} 

\subsection{Galois and weakly Galois bimodules}  

\begin{definition} A {\it weakly Galois bimodule} over $A$ of rank $d\ge 1$ 
is an $A$-bimodule $P$ such that $P\cong A^d$ as a left and right $A$-module, 
and $P\otimes_A P$ is contained in $P^N$ as an $A$-bimodule for some $N$. 
If moreover $P\otimes_A P\cong P^d$ as an $A$-bimodule, then we call $P$ 
a {\it Galois bimodule} of rank $d$. 
\end{definition}

\begin{example}\label{exaa}
1. $A^d$ is a Galois $A$-bimodule of rank $d$. 

2. If every right-invertible element of $A$ is left-invertible, 
and if $P$ is a Galois $A$-bimodule of rank $1$ 
then $P=A$. Indeed, $\phi: A\to A$ is a homomorphism
which makes $A$ into a free right module of rank $1$ over itself. 
This means that $\phi$ is injective. Fix a generator $x\in A$ of this module. 
Then for any $y\in A$ there exists $a_y\in A$ such that 
$y=x\phi(a_y)$. In particular, $1=x\phi(a_1)$, hence 
$x$ is right invertible and thus invertible. 
So the equality $xy=x\phi(a_{xy})$ implies 
$y=\phi(a_{xy})$, i.e. $\phi$ is surjective, and hence an 
automorphism. Moreover, since $P$ is a Galois bimodule, 
$\phi^2=\phi$ modulo inner automorphisms, hence $\phi$ is inner and we can assume that 
$\phi={\rm Id}$, as claimed.  

3. If $A$ is a commutative algebra,
and $B$ is an $A$-algebra which is free of rank $d$ as an
$A$-module (e.g. if $B$ is a field extension of degree $d$ of a
field $A$) then $B\otimes_A B$ is a Galois bimodule over $B$ of
rank $d$. 

4. Let $G\subset {\rm Aut}(A)$ be a finite subgroup, let $n_g>0$
be integers, and let $P=\oplus_{g\in G}(Ag)^{n_g}$. 
Then $P$ is a weakly Galois $A$-bimodule of rank $d=\sum_{g\in G}n_g$.
Moreover, if all $n_g$ are equal, then $P$ is a Galois $A$-bimodule.   
\end{example}

\begin{lemma}\label{ext}
(i) Suppose that $B$ is an extension of $A$ 
such that $B$ is isomorphic to $A^n$ as an $A$-bimodule 
(for instance, if $A\subset B$ is a field extension of degree
$n$). In this case, if $P$ is a Galois (respectively, weakly Galois) bimodule 
of rank $d$ over $A$, then 
$B\otimes_A P\otimes_A B$ is a Galois (respectively, weakly Galois) 
bimodule of rank $dn$ over $B$. 

(ii) If $P$ is a Galois (respectively, weakly Galois)
bimodule over $A$ of rank $d$ linear over $k$, and 
$B$ is another $k$-algebra, then 
$P\otimes_k B$ is a Galois (respectively, weakly Galois)
bimodule over $A\otimes_k B$ 
of rank $d$. For instance, for $B={\rm Mat}_m(k)$, we have 
that $P\otimes_k B={\rm Mat}_m(P)$ is a (weakly) Galois bimodule over $B$.  

(iii) If $B$ is a central simple algebra with center $A$ 
and $\dim_A B=m^2$, then the restriction to $A$ 
of any Galois (respectively, weakly Galois) $B$-bimodule $P$ of rank $d$ 
is a Galois (respectively, weakly Galois) bimodule of rank $dm^2$. 
\end{lemma}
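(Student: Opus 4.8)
The plan is to check, in each of the three cases, the two defining conditions of a (weakly) Galois bimodule: that the proposed bimodule is free of the asserted rank on each side, and that its tensor square over the relevant ground ring behaves as prescribed. The rank statements will be routine, reducing to the identities $C \otimes_A A^r \cong C^r$ combined with the one-sided freeness of $B$ over $A$ (part (i)), of $P$ over $A$ (part (ii)), or of $B$ as an $A$-vector space of dimension $m^2$ (part (iii)). The content lies in the tensor-square identity, which in each case I would obtain by collapsing the middle factor of the appropriate iterated tensor product and then inserting the hypothesis on $P$, namely $P \otimes_A P \cong P^d$ in the Galois case and $P \otimes_A P \hookrightarrow P^N$ in the weakly Galois case; in the latter one also invokes exactness of the ambient tensoring, which holds because $B$ is free, hence flat, over $A$ on each side in (i), and because $k$ is a field in (ii). For part (i), concretely, $B \cong A^n$ as an $A$-bimodule gives $P \otimes_A B \cong P^n \cong A^{dn}$ as a left $A$-module, hence $B \otimes_A P \otimes_A B \cong B^{dn}$ as a left $B$-module, and symmetrically on the right; and
\[
(B \otimes_A P \otimes_A B) \otimes_B (B \otimes_A P \otimes_A B) \cong B \otimes_A P \otimes_A B \otimes_A P \otimes_A B ,
\]
where rewriting the middle $B$ as $A^n$ yields $(B \otimes_A (P \otimes_A P) \otimes_A B)^n$, into which substituting $P \otimes_A P \cong P^d$ (resp. the embedding $P \otimes_A P \hookrightarrow P^N$) lands in $(B \otimes_A P \otimes_A B)^{dn}$ (resp. inside $(B \otimes_A P \otimes_A B)^{Nn}$), which is the rank-$dn$ condition.

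For part (ii), linearity of $P$ over $k$ is exactly what makes $P \otimes_k B$ a well-defined $A \otimes_k B$-bimodule. From $P \cong A^d$ as a left $A$-module one gets $P \otimes_k B \cong (A \otimes_k B)^d$ as a left $A \otimes_k B$-module, and symmetrically on the right. For the tensor square I would use the base-change isomorphism $(P \otimes_k B) \otimes_{A \otimes_k B} (P \otimes_k B) \cong (P \otimes_A P) \otimes_k B$, which follows from $P \otimes_k B \cong P \otimes_A (A \otimes_k B)$ as a right $A \otimes_k B$-module, and then substitute $P \otimes_A P \cong P^d$ (resp. $P \otimes_A P \hookrightarrow P^N$, which stays an embedding after $(-) \otimes_k B$ since $k$ is a field). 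The displayed special case then reads: for $B = {\rm Mat}_m(k)$ the bimodule ${\rm Mat}_m(P) = P \otimes_k {\rm Mat}_m(k)$ is a (weakly) Galois bimodule of rank $d$ over ${\rm Mat}_m(A) = A \otimes_k {\rm Mat}_m(k)$. Parts (i) and (ii) should be entirely formal.

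Part (iii) is the one that needs care, because restriction from $B$-bimodules to $A$-bimodules does not commute with the tensor product: $P \otimes_B P$ must be compared with the larger $P \otimes_A P$, and this comparison uses the central-simple structure of $B$ over $A$, not merely $\dim_A B = m^2$. The key step I would establish first is the isomorphism $B \otimes_A B \cong B^{m^2}$ of $B$-bimodules, where $B \otimes_A B$ carries the outer bimodule structure $b_1 \cdot (x \otimes y) \cdot b_2 = b_1 x \otimes y b_2$. For this, observe that $B \otimes_A B$ is linear over $A$ — for $a \in A$, $a x \otimes y = x a \otimes y = x \otimes a y = x \otimes y a$, using that $a$ is central in $B$ — so it is a module over $B \otimes_A B^{\rm op} \cong {\rm End}_A(B) \cong {\rm Mat}_{m^2}(A)$, which is semisimple with unique simple module $B$, of $A$-dimension $m^2$; since $\dim_A(B \otimes_A B) = m^4$, this forces $B \otimes_A B \cong B^{m^2}$. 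From this I deduce, for arbitrary $B$-bimodules $M, N$, the isomorphism of $A$-bimodules
\[
M \otimes_A N \cong M \otimes_B (B \otimes_A B) \otimes_B N \cong M \otimes_B B^{m^2} \otimes_B N \cong (M \otimes_B N)^{m^2} ,
\]
the first step being the elementary identities $M \otimes_B (B \otimes_A B) \cong M \otimes_A B$ and $(M \otimes_A B) \otimes_B N \cong M \otimes_A N$. Taking $M = N = P$: the restriction to $A$ of a Galois $B$-bimodule $P$ of rank $d$ then satisfies $P \otimes_A P \cong (P \otimes_B P)^{m^2} \cong P^{dm^2}$ as $A$-bimodules, while $P \cong A^{dm^2}$ on each side since $B \cong A^{m^2}$ as a one-sided $A$-vector space; this is the rank-$dm^2$ Galois condition, and the weakly Galois case is identical, with $P \otimes_B P \hookrightarrow P^N$ replacing $P \otimes_B P \cong P^d$ and landing in $P^{N m^2}$.

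I expect the main obstacle to be precisely this isomorphism $B \otimes_A B \cong B^{m^2}$ underlying part (iii): it rests on the central-simple input $B \otimes_A B^{\rm op} \cong {\rm Mat}_{m^2}(A)$ together with the observation that $B \otimes_A B$ is linear over $A$ (where $A$ being central in $B$ is used). Once that is in hand, the identity $M \otimes_A N \cong (M \otimes_B N)^{m^2}$, and with it part (iii), follows by bookkeeping with associativity of tensor products, exactly as in parts (i) and (ii).
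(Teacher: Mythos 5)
Your proposal is correct and follows essentially the same route as the paper: collapsing the middle copy of $B\cong A^n$ in (i), the base-change isomorphism $(P\otimes_k B)\otimes_{A\otimes_k B}(P\otimes_k B)\cong (P\otimes_A P)\otimes_k B$ in (ii), and in (iii) the identification $B\otimes_A B\cong B^{m^2}$ via $B\otimes_A B^{\rm op}\cong {\rm End}_A(B)$, giving $P\otimes_A P\cong (P\otimes_B P)^{m^2}$. The extra details you supply (the dimension count over $A$ and the explicit rank bookkeeping) are consistent with, and merely elaborate, the paper's argument.
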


\begin{proof}
(i) Let $P$ be a Galois (respectively, weakly Galois) $A$-bimodule of rank $d$. 
Then it is clear that under the assumptions of (i), 
$B\otimes_A P\otimes_A B$ is free as a left and right $B$-module of rank $dn$.
Also 
$$
B\otimes_A P\otimes_A B\otimes_B B\otimes_A P\otimes_A B
=(B\otimes_A P\otimes_B P\otimes_A B)^n,
$$
which equals 
$(B\otimes_A P\otimes_A B)^{dn}$ (respectively, is contained 
in $(B\otimes_A P\otimes_A B)^{Nn}$ for some $N$),   
as desired. 

(ii) This follows from the fact that 
$(P\otimes_k B)\otimes_{A\otimes_k B}(P\otimes_k B)=
(P\otimes_A P)_{\otimes_k} B$. 

(iii) We have $B\otimes_A B^{op}\cong {\rm End}_A(B)$, so 
any $B$-bimodule linear over $A$ is a multiple of $B$.
In particular, $B\otimes_A B=B^{m^2}$ as a $B$-bimodule. 
Thus, $P\otimes_A P=P\otimes_B B\otimes_A B\otimes_B P=
(P\otimes_B P)^{m^2}$, which implies the statement.  
\end{proof}

\subsection{Galois bimodules coming from Hopf coactions}

Let $K$ be a finite dimensional Hopf algebra over an
algebraically closed field $k$ with coproduct $\Delta$, counit $\varepsilon$, 
and antipode $S$. Let $A$ be a (unital) $k$-algebra. 
Recall (\cite{M}) that $A$ is called 
a {\it right $K$-comodule algebra} if it is equipped with a 
coaction map $\rho: A\to A\otimes K$ 
which is an algebra homomorphism, and such that 
it equips $A$ with a structure of a $K$-comodule, i.e., 
$$
(\rho\otimes 1)(\rho(a))=(1\otimes \Delta)(\rho(a)),\
(1\otimes \varepsilon)(\rho(a))=a,\ a\in A.
$$

Now suppose that $A$ is an algebra 
with a coaction of a finite dimensional Hopf algebra $K$
of dimension $d$. So we have a coaction map 
$\rho: A\to A\otimes K$. 
This map equips the tensor product $P:=A\otimes K$ with a 
structure of an $A$-bimodule,
via $a\circ x\circ b:=(a\otimes 1)x\rho(b)$, $a,b\in A$, $x\in P$. 

\begin{proposition}\label{coa}
$P$ is a Galois bimodule over $A$ of rank $d$.
Moreover, one has ${\mathcal Z}(P)={\mathcal Z}(A)^K$, where
${\mathcal Z}(A)^K:={\mathcal Z}(A)\cap A^K$ is the subalgebra of
central invariants of $A$. 
\end{proposition}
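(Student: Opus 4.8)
The plan is to verify the three required properties in turn: that $P=A\otimes K$ is free of rank $d$ as a left $A$-module, that it is free of rank $d$ as a right $A$-module, and that $P\otimes_A P\cong P^d$ as an $A$-bimodule; then to identify the center.

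First, freeness on the left is immediate: since the left action is $a\circ(a'\otimes \kappa)=(aa')\otimes\kappa$, choosing a $k$-basis $\kappa_1,\dots,\kappa_d$ of $K$ gives $P=\bigoplus_i A\otimes\kappa_i\cong A^d$ as a left $A$-module. Freeness on the right is the point where one must actually use the Hopf algebra structure. Here I would exhibit an explicit $A$-bimodule isomorphism $\theta\colon A\otimes K\to A\otimes K$ intertwining the ``twisted'' right action (through $\rho$) with the ``trivial'' right action $(a'\otimes\kappa)\circ b=(a'b)\otimes\kappa$. The natural candidate, using Sweedler notation $\rho(a)=a_{(0)}\otimes a_{(1)}$, is $\theta(a\otimes \kappa)=a_{(0)}\otimes \kappa S^{-1}(a_{(1)})$ (or the analogous map built from $S$), whose inverse uses the coaction axioms and the antipode identities; one checks directly that $\theta$ is left $A$-linear and carries $\rho(b)$-multiplication to ordinary right multiplication by $b$. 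Since $A\otimes K$ with the trivial right $A$-action is visibly $\cong A^d$ as a right module, this gives right-freeness of rank $d$. (This is the standard ``fundamental theorem''-type computation for Hopf–Galois data, and it is the main obstacle in the sense that it is the only step requiring a genuine Hopf-algebraic identity rather than bookkeeping.)

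Next, for the Galois equation I would compute $P\otimes_A P$ explicitly. Using the left-module identification $P=A\otimes K$, the tensor $P\otimes_A P$ is spanned over the two-sided structure by elements $(a\otimes\kappa)\otimes_A(1\otimes\lambda)$, and the inner $A$-balancing lets us push the middle $A$ through the right action of the first factor, i.e. through $\rho$. Concretely, $(a\otimes\kappa)\otimes_A(1\otimes\lambda) = (a\otimes 1)\circ\big((1\otimes\kappa)\otimes_A(1\otimes\lambda)\big)$, and $(1\otimes\kappa)\otimes_A(1\otimes\lambda)$ can be rewritten using that for $b\in A$, $(1\otimes\kappa)\circ b\otimes_A(1\otimes\lambda)=(1\otimes\kappa)\otimes_A b\circ(1\otimes\lambda)$, i.e. $\rho(b)(1\otimes\kappa)\otimes_A(1\otimes\lambda) = (1\otimes\kappa)\otimes_A (b\otimes 1)(1\otimes\lambda)$. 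The upshot, after using the comodule axiom $(\rho\otimes 1)\rho=(1\otimes\Delta)\rho$ together with the right-freeness isomorphism $\theta$ above, is a bimodule isomorphism $P\otimes_A P\cong A\otimes K\otimes K$ where the left $A$-action is on the first factor and the right action is $\rho$ followed by $\Delta$ on the last factor. Since $K\otimes K\cong K^d$ as right $K$-comodules (with $K$ coacting by $\Delta$ on the second tensor factor — this is again a standard fact, the ``$K$ is free over the trivial comodule'' statement, or simply $\Delta$-coaction on $K\otimes K$ being isomorphic to a sum of $d$ copies of the regular coaction on $K$), we get $P\otimes_A P\cong P^d$ as $A$-bimodules, as required. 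This simultaneously makes $P$ a Galois bimodule of rank $d$.

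Finally, for the center: by definition $z\in\mathcal Z(P)$ iff $z\circ x = x\circ z$ for all $x\in P$, i.e. $(z\otimes 1)x = x\rho(z)$ for all $x\in A\otimes K$. Taking $x=1\otimes\kappa$ for $\kappa$ running over a basis of $K$ forces $z\otimes\kappa = \rho(z)(1\otimes\kappa)$; pairing the $K$-components against $K^*$ and using that these hold for all $\kappa$ shows $\rho(z)=z\otimes 1$, which is exactly the condition $z\in A^K$. Conversely if $z\in A^K$ then $\rho(z)=z\otimes1$ is central in $A\otimes K$ only after we also know $z$ is central in $A$; so one checks: $z\in\mathcal Z(P)$ forces, by Remark after the definition of $\mathcal Z$, that $z\in\mathcal Z(A)$, and combined with $\rho(z)=z\otimes1$ this gives $z\in\mathcal Z(A)\cap A^K=\mathcal Z(A)^K$. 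Conversely, for $z\in\mathcal Z(A)^K$ and any $x=a\otimes\kappa$, we have $z\circ x=(za)\otimes\kappa=(az)\otimes\kappa=(a\otimes1)(z\otimes\kappa)=(a\otimes1)(1\otimes\kappa)\rho(z)=x\circ z$, using centrality of $z$ in $A$ and $\rho(z)=z\otimes1$. Hence $\mathcal Z(P)=\mathcal Z(A)^K$, completing the proof.
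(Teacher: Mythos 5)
Your proposal is correct and takes essentially the same route as the paper: your map $a\otimes\kappa\mapsto a_{(0)}\otimes\kappa S^{-1}(a_{(1)})$ is exactly the paper's inverse $\xi$ of $\psi(a\otimes y)=(1\otimes y)\rho(a)$ proving right freeness, your identification of $P\otimes_A P$ with $A\otimes K\otimes K$ carrying the right action by $(1\otimes\Delta)(\rho(b))$ is the paper's isomorphism $\tau$, and the center computation is the routine check the paper omits. The only quibble is terminological: the final step is cleanest as a statement about right $K$-modules (namely that $K\otimes K$ with right multiplication by $\Delta(h)$ is free of rank $d$, via bijectivity of $x\otimes y\mapsto (x\otimes 1)\Delta(y)$), which is what the paper invokes, rather than about comodules, though the two are equivalent avatars of the same standard fact.
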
 

\begin{proof}
This is well known, but we will give a proof for reader's convenience. 

It is clear that $P\cong A^d$ as a left $A$-module. 
Let us show that this is also the case as a right module. 
To this end, define the linear map 
$\psi: A\otimes K\to A\otimes K$ by $\psi(a\otimes y)=(1\otimes y)\rho(a)$. 
Then 
$$
\psi(a_1\otimes y)\rho(a_2)=(1\otimes y)\rho(a_1)\rho(a_2)=(1\otimes y)\rho(a_1a_2)=\psi(a_1a_2\otimes y),
$$
which shows that $\psi$ is a homomorphism of right modules, where the target module is just the tensor product of $A$ 
with the vector space $K$. So it is enough to show that $\psi$ is an isomorphism. To do so, define another map 
$\xi: A\otimes K\to A\otimes K$ by 
$$
\xi(a\otimes y)=(1\otimes y)(1\otimes S^{-1})(\rho(a)).
$$
Then it is easy to check that $\xi$ is the inverse of $\psi$, which implies that $\psi$ is an isomorphism. 

Finally, let us show that $P\otimes_A P$ is isomorphic to $P^d$. Actually, the coaction gives rise to an explicit isomorphism. 
Consider the linear map $\tau: P\otimes_A P\to P\otimes K=A\otimes K\otimes K$ defined by the formula 
$$
\tau(a_1\otimes y_1\otimes a_2\otimes y_2):=(a_1\otimes y_1)\rho(a_2)\otimes y_2.
$$
It is easy to see that this map is well defined, is an isomorphism, and 
commutes with the left action of $A$ in the first component. 
Identifying $P\otimes_A P$ with $A\otimes K\otimes K$ using $\tau$, 
we can transport the $A$-bimodule structure of $P\otimes_A P$ to $A\otimes K\otimes K$,
and the transported structure is given by 
$$
a\circ x\circ b=(ax\otimes 1\otimes 1)(1\otimes \Delta)(\rho(b)),\ x\in A\otimes K\otimes K, a,b\in A.
$$
So it remains to note that $K\otimes K$ is isomorphic to $K\otimes K_{\rm vect}$
as a right $K$-module, where $K_{\rm vect}$ is $K$ with the trivial action of $K$. 

The last statement follows directly from the definitions. 
\end{proof}

\subsection{Basic facts from Galois theory}

Recall the basic setup of Galois theory (\cite{L}). 
Let $L\supset F$ be a normal algebraic field extension,
i.e. $L$ is the splitting field over $F$ 
of a family of polynomials. Let $F_{\rm perf}$ be the perfect
closure of $F$ in $L$, i.e. the field of elements whose 
$p^n$-th power is in $F$ for some $n$ if ${\rm char}F=p$ (in characteristic zero, by definition 
$F_{\rm perf}=F$). Let $L_{\rm sep}$ be the 
field of separable elements in $L$ over $F$, i.e. those whose
minimal polynomials are separable (have a nonzero derivative).  
Then $L$ is a separable extension of $F_{\rm perf}$
(see \cite{L}, Chapter 5, Proposition 6.11), 
and a purely inseparable extension of $L_{\rm sep}$, 
and $[L:F_{\rm perf}]=[L_{\rm sep}:F]$. 
Moreover, this number (finite or infinite) 
equals the order of the Galois group 
${\rm Gal}(L/F)$, which is the group of 
automorphisms of $L$ which acts trivially on $F$. 
One has $L^G=F_{\rm perf}$, $GL_{\rm sep}=L_{\rm sep}$, and 
$L_{\rm sep}^G=F$. 

\subsection{Galois and weakly Galois bimodules over fields}

\begin{proposition}\label{dimeq}
Let $P$ be a bimodule over a field $L$ which 
is finite dimensional on both sides and such that $P\otimes_L P$
is contained in a multiple of $P$. Then the right and left 
dimensions of $P$ coincide, so that $P$ is a weakly Galois
$L$-bimodule. 
\end{proposition}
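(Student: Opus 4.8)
Write $a=\dim_L P$ (left dimension) and $b$ for the right dimension; I want to show $a=b$. The hypothesis is that $P\otimes_L P$ embeds as an $L$-bimodule into $P^N$ for some $N$. The key observation is that dimensions on each side are additive in short exact sequences of bimodules and multiplicative under $\otimes_L$: if $P$ has left dimension $a$ and right dimension $b$, then $P\otimes_L P$ has left dimension $a^2$ and right dimension $b^2$ (since $P\otimes_L P$ is, on the left, a sum of $b$ copies of $P$ as a left module, using a right $L$-basis of the first factor; similarly on the right). The embedding into $P^N$ forces $a^2 \le Na$ and $b^2 \le Nb$, so $a,b\le N$, but this alone does not pin down $a=b$, so I need a sharper argument.

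**The sharper argument.** I would pass to a filtration. By the left-module structure alone, $P$ (being finite-dimensional on the left) has a bimodule composition series whose successive quotients are simple $L$-bimodules. A simple $L$-bimodule $S$ is a simple module over $L\otimes_k L^{\mathrm{op}}$, hence (since $L$ is a field) its left and right annihilators give $S$ the structure of an $(L_1,L_2)$-bimodule for subfields; in fact $S$ is finite-dimensional and $\mathrm{End}$ of it is a division algebra, and the left and right dimensions of a simple $L$-bimodule that is finite on both sides must be equal — this is essentially the statement that a simple $L\otimes_k L^{\mathrm{op}}$-module corresponds to a field embedding up to inseparability, cf. the discussion preceding Lemma~\ref{fil}. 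Grant that for a moment: then additivity of left and right dimension along the composition series gives $a=\sum_i (\text{left dim of }S_i)$ and $b=\sum_i(\text{right dim of }S_i)$, and if each simple quotient is "balanced" ($\text{left dim}=\text{right dim}$) we are done immediately — even without using the weakly Galois hypothesis at all.

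**Where the hypothesis is really needed.** The subtlety is that a simple $L$-bimodule need \emph{not} be balanced in general: e.g. over an imperfect field one can have $L$ as a left vector space of dimension $1$ but, twisted by a non-surjective embedding $g\colon L\hookrightarrow L$, sitting inside as a right module of larger dimension. So the bare composition-series argument fails, and this is exactly the point where $P\otimes_L P \hookrightarrow P^N$ must enter. The idea: if some composition factor $S=L g$ with $g$ an embedding that is not onto (right dimension $>1=$ left dimension), then iterating, $P^{\otimes m}$ acquires composition factors built from $g^m$, and the ratio (right dim)/(left dim) of $P^{\otimes m}$ grows like $([L:g(L)])^m \to \infty$, contradicting $P^{\otimes m}\hookrightarrow P^{N^{m-1}}$ (which bounds the total right dimension of $P^{\otimes m}$ by $N^{m-1}b$ while its left dimension is $a^m$). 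More carefully: let $r=b/a$ be the ratio of right to left dimension; left dimension of $P^{\otimes m}$ is $a^m$ and right dimension is $b^m=r^m a^m$; the embedding into $P^{N}$ at each tensor step gives right dimension of $P^{\otimes m}\le N^{m-1} b$, i.e. $r^m a^m \le N^{m-1} r a^m$, so $r^{m-1}\le (N/a)^{m-1}\cdot(\text{bounded})$... this only gives $r\le N/a$, still not $r=1$. To actually force $r=1$ I should instead compare both the estimate $a^2\le Na$ and $b^2\le Nb$ \emph{simultaneously with} the identity relating them: $P\otimes_L P$ has left dim $a\cdot b$ computed the "mixed" way (left dim of $P$ times right dim of $P$, using a right basis of the first factor and a left basis of the second) — wait, that gives left dim of $P\otimes_L P$ equal to $a\cdot b$, but also equal to $a^2$ by the pure-left computation; hence $a^2 = ab$ directly, so $a=b$.

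**The clean finish.** So the real proof is short: for any $L$-bimodule $P$ finite on both sides with left dim $a$, right dim $b$, compute $\dim_L^{\text{left}}(P\otimes_L P)$ two ways. Using a left $L$-basis of the first tensor factor, $P\otimes_L P \cong P^{\oplus a}$ as left modules (no — one must be careful: $P\otimes_L P$, the first $P$ viewed with its \emph{right} $L$-structure to form the tensor product; as a \emph{left} $L$-module $P\otimes_L P$ is $(\text{left }P)\otimes_L(\text{underlying left space of second }P)$...). Let me state it correctly: as a left $L$-vector space, $P\otimes_L P$ has dimension equal to (left dim of $P$) $\times$ (right dim of $P$) $= ab$, because tensoring over $L$ on the right of the first factor kills a right-$b$-dimensional space against a left-$a$-dimensional... the standard count gives $\dim^{\text{left}}(P\otimes_L P)= a\cdot b$ \emph{and} symmetrically $\dim^{\text{right}}(P\otimes_L P)=a\cdot b$. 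Hence $P\otimes_L P$ is \emph{balanced} regardless. Now the weakly Galois hypothesis: $P\otimes_L P\hookrightarrow P^N$, so $ab = \dim^{\text{left}}(P\otimes_L P)\le Na$ and $ab=\dim^{\text{right}}(P\otimes_L P)\le Nb$, giving $b\le N$ and $a\le N$ — still not enough. So I do need the composition-factor refinement after all. \textbf{Conclusion of the plan}: I will (1) reduce to showing every simple $L$-bimodule finite on both sides is balanced, using additivity; (2) classify such simples as twisted forms $L\otimes_{F}L$-modules / embeddings up to inseparable closure, and observe the balanced property follows because a simple $L$-bimodule that is finite-dimensional on both sides gives mutually inverse-ish embeddings forcing equal degrees (this is where I expect the main obstacle, handling imperfect fields); and (3) if step (2) genuinely allows unbalanced simples, rule them out via the growth argument on $P^{\otimes m}$ using $P\otimes_L P\subset P^N$ iteratively. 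I expect step (2)/(3) — controlling inseparability so that a weakly Galois bimodule cannot contain an unbalanced simple factor — to be the crux.
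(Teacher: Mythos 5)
Your proposal is a plan rather than a proof, and the step you yourself flag as the crux is exactly where it breaks down. The hope behind your steps (1)--(2) --- that every simple $L$-bimodule which is finite dimensional on both sides is balanced --- is false: for $L=k(t)$ and the embedding $g(t)=t^2$, the bimodule $Lg$ is simple with left dimension $1$ and right dimension $2$, so the composition-series argument cannot work by itself. And the version of the growth argument (3) that you actually execute uses only the total-dimension bound coming from $P^{\otimes m}\subset P^{N^{m-1}}$, namely $\dim^{\mathrm{right}}P^{\otimes m}\le N^{m-1}\,\dim^{\mathrm{right}}P$, which, as you correctly observe, only yields $a,b\le N$ and not $a=b$. (A side slip: in your ``clean finish'' the left dimension of $P\otimes_L P$ is $a^2$, not $ab$ --- choosing a left $L$-basis of the second factor gives $P\otimes_L P\cong P^{\oplus a}$ as left modules; your opening paragraph had the correct values $a^2$ and $b^2$, and the later second-guessing derailed the computation.)

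The missing idea, which is how the paper argues, is to use $P^{\otimes s}\subset P^{N^{s-1}}$ to constrain \emph{which simple composition factors can occur}, not just total dimensions. Since $P$ is free as a left and as a right $L$-module, tensoring over $L$ is exact in each variable, so every composition factor of $P^{\otimes s}$ is a composition factor of $P^{N^{s-1}}$, hence of $P$ itself. Let $r_-\le r_+$ be the minimal and maximal value of $(\text{right dim})/(\text{left dim})$ over the composition factors of $P$; the ratio of any bimodule of finite length lies between the minimal and maximal ratios of its composition factors (mediant inequality), so the ratio of $P^{\otimes s}$ lies in the fixed interval $[r_-,r_+]$ for every $s$. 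But that ratio equals $(b/a)^s$, which escapes any fixed interval as $s\to\infty$ unless $a=b$. This also completes your step (3) intuition in its localized form: if an unbalanced factor $Lg$ occurred in $P$, then $Lg^{\,m}=(Lg)^{\otimes m}$ would be a subquotient of $P^{\otimes m}\subset P^{N^{m-1}}$ with ratio $[L:g(L)]^m$, forcing composition factors of $P$ of arbitrarily large ratio --- a contradiction. Without this step your argument stops at $a,b\le N$.
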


\begin{proof}
Let $\dim_L P=d$, $\dim P_L=d'$. 
The bimodule $P$ has finite length (which is 
$\le {\rm min}(d,d')$). Let $\lbrace{R_m\rbrace}$ be the simple
composition factors of $P$, and let $r_-,r_+$ 
be the minimal and maximal ratio of the right dimension and the left 
dimension among the $R_m$. Since $P\otimes_L P$ is contained in
$P^N$, the bimodule $P^{\otimes s}$ is contained
in $P^{N^{s-1}}$, so the ratio 
$r_s$ of the right dimension to the left dimension of $P^{\otimes s}$ 
satisfies the inequalities $r_-\le r_s\le r_+$. 
But $r_s=(d'/d)^s$. Thus, $r_-\le (d'/d)^s\le r_+$. This implies that $d'=d$. 
\end{proof}

Let $P$ be a weakly Galois bimodule of rank $d$ over a field $L$.
Let $\phi=\phi_P: L\to {\rm Mat}_d(L)$ be the corresponding right action map. 
For each $a\in L$, let $\mu_a(t)=\sum_{i=0}^{d_a}c_i(a)t^i$ 
be the minimal polynomial 
of the matrix $\phi(a)$ over $L$ ($c_{d_a}=1$). 

\begin{proposition}\label{minpol} The coefficients 
of $\mu_a(t)$ belong to ${\mathcal Z}(P)$. 
\end{proposition}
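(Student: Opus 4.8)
The plan is to show directly that $\psi_i:=\phi_P(c_i(a))-c_i(a)\cdot{\rm Id}$ vanishes in ${\rm Mat}_d(L)$ for every $i$; by the definition of $\mathcal Z(P)$ this is exactly the statement that $c_i(a)\in\mathcal Z(P)$. Write $M:=\phi_P(a)\in{\rm Mat}_d(L)$ and $\mu_a(t)=\sum_{i=0}^{d_a}c_i(a)t^i$. By the definition of the minimal polynomial, $\sum_i c_i(a)M^i=0$ and the matrices ${\rm Id},M,\dots,M^{d_a-1}$ are linearly independent over $L$. Note that $\psi_{d_a}=\phi_P(1)-{\rm Id}=0$, and that each $\psi_i$, being the difference of $\phi_P(c_i(a))$ and a scalar matrix, commutes with the entire commutative subring $\phi_P(L)\subset{\rm Mat}_d(L)$.

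The one computational input is an explicit description of the right action of $a$ on $P\otimes_L P$. Identify $P$ with $L^d$ as a left module via a fixed basis $e_1,\dots,e_d$. Then $\{e_i\otimes e_j\}$ is a left $L$-basis of $P\otimes_L P$ (here one uses that $P$ is also free of rank $d$ on the right), and a direct check shows that right multiplication by $b\in L$ in this basis is, up to a fixed permutation of the index set independent of $b$, the $d^2\times d^2$ matrix obtained from $\phi_P(b)$ by replacing each entry $\phi_P(b)_{lk}\in L$ by the $d\times d$ block $\phi_P(\phi_P(b)_{lk})$. Equivalently, let $\widehat{\phi_P}:{\rm Mat}_d(L)\to{\rm Mat}_d({\rm Mat}_d(L))={\rm Mat}_{d^2}(L)$ be the unital ring homomorphism that applies $\phi_P$ to each matrix entry (note it is not $L$-linear); then $\phi_{P\otimes_L P}(a)$ is conjugate over $L$ to $\widehat{\phi_P}(M)$. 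Since $P$ is weakly Galois, $P\otimes_L P$ embeds into $P^N$ as a bimodule, so the pair $(P\otimes_L P,\phi_{P\otimes_L P}(a))$ is a subobject of $(P^N,\phi_P(a)^{\oplus N})$ as a module over $L[t]$ with $t$ acting by right multiplication by $a$; hence the minimal polynomial of $\phi_{P\otimes_L P}(a)$, and therefore of $\widehat{\phi_P}(M)$, divides $\mu_a$. In particular $\sum_i c_i(a)\,\widehat{\phi_P}(M)^i=0$.

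Now apply the ring homomorphism $\widehat{\phi_P}$ to $\sum_i c_i(a)M^i=0$. Since $\widehat{\phi_P}$ is multiplicative and $\widehat{\phi_P}(c_i(a)\,{\rm Id})={\rm Id}\otimes\phi_P(c_i(a))$ (not $c_i(a)\,{\rm Id}$, which is the whole point), this gives $\sum_i({\rm Id}\otimes\phi_P(c_i(a)))\,\widehat{\phi_P}(M)^i=0$. Subtracting the relation $\sum_i c_i(a)\,\widehat{\phi_P}(M)^i=0$ from the previous one, and using $\widehat{\phi_P}(M)^i=\widehat{\phi_P}(M^i)$, we get $\sum_i({\rm Id}\otimes\psi_i)\,\widehat{\phi_P}(M^i)=0$; reading off the block indexed by a pair $(l,k)$ and using $\psi_{d_a}=0$, this becomes
$$\sum_{i=0}^{d_a-1}\psi_i\,\phi_P\big((M^i)_{lk}\big)=0\qquad\text{for all }l,k.$$

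To conclude, let $\mathcal M$ be the $d^2\times d_a$ matrix over $L$ whose $i$-th column ($i=0,\dots,d_a-1$) lists all entries of $M^i$. Linear independence of ${\rm Id},M,\dots,M^{d_a-1}$ says precisely that $\mathcal M$ has full column rank, so there is a $d_a\times d^2$ matrix $\mathcal N$ over $L$ with $\mathcal N\mathcal M={\rm Id}_{d_a}$. Applying the entrywise homomorphism $\widehat{\phi_P}$ to $\mathcal N\mathcal M={\rm Id}_{d_a}$ (legitimate because it is a ring homomorphism on matrices) yields $\widehat{\phi_P}(\mathcal N)\widehat{\phi_P}(\mathcal M)={\rm Id}$; on the other hand the displayed relations, together with the fact that $\psi_i$ commutes with $\phi_P(L)$, say exactly that $\widehat{\phi_P}(\mathcal M)\,(\psi_0,\dots,\psi_{d_a-1})^{T}=0$. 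Hence $(\psi_0,\dots,\psi_{d_a-1})^{T}=\widehat{\phi_P}(\mathcal N)\,\widehat{\phi_P}(\mathcal M)\,(\psi_0,\dots,\psi_{d_a-1})^{T}=0$, so all $\psi_i=0$. The only non-formal point is the identification of $\phi_{P\otimes_L P}(a)$ with $\widehat{\phi_P}(\phi_P(a))$; everything else is bookkeeping exploiting that entrywise application of $\phi_P$ is a ring homomorphism which is intentionally not $L$-linear, so that applying it to the minimal-polynomial relation produces exactly the corrections $\psi_i$.
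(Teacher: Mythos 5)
Your proof is correct and is essentially the paper's argument: you derive the same two relations (applying $\phi_P$ entrywise to the minimal-polynomial identity, and using the embedding $P\otimes_L P\subset P^N$ to see that $\widehat{\phi_P}(\phi_P(a))$ also satisfies $\mu_a$ with scalar coefficients), subtract, and exploit the linear independence of ${\rm Id},\phi_P(a),\dots,\phi_P(a)^{d_a-1}$ over $L$. The only cosmetic difference is the last step, where the paper phrases the conclusion as linear independence of the $(1\otimes\phi)(\phi(a)^i)$ over the commutative algebra $C=L\phi(L)$, while you make the same point explicit via a left inverse of the coordinate matrix together with the fact that the $\psi_i$ commute with $\phi_P(L)$.
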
 

\begin{proof} 
Let us view $\phi$ as a map $L\to {\rm Mat}_d(k)\otimes_k L$.
Applying $\phi$ in the second component to the identity 
$$
\mu_a(\phi(a))=\sum_{i=0}^{d_a}(1\otimes c_i(a))\phi(a)^i=0, 
$$
we have 
$$
\sum_{i=0}^{d_a} (1\otimes \phi(c_i(a)))(1\otimes \phi)(\phi(a))^i=0.
$$
On the other hand, $\phi_{P\otimes_L P}(a)=(1\otimes \phi)(\phi(a))$ so 
since $P\otimes_L P$ is contained in $P^N$, we see that 
the matrix $(1\otimes \phi)(\phi(a))$ is conjugate to 
the restriction of ${\rm Id}_N\otimes \phi(a)$ to an invariant subspace. 
This implies that 
$$
\sum_{i=0}^{d_a} (1\otimes 1\otimes c_i(a))(1\otimes \phi)(\phi(a))^i=0.
$$
So, subtracting, we get
\begin{equation}\label{subt}
\sum_{i=0}^{d_a-1} 
(1\otimes (\phi(c_i(a))-1\otimes c_i(a)))(1\otimes \phi)(\phi(a))^i=0
\end{equation}
(note that the terms corresponding to $i=d_a$ 
cancel since $c_{d_a}(a)=1$). 

Let $C=L\phi(L)$ be the subalgebra of ${\rm Mat}_d(L)$ generated by $L$ 
and $\phi(L)$. Then $C$ is a commutative algebra over the field $\phi(L)$, 
and equation (\ref{subt}) can be viewed as an identity in 
${\rm Mat}_d(k)\otimes_k C\subset {\rm Mat}_d(k)\otimes_k {\rm Mat}_d(k)\otimes_k L$. 

Since $\mu_a(t)$ has degree $d_a$, the matrices 
$\phi(a)^i\in {\rm Mat}_d(k)\otimes_k L$ for $i=0,...,d_a-1$ are linearly independent over $L$. 
This implies that the matrices $(1\otimes \phi)(\phi(a)^i)\in {\rm Mat}_d(k)\otimes_k \phi(L)$
for $i=0,...,d_a-1$ are linearly independent over $\phi(L)$, hence over $C$.  
However, equation (\ref{subt}) provides a linear relation between them over $C$. 
So this relation must be trivial, which implies that 
$\phi(c_i(a))=1\otimes c_i(a)$ for all $i$, as desired.   
\end{proof} 

\begin{proposition}\label{algeb1} 
If $P$ is a weakly Galois $L$-bimodule, then 
$L$ is an algebraic extension of the center ${\mathcal Z}(P)$. 
\end{proposition}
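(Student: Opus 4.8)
The plan is to deduce algebraicity of $L$ over $F := {\mathcal Z}(P)$ directly from Proposition \ref{minpol}. Fix $a \in L$. By Proposition \ref{minpol}, the coefficients $c_0(a), \dots, c_{d_a-1}(a)$ of the minimal polynomial $\mu_a(t)$ of the matrix $\phi(a) \in {\rm Mat}_d(L)$ all lie in $F$. So $\mu_a(t)$ is a monic polynomial of degree $d_a \le d$ with coefficients in $F$, and $\mu_a(\phi(a)) = 0$ in ${\rm Mat}_d(L)$. The map $\phi = \phi_P$ is a ring homomorphism $L \to {\rm Mat}_d(L)$, and since $P$ is free of rank $d$ as a right $L$-module, $\phi$ is injective (its kernel is a two-sided ideal of the field $L$ not containing $1$, hence zero; equivalently, a nonzero element of $L$ acts invertibly on $P_L \cong L^d$). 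Therefore $\mu_a(a) = 0$ already in $L$: indeed $\phi(\mu_a(a)) = \mu_a(\phi(a)) = 0$ because $\phi$ restricted to the central subfield $F$ is scalar multiplication, so $\phi$ carries the polynomial expression $\mu_a(a)$ to $\mu_a(\phi(a))$. Injectivity of $\phi$ then gives $\mu_a(a) = 0$ in $L$.

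Hence every $a \in L$ satisfies a monic polynomial over $F$ of degree $\le d$, so $L$ is algebraic (in fact integral, indeed of degree $\le d$ over $F$ in the sense that each element has a minimal polynomial of bounded degree) over ${\mathcal Z}(P)$, as claimed.

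The only subtlety, and the one place to be careful, is the interchange of "substituting the matrix $\phi(a)$" with "substituting the element $a$": this works precisely because the coefficients $c_i(a)$ are central in $P$, so $\phi(c_i(a)) = c_i(a)\cdot{\rm Id}$, and thus $\phi$ intertwines evaluation of a polynomial with $F$-coefficients at $a \in L$ with evaluation at $\phi(a) \in {\rm Mat}_d(L)$. I do not expect any real obstacle here — the content has already been done in Proposition \ref{minpol}, and what remains is this short bookkeeping argument using injectivity of $\phi_P$.
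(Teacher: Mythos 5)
Your proposal is correct and is essentially the paper's own argument: Proposition \ref{minpol} gives $\mu_a(t)$ monic over ${\mathcal Z}(P)$, and since the coefficients are central, $\phi_P$ (injective because $L$ is a field) carries $\mu_a(a)$ to $\mu_a(\phi_P(a))=0$, so $a$ satisfies the same equation. The paper states this transfer from $\phi(a)$ to $a$ in one line; you have merely spelled out the bookkeeping.
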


\begin{proof}
Proposition \ref{minpol} implies that
for every $a\in L$, $\phi(a)$ satisfies a monic polynomial equation 
over ${\mathcal Z}(P)$, namely the equation $\mu_a(t)=0$.
Hence, $a$ itself satisfies the same equation.  
Thus, $L$ is an algebraic extension of ${\mathcal Z}(P)$. 
\end{proof}

\begin{proposition}
\label{semisi}
If $L$ has characteristic zero or $p>d$, 
then the matrix $\phi_P(a)$
is semisimple for all $a\in L$, i.e.,  
its minimal polynomial $\mu_a$ has distinct roots 
 (over an extension of $L$). 
\end{proposition}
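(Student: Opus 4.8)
The plan is to pin down the minimal polynomial $\mu_a$ of the matrix $\phi_P(a)$ by pure field theory: I will show it coincides with the minimal polynomial $m_a$ of the field element $a$ over the subfield $F:=\mathcal Z(P)\subseteq L$, so that $\mu_a$ is \emph{irreducible} over $F$; semisimplicity of $\phi_P(a)$ then follows from separability of $m_a$, which the hypothesis on the characteristic forces once one notes $\deg\mu_a\le d$. As preliminaries I would record two easy facts: (a) $F=\mathcal Z(P)$ is a subfield of $L$, since it is a subring and, for $0\ne z\in F$, the matrix $\phi_P(z)=z\cdot\mathrm{Id}$ is invertible, whence $\phi_P(z^{-1})=z^{-1}\cdot\mathrm{Id}$ and $z^{-1}\in F$; and (b) the right-action map $\phi=\phi_P\colon L\to\mathrm{Mat}_d(L)$ is an injective ring homomorphism that is the identity on $F$ (i.e.\ $\phi(c)=c\cdot\mathrm{Id}$), hence $F$-linear.

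The main step is the identity $\mu_a=m_a$. By Proposition \ref{minpol} the coefficients of $\mu_a$ lie in $F$, so applying the $F$-linear homomorphism $\phi$ to the relation $\mu_a(\phi(a))=0$ yields $\phi(\mu_a(a))=0$, and injectivity of $\phi$ gives $\mu_a(a)=0$ in $L$; hence $m_a\mid\mu_a$. Conversely, $m_a\in F[t]$ and $m_a(a)=0$ give $m_a(\phi(a))=\phi(m_a(a))=0$, so $\mu_a\mid m_a$. As both polynomials are monic, $\mu_a=m_a$, and this is irreducible over $F$ because $m_a$ is the minimal polynomial of $a$ over the field $F$.

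Finally I would conclude with separability. In characteristic zero every irreducible polynomial over $F$ is separable, so $\phi_P(a)$ is semisimple. In characteristic $p>d$, we have $1\le\deg\mu_a=d_a\le d<p$; an inseparable irreducible polynomial over a field of characteristic $p$ has the form $g(t^p)$ and hence degree divisible by $p$, which is impossible here, so $\mu_a$ is separable. In either case $\mu_a=m_a$ has distinct roots over an extension of $L$, i.e.\ $\phi_P(a)$ is semisimple, for every $a\in L$. The only substantive ingredient is the equality $\mu_a=m_a$, which invokes the weakly Galois hypothesis solely through Proposition \ref{minpol}; everything afterwards is the elementary fact that an irreducible polynomial of degree less than the characteristic is separable, so I anticipate no real obstacle beyond keeping track of the $F$-linearity of $\phi$ in both directions of the divisibility argument.
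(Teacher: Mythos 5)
Your proof is correct and follows essentially the same route as the paper: both use Proposition \ref{minpol} to place the coefficients of $\mu_a$ in the field $\mathcal Z(P)$, identify $\mu_a$ with the minimal polynomial of $a$ (equivalently of $\phi_P(a)$ inside the field $\phi(L)$) over $\mathcal Z(P)$, and then invoke the fact that an irreducible polynomial of degree $\le d<p$ (or in characteristic zero) is separable. Your writeup merely makes explicit the identification $\mu_a=m_a$ that the paper encodes in the statement $F=\mathcal Z[t]/(\mu_a(t))$.
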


\begin{proof}
Let ${\mathcal Z}={\mathcal Z}(P)$, and consider 
the subfield $F$ of $\phi(L)$ generated over ${\mathcal Z}=\phi({\mathcal Z})$ by
$\phi(a)$. Then by Proposition \ref{minpol}, $F={\mathcal
  Z}[t]/(\mu_a(t))$, so it is a finite field extension of
${\mathcal Z}$ of degree $\le d$. 
Since we are in characteristic zero or $p>d$, this extension is separable, 
so $\mu_a$ has distinct roots and $\phi(a)$ is semisimple. 
\end{proof}

\begin{remark}\label{nonsem}
Proposition \ref{semisi} is false in positive characteristic
if $p$ divides $d$. 
For example, let ${\mathcal Z}=k(t)$, where $k$ has characteristic $p$, 
let $L={\mathcal Z}[u]/(u^p-t)$, and let $P=L\otimes_{\mathcal Z}L$. Then $\phi_P(u)$ 
is the operator of multiplication by $x$ in $L[x]/(x^p-t)$, which 
is not semisimple, since $(x-u)^p=0$ but $x-u\ne 0$. 
The extension $L/{\mathcal Z}$ is not separable in this case. 
\end{remark}

\subsection{Split Galois and weakly Galois bimodules over fields}

\begin{proposition}\label{normal} 
(i) If $P$ is a split weakly Galois
$L$-bimodule then $L$ is a normal extension of ${\mathcal Z}(P)$. 

(ii) Let $L$ be a finite field extension of $F$. 
The Galois $L$-bimodule $L\otimes_F L$ is 
split if and only if $L$ is a normal extension of $F$. 

(iii) In characteristic $0$ or $p>d$ all 
split weakly Galois $L$-bimodules of rank $d$ are separable. 
\end{proposition}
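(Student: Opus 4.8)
The plan is to read off each part from the structure theory already established. For (i), I would start from the filtration of Lemma~\ref{fil}: since $P$ is split it has a bimodule filtration with successive quotients $Lg_1,\dots,Lg_d$, $g_i\in{\rm Aut}(L)$, and on the associated graded the operator $\phi_P(a)$ acts on $Lg_i$ as multiplication by $g_i(a)$. Hence the eigenvalues of $\phi_P(a)$ are $g_1(a),\dots,g_d(a)$, all of which lie in $L$, so its minimal polynomial $\mu_a(t)$ splits into linear factors over $L$. By Proposition~\ref{minpol} the coefficients of $\mu_a$ lie in ${\mathcal Z}(P)$, and since $\phi_P$ is an injective unital ring homomorphism from a field we have $\mu_a(a)=0$; hence the minimal polynomial of $a$ over ${\mathcal Z}(P)$ divides $\mu_a$ and therefore also splits over $L$. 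As $L/{\mathcal Z}(P)$ is algebraic by Proposition~\ref{algeb1}, and the minimal polynomial over ${\mathcal Z}(P)$ of each element of $L$ splits in $L$, the extension $L/{\mathcal Z}(P)$ is normal.

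For (ii), I would compute the eigenvalues of $\phi_{L\otimes_F L}(a)$ directly. Choosing an $F$-basis of $L$, the matrix of $\phi_{L\otimes_F L}(a)$ over $L$ coincides with the matrix, with entries in $F$, of the multiplication-by-$a$ operator on $L$ regarded as an $F$-vector space; so its eigenvalues are the roots of the characteristic polynomial of $a$ acting on $L/F$, namely the Galois conjugates of $a$ over $F$. Therefore $L\otimes_F L$ is split if and only if every element of $L$ has all of its conjugates over $F$ again in $L$, which is precisely the assertion that $L/F$ is normal. (Equivalently, the ``only if'' direction follows from part (i) applied to the Galois bimodule $L\otimes_F L$ of Example~\ref{exaa}, once one checks directly from the definition of the center that ${\mathcal Z}(L\otimes_F L)=F$; and the ``if'' direction follows in the separable case from the decomposition $L\otimes_F L\cong\bigoplus_{g\in{\rm Gal}(L/F)}Lg$, extended to the general normal case by incorporating the purely inseparable part.)

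For (iii), I would simply combine Proposition~\ref{semisi} with splitness: in characteristic $0$ or $p>d$ the matrix $\phi_P(a)$ is semisimple for every $a\in L$, i.e.\ diagonalizable over an algebraic closure of $L$, and since $P$ is split all of its eigenvalues already lie in $L$, so $\phi_P(a)$ is diagonalizable over $L$ itself; thus $P$ is split separable. I do not anticipate a serious obstacle in any of the three parts: the only delicate point is in (i), where one must carefully distinguish the minimal polynomial of the matrix $\phi_P(a)$ (which splits over $L$) from the minimal polynomial of the scalar $a$ over ${\mathcal Z}(P)$, and invoke Proposition~\ref{minpol} to pass between them.
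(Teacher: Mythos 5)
Your proposal is correct and follows essentially the same route as the paper: part (i) via Proposition~\ref{minpol} (coefficients of $\mu_a$ in ${\mathcal Z}(P)$, hence the minimal polynomial of $a$ over ${\mathcal Z}(P)$ splits in $L$), part (ii) via the observation that the eigenvalues of $\phi_{L\otimes_F L}(a)$ are the roots of the minimal polynomial of $a$ over $F$ (the paper gets the ``only if'' from (i) and ${\mathcal Z}(L\otimes_F L)=F$, which you also note as an alternative), and part (iii) directly from Proposition~\ref{semisi}. The one delicate point you flag—passing from the matrix identity $\mu_a(\phi_P(a))=0$ to $\mu_a(a)=0$ using $\phi_P(c_i)=c_i\cdot{\rm Id}$ for central coefficients—is handled correctly and matches the paper's argument.
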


\begin{proof} 
(i) By Proposition \ref{minpol}, the minimal 
polynomial of $\phi(a)\in \phi(L)$ 
over ${\mathcal Z}(P)$ is $\mu_a$. Hence,  
the minimal polynomial of $a\in L$ 
over ${\mathcal Z}(P)$ is also $\mu_a$ 
(as $\phi: L\to \phi(L)$ is an isomorphism). 
Thus, $L$ is the minimal common splitting field for the 
polynomials $\mu_a,a\in L$, so it is a normal extension. 

(ii) It is clear that the center of the $L$-bimodule $L\otimes_F L$ is $F$, 
so the ``only if'' part follows from (i). To prove the ``if''
part, note that the eigenvalues of $\phi(a)$ for $a\in L$ are 
the roots of the minimal polynomial of $a$ over $F$.
Since $L$ is normal, all the roots of this polynomial
are in $L$, i.e. $L\otimes_F L$ is a split Galois bimodule. 

(iii) This follows from Proposition \ref{semisi}.  
\end{proof}

Now we will describe the structure of split Galois and weakly Galois bimodules.
Recall Example \ref{exaa}(4).
Given a finite subgroup $G\subset {\rm Aut}(L)$ and integers $n_g>0$, 
define the split separable 
weakly Galois $L$-bimodule 
$$
P({\bold n}, L,G):=\oplus_{g\in G}(Lg)^{n_g},
$$
of rank $d=\sum_{g\in G}n_g$ (where ${\bold n}: G\to \Bbb Z$ is given by ${\bold n}(g)=n_g$). It is clear that 
${\mathcal Z}(P({\bold n},L,G))=L^G$. If $n_g=1$, then $P({\bold n},L,G)$ is a Galois $L$-bimodule,  
and we will denote it by $P(L,G)$. It is easy to see that $P(L,G)\cong L\otimes_{L^G}L$. 
 
\begin{proposition}\label{p1}
(i) All split separable weakly Galois $L$-bimodules $P$
of rank $d$ are of the form $P=P({\bold n},L,G)$ with $\sum_{g\in G} n_g=d$. 
This $L$-bimodule is Galois if and only if $n_g=r$ is a constant function,
with $r|G|=d$. Moreover, the group $G$ is completely determined by $P$.  

(ii) Let $P$ be a split Galois (respectively, weakly Galois) $L$-bimodule, 
and let ${\rm gr}(P)$ be its associated graded
bimodule under the socle filtration (as an $L$-bimodule). 
Then ${\rm gr}(P)$ is a split separable Galois (respectively, weakly Galois) 
bimodule, and it has the form given in (i).  
\end{proposition}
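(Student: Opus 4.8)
\textbf{Proof proposal for Proposition \ref{p1}.}

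The plan is to prove (i) first for the separable case and then bootstrap to (ii) by passing to the associated graded. For (i), start from Lemma \ref{fil}: since $P$ is split separable, it is semisimple as an $L$-bimodule, so $P=\oplus_{g\in S}(Lg)^{n_g}$ for some finite subset $S\subset {\rm Aut}(L)$ and positive integers $n_g$, with $\sum_{g\in S}n_g=d$. The key point is that $S$ must be a subgroup. To see this, compute $P\otimes_L P$: since $Lg\otimes_L Lh\cong L(gh)$ (the left action is standard, and the composite right action twists by $h$ then by $g$, i.e. by $gh$ after the identification), we get $P\otimes_L P=\oplus_{g,h\in S}(L(gh))^{n_gn_h}$. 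Now invoke the weakly Galois hypothesis: $P\otimes_L P$ embeds in $P^N$ for some $N$, so every automorphism appearing in $P\otimes_L P$ already appears in $P$; hence $gh\in S$ whenever $g,h\in S$. Since $S$ is finite and closed under composition inside a group, it is a subgroup $G:={\rm Aut}$-closure $=S$. Thus $P=P(\mathbf{n},L,G)$, and ${\mathcal Z}(P)=L^G$ as noted before the proposition. For the Galois criterion, compare multiplicities: $P\otimes_L P\cong P^d$ means $\sum_{h,g'\,:\,hg'=g}n_hn_{g'}=d\cdot n_g$ for every $g\in G$; taking $g=e$ gives $\sum_{h\in G}n_hn_{h^{-1}}=d\,n_e$, and a short averaging/extremal argument (consider the $g$ minimizing or maximizing $n_g$) forces all $n_g$ equal to some $r$, whence $r|G|=d$; conversely if all $n_g=r$ the identity is immediate. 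Finally $G$ is determined by $P$ because it is the set of $g\in{\rm Aut}(L)$ with ${\rm Hom}_{L\text{-bimod}}(Lg,P)\ne 0$, an invariant of the bimodule $P$.

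For (ii), let $P$ be split Galois (resp. weakly Galois) and let ${\rm gr}(P)$ be the associated graded under the socle filtration as an $L$-bimodule. First, ${\rm gr}(P)$ is semisimple by construction, and it is split since each composition factor of $P$ is of the form $Lg$ with $g\in{\rm Aut}(L)$ by Lemma \ref{fil}; a semisimple split bimodule is separable, again by Lemma \ref{fil}. Next I must check ${\rm gr}(P)$ is still weakly Galois, i.e. free of the same rank $d$ on each side (clear, since ${\rm gr}(P)\cong P$ as one-sided modules, composition factors being counted with multiplicity) and ${\rm gr}(P)\otimes_L {\rm gr}(P)$ embeds in a multiple of ${\rm gr}(P)$. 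For this, note that the multiset of composition factors of $P\otimes_L P$ equals the multiset of $R\otimes_L R'$ over composition factors $R,R'$ of $P$ (the functor $-\otimes_L -$ is exact on each variable since $L$ is a field, so it carries filtrations to filtrations); hence the set of automorphisms occurring in ${\rm gr}(P)\otimes_L{\rm gr}(P)=P\otimes_L P$ (as bimodules up to composition factors) is contained in the set occurring in $P^N$, i.e. in ${\rm gr}(P)$. Therefore ${\rm gr}(P)$ is a split separable weakly Galois bimodule, so part (i) applies and gives ${\rm gr}(P)=P(\mathbf{n},L,G)$. In the Galois case, the composition-factor count shows $[{\rm gr}(P)\otimes_L{\rm gr}(P)]=[{\rm gr}(P)^d]$ as multisets of composition factors, but since both sides are semisimple this is an honest isomorphism, so ${\rm gr}(P)$ is Galois and (i) forces $\mathbf{n}$ constant.

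The main obstacle I anticipate is the multiplicity bookkeeping in the Galois criterion of part (i): turning the system $\sum_{hg'=g}n_hn_{g'}=d\,n_g$ into the conclusion that $\mathbf{n}$ is constant. The clean way is to view $\mathbf{n}$ as an element of the group algebra $\mathbf{Z}[G]$ and the equations as saying $\mathbf{n}\cdot\tilde{\mathbf{n}}=d\,\mathbf{n}$ in $\mathbf{Z}[G]$, where $\tilde{\mathbf{n}}(g)=n_{g^{-1}}$; applying the augmentation gives $(\sum n_g)^2=d\sum n_g$, i.e. $\sum n_g=d$ (already known), while evaluating at a faithful representation or simply comparing the coefficient of $g$ for the $g$ achieving $\max n_g$ (all terms $n_hn_{g'}$ on the left are $\le n_h n_{\max}$, summing to $\le n_{\max}\sum_h n_h=d\,n_{\max}$, with equality forcing $n_{g'}=n_{\max}$ for all $g'$ with some $n_h\ne 0$, i.e. all $g'$) yields constancy. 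I will present the extremal argument since it is elementary and self-contained. A secondary subtlety is making precise that ``$P\otimes_L P$ embeds in a multiple of $P$'' transfers to ${\rm gr}(P)$; this needs only that an embedding of bimodules induces an inclusion of the multisets of composition factors, which is the Jordan--Hölder theorem for the bimodule category (finite length holds by Proposition \ref{dimeq}'s finiteness, or directly since $P$ is finite-dimensional over $L$).
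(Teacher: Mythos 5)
Your proof is correct and follows essentially the same route as the paper: decompose $P$ via Lemma \ref{fil}, note that closure of the set of twists under composition makes it a finite subgroup so $P=P({\mathbf n},L,G)$, analyze multiplicities for the Galois criterion, and pass to ${\rm gr}(P)$ for part (ii). The only differences are minor: where the paper deduces constancy of ${\mathbf n}$ from ${\mathbf p}^2=d{\mathbf p}$ via the Frobenius--Perron theorem, you use an elementary extremal argument at a maximal multiplicity, and you spell out the composition-factor bookkeeping for (ii) that the paper leaves implicit; both steps are valid.
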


\begin{proof}
(i) By Lemma \ref{fil}, 
$P\cong \oplus_{i=1}^dLg_i$ for some $g_i\in {\rm Aut}(L)$. 
Let $G$ be the set of all $g_i$. 
Since $P\otimes_L P$ is contained in $P^N$, 
for any $i,j$ there is an $m$ such that 
$g_i\circ g_j=g_m$. This means that the set $G$ of all the $g_i$ 
inside ${\rm Aut}(L)$ is closed under composition, so it is a finite 
subgroup\footnote{It is clear that a finite subset $G$ 
of any group which is closed under multiplication is a subgroup. Indeed, 
if $g\in G$ then by the pigeonhole principle $g^m=g^n$ for some $m>n$, 
so $g^{m-n}=1$ and $g^{m-n-1}=g^{-1}$.}. For each $g\in G$, let $n_g>0$ 
be the multiplicity of $g$ among 
the $g_i$. Then $P=P({\bold n},L,G)$, where ${\bold n}=(n_g)$, which proves the first statement. 

Now assume that $P$ is Galois of rank $d$. 
Then the element ${\bold p}:=\sum_{g\in G}n_gg$ 
satisfies the relation ${\bold p}^2=d{\bold p}$. Also, since $\sum_g n_g=d$, 
we get ${\bold p}(\sum_g g)=d(\sum_g g)$. 
Thus by the Frobenius-Perron theorem, there exists 
a positive integer $r$ such that $\sum n_g g=r\sum_g g$, so
$n_g=r$, $P=P(L,G)^r$.

The group $G$ is the group of all $g\in {\rm Aut}(L)$ such that $Lg$ occurs in $P$, so it is completely 
determined by $P$. Part (i) is proved. 

(ii) By Lemma \ref{fil}, ${\rm gr}(P)$ 
is a split separable Galois (respectively, weakly Galois) 
$L$-bimodule, and (i) applies. 
\end{proof}

\begin{remark}
Over a field of characteristic $p>0$, 
a split Galois bimodule may fail to be separable (=semisimple). 
An example of such a Galois $L$-bimodule $P$ is given in 
Remark \ref{nonsem}. This bimodule has length $p$, and all
its composition factors are copies of the trivial bimodule $L$. 
\end{remark} 

\subsection{Behavior of split 
weakly Galois bimodules under field extensions} 
Note that if a Galois bimodule $P$ over $L$ is split, 
and if $E$ is a finite field extension 
of $L$, then the bimodule $Q:=E\otimes_L P\otimes_L E$, 
which is a Galois $E$-bimodule by Lemma \ref{ext}(i),  
does not have to be split, even if $E$ is 
Galois over $L$. 

\begin{example} Let $L={\Bbb C}(t)$ and $G=\Bbb Z/2$ 
acting on $L$ by $t\mapsto -t$. 
Let 
$$
E:=L[u]/(u^2-1-t).
$$ 
Take the Galois bimodule $P:=P(L,G)$ over $L$, of rank $2$. 
Then $Q$ has rank $4$. Consider the eigenvalues 
of the 4 by 4 matrix $\phi_Q(u)$. They are 
$\pm \sqrt{1+t}$ and $\pm \sqrt{1-t}$, and the last two don't
belong to $E$. 
Thus, $Q$ is not split over $E$. 
\end{example}

This happens because $E$ is not Galois over $L^G$ 
(it has degree 4 over $L^G$, while the Galois closure has degree 8)
and the automorphism $g$ of $L$ given by $g(t)=-t$ does 
not lift to $E$. More precisely, we have the following 
result. 

\begin{proposition}\label{p1a}
Let $G\subset {\rm Aut}(L)$ be a finite subgroup, and $E$ a
normal extension of $L$. 
Then: 

(i) The weakly Galois $E$-bimodule $Q=E\otimes_L P({\bold n},L,G)\otimes_L E$ 
is split if and only if $E$ is a normal extension of the center 
${\mathcal Z}(P({\bold n},L,G))=L^G$. 

(ii) In this case, ${\rm gr}(Q)=P(r{\bold n},E,G')$, 
where $G'={\rm Gal}(E/L^G)$ and 
$r=[E:E_{\rm sep}]$, so that we have an exact sequence
$$
1\to {\rm Gal}(E/L)\to G'\to G\to 1,
$$  
and where we denote the pullback of ${\bold n}$ to $G'$ also by ${\bold n}$. 
Moreover, if $E$ is separable over $L$, we have $Q=P({\bold n},E,G')$.  
\end{proposition}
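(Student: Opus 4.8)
The plan is to reduce everything to the already-established classification of split (weakly) Galois bimodules over fields (Proposition \ref{p1}), combined with the basic Galois-theoretic facts recalled above and the behavior of the center under the extension $L\to E$. For part (i), observe first that $\mathcal Z(Q)$ can be computed directly: since $Q=E\otimes_L P(\bold n,L,G)\otimes_L E$ and $P(\bold n,L,G)=\oplus_g (Lg)^{n_g}$, the bimodule $Q$ is filtered (by Lemma \ref{fil}) with successive quotients among the $E\otimes_L Lg\otimes_L E$. Each $E\otimes_L Lg\otimes_L E$ is, as an $E$-bimodule, a sum of copies of $Eg'$ for the various $g'\in\mathrm{Aut}(E)$ lifting $g$ (where we use that $E/L$ is normal, so $g\in G\subset\mathrm{Aut}(L)$ either lifts to $\mathrm{Aut}(E)$ or the tensor product over $L$ produces embeddings $E\to\overline E$ not landing in $E$). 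Concretely $E\otimes_L Lg\otimes_L E\cong E\otimes_{L,g}E$ (twisting the left action of $L$ on the first factor by $g$), whose right dimension over $E$ is $[E:L]$ and whose composition factors as an $E$-bimodule are the $Eh$ with $h|_L=g$ if any such lift exists in $\mathrm{Aut}(E)$, and otherwise it is split only when $\overline E$-eigenvalues of $\phi(a)$ lie in $E$. The upshot is that $Q$ is split iff every $g\in G$ lifts to an automorphism of $E$ that, together with $\mathrm{Gal}(E/L)$, generates a finite group of automorphisms of $E$ — and this is exactly the condition that $E$ be normal over $L^G$. I would make this precise by noting $\mathcal Z(Q)\subseteq E$ is the maximal subfield over which $Q$ is linear, checking $\mathcal Z(Q)=E^{G'}$ when the lifts exist (using $(L^G)_{\mathrm{perf}}$ bookkeeping in the inseparable case), and invoking $L^G\subseteq E$ with Proposition \ref{normal}(i): if $Q$ is split then $E$ is normal over $\mathcal Z(Q)$, and $\mathcal Z(Q)$ will be forced to equal $L^G$ once we know $E\supseteq L\supseteq L^G$ and the center cannot be larger than $L^G$ because $\phi_Q$ restricted to $L^G$-elements is already scalar while no element of $L\setminus L^G$ can be central (it moves under some $g$ occurring in $P$, hence under some lift occurring in $Q$). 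Conversely, if $E$ is normal over $L^G$, then every $g\in G$ extends to $\mathrm{Gal}(E/L^G)=G'$, so all composition factors of $Q$ are of the form $Eh$, $h\in G'$, and $Q$ is split by Lemma \ref{fil}.

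For part (ii), assume $E$ normal over $L^G$ and set $G'=\mathrm{Gal}(E/L^G)$. The restriction map $G'\to\mathrm{Aut}(L)$ has image $G$ (every $g\in G$ lifts, and conversely any $\sigma\in G'$ preserves $L$ since $L/L^G$ is normal, and acts on $L$ through $\mathrm{Gal}(L/L^G)=G$) with kernel exactly $\mathrm{Gal}(E/L)$, giving the exact sequence $1\to\mathrm{Gal}(E/L)\to G'\to G\to 1$. Now compute $\mathrm{gr}(Q)$ under the socle filtration using Proposition \ref{p1}(ii): $\mathrm{gr}(Q)$ is split separable, hence of the form $P(\bold m,E,G')$ for some $\bold m:G'\to\mathbb Z_{>0}$, and it suffices to identify $\bold m$. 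I would do this by a dimension/multiplicity count: $\mathrm{gr}(E\otimes_L Lg\otimes_L E)$ is the split separable bimodule attached to the $L$-algebra $E\otimes_{L,g}E$, which is a product of copies of $E$ indexed by $\mathrm{Hom}_{L^G}(E,\overline E)$ up to the $g$-twist; its separabilized version distributes evenly over the $[E:E_{\mathrm{sep}}]=r$ wild directions, so each lift $h\in G'$ of $g$ appears with multiplicity $r$ in $\mathrm{gr}(E\otimes_L Lg\otimes_L E)$. Summing over $g$ with weight $n_g$ gives multiplicity $r n_g$ for each $h\in G'$ over $g=h|_L$, i.e. $\bold m=r\bold n$ (pulled back along $G'\to G$), so $\mathrm{gr}(Q)=P(r\bold n,E,G')$. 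When $E/L$ is separable, $E/L^G$ is separable so $r=1$, and moreover $Q$ itself is already separable: each $E\otimes_{L,g}E$ is then a separable $E$-algebra, hence a semisimple $E$-bimodule, so $Q$ is semisimple and equals its associated graded, giving $Q=P(\bold n,E,G')$.

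The main obstacle I anticipate is the inseparable bookkeeping in part (ii): pinning down that the multiplicity of each lift $h$ is exactly $r=[E:E_{\mathrm{sep}}]$ (rather than some other function of the ramification) requires care with the structure of $E\otimes_{L,g}E$ as an $L$-algebra and its reduction. The clean way is to first base change to $L_{\mathrm{sep}}$ (or to $E_{\mathrm{sep}}$) where everything is separable and the count is immediate by ordinary Galois theory — there each $E\otimes_{L,g}E$ reduced is $\prod_{h|_L=g}E$ — and then observe that passing from the separable subextension up to $E$ tensors each simple summand with the purely inseparable piece $E\otimes_{E_{\mathrm{sep}}}E$, which is a local ring of length $r$ with residue field $E$, contributing the uniform factor $r$ after taking $\mathrm{gr}$. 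Everything else — the exactness of the sequence, the split-ness criterion in (i), the separable case $r=1$ — is routine given Lemma \ref{fil}, Proposition \ref{p1}, and the Galois-theoretic facts recalled in Subsection 2.4.
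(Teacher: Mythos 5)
Your argument is correct in substance, and for part (ii) it is essentially the paper's proof: decompose $Q=\oplus_g (E\otimes_L Lg\otimes_L E)^{n_g}$, identify the exact sequence via restriction $\mathrm{Gal}(E/L^G)\to \mathrm{Gal}(L/L^G)=G$, and count composition factors of each $E\otimes_{L,g}E$ by splitting off the separable part (each lift $h\in G'$ of $g$ occurring $r=[E:E_{\rm sep}]$ times, coming from the local purely inseparable piece $E\otimes_{E_{\rm sep}}E$ of length $r$), with the separable case handled by semisimplicity; the paper states this count without more detail than you give. Where you diverge is part (i): the paper simply observes that $P({\bold n},L,G)$ contains $P(L,G)\cong L\otimes_{L^G}L$ and is contained in a multiple of it, so splitness of $Q$ is equivalent to splitness of $E\otimes_L P(L,G)\otimes_L E=E\otimes_{L^G}E$, which is Proposition \ref{normal}(ii) applied to $E/L^G$ — a two-line reduction that avoids any discussion of lifts or of ${\mathcal Z}(Q)$. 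Your route (splitness forces every $g\in G$ to lift to ${\rm Aut}(E)$, which together with normality of $E/L$ gives normality of $E/L^G$; conversely lifts give composition factors $Eh$ and Lemma \ref{fil} applies) also works, but note two small repairs it needs: the implication ``every $g$ lifts $\Rightarrow E/L^G$ normal'' deserves the one-line argument that any $L^G$-embedding $\sigma\colon E\to\overline E$ restricts on $L$ to some $g\in G$ (as $L/L^G$ is Galois) and then $\tilde g^{-1}\sigma$ fixes $L$ and preserves $E$ by normality of $E/L$; and your justification that ${\mathcal Z}(Q)\subseteq L^G$ is incomplete as stated, since ruling out central elements of $L\setminus L^G$ says nothing about $E\setminus L$ — the quick fix is that $Q$ contains the subbimodule $E\otimes_{L^G}E$, whose center is $L^G$, so ${\mathcal Z}(Q)\subseteq L^G$ (this is also essentially Theorem \ref{spli2}(v)). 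With these two points made precise, your proof is complete; the paper's reduction to Proposition \ref{normal}(ii) just buys a shorter argument by never touching the center or individual lifts.
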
 

\begin{proof}
(i) The bimodule $P({\bold n},L,G)$ contains $P(L,G)$ and is
contained in its multiple. Also, we have 
$$
E\otimes_L P(L,G)\otimes_L E=E\otimes_L L\otimes_{L^G} L\otimes_L E=
E\otimes_{L^G}E,
$$
so (i) follows from Proposition \ref{normal}(ii). 

To prove (ii), note that $Q$ has a filtration with successive
quotients $Eg\otimes_L E$, $g\in G$ ($n_g$ copies of each), 
and $Eg\otimes_L E$ has a filtration 
whose successive quotients are $Eh$, where $h\in g\cdot {\rm
Gal}(E/L)$ (each occurring $r$ times). This implies the statement on
the structure of ${\rm gr}(Q)$. Finally, 
if $E$ is separable over $L$, then $Q$ is separable too, 
and thus $Q=P({\bold n},E,G')$.
\end{proof}

\subsection{General weakly Galois bimodules}

Let $L$ be a field and $P$ be a weakly Galois $L$-bimodule of rank $d$. 

\begin{theorem}\label{spli2}
(i) Let $E$ be the smallest field extension of $L$
that contains all the eigenvalues of $\phi(a):=\phi_P(a)$, $a\in L$.
Then $E$ is a finite extension of $L$ 
and a normal extension of ${\mathcal Z}(P)$. 
 
(ii) $E\otimes_L P\otimes_L E$ is a split weakly Galois
$E$-bimodule of rank $d[E:L]$, which is Galois if so is $P$. 

(iii) We have 
${\rm gr}(E\otimes_L P\otimes_L E)=P({\bold n},E,G)$,
where $G$ is a finite subgroup of ${\rm Aut}(E)$
containing $H:={\rm Gal}(E/L)$ 
as a (not necessarily normal) subgroup. 

(iv) $P$ contains a copy of the trivial
bimodule $L$. 

(v) If $F$ is any finite extension of $L$ 
then ${\mathcal Z}(F\otimes_L P\otimes_L F)={\mathcal Z}(P)$.
In particular, ${\mathcal Z}(F\otimes_L P({\bold n},L,G)\otimes_L F)=L^G$. 

(vi) 
The center ${\mathcal Z}({\rm gr}(E\otimes_L P\otimes_L E))$ is $L^G:=L\cap
E^G=E^G$.

(vii) $P$ is split if and only if ${\rm Gal}(E/L)$ is normal in
$G$. 
\end{theorem}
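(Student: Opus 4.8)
The plan is to reduce the assertion to the structure of the simple composition factors of $P$ as an $L$-bimodule. Recall that such a factor $R$ is isomorphic to $C_R$, where $C_R:=L\cdot\phi_R(L)\subseteq{\rm End}_L(R)$ is the subalgebra generated by the left and right actions of $L$: it is commutative, and, $R$ being a simple bimodule, it is a field with $\dim_{C_R}R=1$, the left $L$-action being multiplication and the right one being multiplication through an embedding $\phi_R\colon L\hookrightarrow C_R$. Since ${\mathcal Z}(P)$ acts centrally on every subquotient of $P$, we have ${\mathcal Z}(P)\subseteq{\mathcal Z}(R)$ and $\phi_R$ fixes ${\mathcal Z}(R)$; and since $\dim_L R=\dim R_L$ for composition factors of a weakly Galois bimodule (the argument of Proposition \ref{dimeq} applies to the powers $R^{\otimes s}$, whose composition factors lie among those of $P$), we get $R\cong Lg$ for some $g\in{\rm Aut}(L)$ precisely when $C_R=L$. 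The first point is then that $P$ is split if and only if $C_R=L$ for all composition factors $R$ (by Lemma \ref{fil}), and that this is equivalent to $E=L$, since the eigenvalues of $\phi_P(a)$, $a\in L$, are exactly the elements $\sigma(\phi_R(a))$ over the composition factors $R$ and the $L$-embeddings $\sigma\colon C_R\hookrightarrow\overline{L}$.

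The forward implication is then immediate: if $P$ is split then $E=L$, so $H={\rm Gal}(E/L)$ is trivial and a fortiori normal in $G$.

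For the converse I would assume $H\trianglelefteq G$ and show $C_R=L$ for every composition factor $R$ of $P$. Fixing an $L$-embedding $C_R\hookrightarrow\overline{L}$, the eigenvalue description above places $\phi_R(L)$, hence $C_R$, inside $E$, so $\phi_R$ becomes an embedding $L\to E$ fixing ${\mathcal Z}(P)$; as $E$ is normal (and, by Proposition \ref{algeb1}, algebraic) over ${\mathcal Z}(P)$ by part (i), $\phi_R$ extends to an automorphism $\widetilde\phi_R\in{\rm Aut}(E)$. The key step is to show that $E\widetilde\phi_R$ occurs as a composition factor of $E\otimes_L R\otimes_L E$, hence of $E\otimes_L P\otimes_L E$, so that $\widetilde\phi_R\in G$ by part (iii). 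I would obtain this from the adjunction
$$
{\rm Hom}_{E\text{-bimod}}\big(E\otimes_L R\otimes_L E,\ E\widetilde\phi_R\big)\ \cong\ {\rm Hom}_{L\text{-bimod}}\big(R,\ (E\widetilde\phi_R)|_{L}\big),
$$
whose right-hand side is nonzero because the field $C_R\subseteq E$, with left multiplication by $L$ and right multiplication twisted by $\phi_R$, is an $L$-sub-bimodule of the restriction $(E\widetilde\phi_R)|_{L}$ isomorphic to $R$; since $E\widetilde\phi_R$ is simple, any nonzero map onto it is surjective. Now $H={\rm Gal}(E/L)={\rm Aut}(E/E^H)$ with $E^H$ the perfect closure of $L$ in $E$; as $\widetilde\phi_R\in G$ normalizes $H$, it preserves $E^H$, so $L$ and $\phi_R(L)$ have the same perfect closure in $E$. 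Because $\phi_R$ fixes ${\mathcal Z}(R)$, comparing separable closures of ${\mathcal Z}(R)$ inside this common perfect closure forces $\phi_R(L)=L$, hence $C_R=L\cdot\phi_R(L)=L$. (In characteristic $0$, or whenever $E/L$ is separable, this simplifies: $E^H=L$, so $\widetilde\phi_R(L)=L$ gives $\phi_R(L)=L$ at once.) Thus every composition factor of $P$ has the form $Lg$, and $P$ is split by Lemma \ref{fil}.

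The step I expect to be the main obstacle is the key step above — checking that $E\widetilde\phi_R$ genuinely occurs as a composition factor of the base-changed bimodule, and identifying the composition factors of $E\otimes_L P\otimes_L E$ precisely with the $Eg$, $g\in G$ — together with the positive-characteristic bookkeeping with perfect and separable closures needed for the final identification $\phi_R(L)=L$. The remaining arguments are routine translations between $P$ and its composition factors.
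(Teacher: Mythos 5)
You address only part (vii) of the seven-part statement, taking (i)--(vi) — in particular (i) and (iii) — as given, so parts (i)--(vi) remain unproved in your write-up. Within (vii), your forward implication is correct and in fact simpler than the paper's (which invokes Proposition \ref{p1a}): if $P$ is split then, by the very definition of $E$, one has $E=L$, so $H={\rm Gal}(E/L)$ is trivial and a fortiori normal. Your reduction of the converse to showing $\phi_R(L)=L$ for each composition factor $R$, and the adjunction argument producing a nonzero (hence surjective) map $E\otimes_L R\otimes_L E\to E\widetilde\phi_R$ and therefore $\widetilde\phi_R\in G$ via part (iii), are also correct and constitute a genuinely different, cleaner route up to that point than the paper's.

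The last step of your converse, however, is a genuine gap: from ``$\phi_R$ fixes ${\mathcal Z}(R)$'' and ``$L$ and $\phi_R(L)$ have the same purely inseparable closure in $E$'' one cannot deduce $\phi_R(L)=L$. Concretely, let $p={\rm char}\,k>0$, $S=\mathbb{F}_p(u,v)$ with the involution $\tau$ exchanging $u$ and $v$, $F_0=S^{\tau}$, $L=S(u^{1/p})$, and $P=L\otimes_{F_0}L$, a Galois $L$-bimodule of rank $2p$ with ${\mathcal Z}(P)=F_0$. Here $E=S(u^{1/p},v^{1/p})$, $H={\rm Gal}(E/L)=1$ because $E/L$ is purely inseparable, and $G=\{1,\widetilde\phi\}\cong\mathbb{Z}/2$, where $\widetilde\phi$ acts as $\tau$ on $S$ and exchanges $u^{1/p}$ and $v^{1/p}$; thus $H\trianglelefteq G$. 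Yet $P$ is not split ($v^{1/p}$ is an eigenvalue of $\phi_P(u^{1/p})$), in agreement with Proposition \ref{normal}(ii), since $L/F_0$ is not normal. Moreover $P$ has a simple direct summand $R$ with $C_R=E$, $\phi_R|_S=\tau$ and $\phi_R(u^{1/p})=v^{1/p}$, which satisfies every hypothesis you invoke in the final step: $\phi_R$ fixes ${\mathcal Z}(R)=F_0$, and $L$ and $\phi_R(L)=S(v^{1/p})$ have the same purely inseparable closure $E$ inside $E$ — but $\phi_R(L)\neq L$. So the step is not just unjustified; in positive characteristic the implication ``$H$ normal in $G$ implies $P$ split'' itself fails in this example, because the Galois correspondence degenerates on the inseparable part: the distinct fields $L$ and $\phi_R(L)$ have the same group ${\rm Gal}(E/-)$.

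You are therefore not to blame for being unable to close this step: the paper's own one-line converse, which asserts that $L$ is a Galois extension of $L^G$ with group $G/H$, tacitly assumes that $G$ stabilizes $L$ and runs into the same example (there $L^G=F_0$ and $L/F_0$ is not even normal). The criterion that actually characterizes splitness is $g(L)=L$ for all $g\in G$ (equivalently, normality of $L$ over ${\mathcal Z}(P)$, as in Proposition \ref{normal}), which is strictly stronger than $H\trianglelefteq G$ when $E/L$ is inseparable. In characteristic zero, or whenever $E/L$ is separable, your argument does close immediately: then $E^{H}=L$, so $\widetilde\phi_R\in G$ normalizing $H$ gives $\widetilde\phi_R(L)=L$ and hence $\phi_R(L)=L$.
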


\begin{proof}
(i) Consider the commutative finite dimensional 
$L$-algebra $C=L\phi(L)\subset {\rm Mat}_d(L)$. Let $a_1,...,a_s\in L$ 
be such that $\phi(a_1),...,\phi(a_s)$ are generators of $C$ over $L$. 
Consider the field $E_*$ obtained from $L$ by adding the
eigenvalues of $\phi(a_i)$, $i=1,...,s$. Then $E_*$ is finite
over $L$, and there exists a
basis ${\mathcal B}$ of $E_*^d$ over $E_*$ in which the matrices $\phi(a_i)$ 
are upper triangular. In this basis, the matrix $\phi(a)$ 
is upper triangular for all $a\in L$, since it is a polynomial of
$\phi(a_i)$ over $L$. Thus, $E_*$ contains 
all the eigenvalues of $\phi(a)$, so $E_*=E$, and hence 
$E$ is finite over $L$. 

It is clear that $E$ is a normal extension of ${\mathcal Z}(P)$, since
it is the splitting field of the minimal polynomials of $\phi(a)$
for $a\in L$, which according to Proposition \ref{minpol} 
have coefficients in ${\mathcal Z}(P)$. 

(ii) By Lemma \ref{ext}(i), $E\otimes_L P\otimes_L E$
is a weakly Galois $E$-bimodule. 
 Let $\lambda_j: L\to E$, $j=1,...,d$,
be the eigenvalue homomorphisms for $\phi$, 
i.e. $\lambda_j(a)$ are the diagonal entries of $\phi(a)$ in 
the basis ${\mathcal B}$. Since 
$E$ is normal over ${\mathcal Z}(P)$, for any $i$, there exists 
an automorphism $g_i\in {\rm Gal}(E/{\mathcal Z}(P))$ such that 
$\lambda_i(a)=g_ia$ for any $a\in L$. This implies that 
$E^{\lambda_i}\otimes_L E=Eg_i\otimes_L E$,
which means that the associated graded 
bimodule ${\rm gr}(E\otimes_L P\otimes_L E)$ 
of $E\otimes_L P\otimes_L E$
under the socle filtration is the direct sum of 
bimodules $Eg_ih$, where $h$ runs through ${\rm Gal}(E/L)$. 
This implies that $E\otimes_L P\otimes_L E$ is a split weakly Galois
$E$-bimodule, which is Galois if so is $P$, proving (ii). 

(iii) Given (ii), Proposition \ref{p1}
applied to ${\rm gr}(E\otimes_L P\otimes_L E)$ 
implies that the set $G$ of elements $g_ih$ 
must form a finite subgroup of ${\rm Gal}(E/{\mathcal Z}(P))$ 
containing $H:={\rm Gal}(E/L)$, and
 ${\rm gr}(E\otimes_L P\otimes_L E)=P({\bold n},E,G)$.
Thus (iii) is proved.   

(iv) By (iii), for some $i=i_0$,
$\lambda_i|_L$ has to be the identity. Let us take a 
common eigenvector $v\in P$ of $\phi(a)$ with eigenvalue $\lambda_{i_0}(a)=a$: 
$\phi(a)v=av$ for all $a\in L$. Then $Lv\subset P$ is a
subbimodule isomorphic to $L$. This proves (iv). 

(v) Note first that 
$E\otimes_L P\otimes_L E$ contains $P$ as an $L$-bimodule, 
so $L\cap {\mathcal Z}(E\otimes_L P\otimes_L E)={\mathcal  Z}(P)$. 
Also note that by (iv), $E\otimes_L P\otimes_L E$ has a subbimodule 
isomorphic to $E\otimes_L E$. Hence 
${\mathcal Z}(E\otimes_L P\otimes_L E)\subset L$, 
which implies that 
${\mathcal Z}(E\otimes_L P\otimes_L E)={\mathcal Z}(P)$, 
as desired.  

(vi) This follows from (iii). 

(vii) If $P$ is split, 
Proposition \ref{p1a} implies that 
${\rm Gal}(E/L)$ is normal in
$G$. Conversely, if ${\rm Gal}(E/L)$ is normal in
$G$, we see that $L$ is a Galois extension of $L^G$, 
and ${\rm Gal}(L/L^G)=G/{\rm Gal}(E/L)$, so the statement
follows. 
\end{proof} 

\begin{example}
If $P$ has the same dimension over $L$ as a right and left vector
space but is not weakly Galois, then there may be no 
extension $E$ such that $E\otimes_L P\otimes_L E$ is split.
For example, consider $L=\Bbb C(t)$ and $P=L\oplus L$, with the
left and right actions defined by 
$$
(f\circ (a,b)\circ g)(t)=(f(t)a(t)g(t^2),f(t^2)b(t)g(t)),
$$
Then $P$ is 3-dimensional over $L$ on both sides, 
and the maps $\lambda_i$ are defined by $\lambda_1(t)=t^2$, 
$\lambda_2(t)=t^{1/2}$, $\lambda_3(t)=-t^{1/2}$. So
the splitting field $E$ has to contain $t^{1/2^n}$ for all $n$
and therefore cannot be finite over $L$.  
Note that ${\mathcal Z}(P)=\Bbb C$, 
so $L$ is a transcendental extension of ${\mathcal Z}(P)$. 
\end{example}

\begin{corollary}\label{chara}
Let $P$ be a Galois bimodule over a field $L$ of degree $d$. 

(i) Some power $\chi_a^{N_a}$ of the characteristic polynomial $\chi_a$ 
of $\phi_P(a)$, $a\in L$ is a power of its minimal polynomial $\mu_a$. 
In characteristic zero or $p>d$, one may take $N_a=1$, and in characteristic
$p$ in general, one may take $N_a=p^s$ for some $s=s_a$
such that $p^s$ divides $[L:{\mathcal Z}(P)]$.\footnote{We will
  show later in Corollary \ref{chara1} that in fact one can
  always take $N_a=1$, but this is a more difficult result.}  

(ii) The coefficients $C_i(a)$ of $\chi_a^{N_a}$
 belong to ${\mathcal Z}(P)$. 
\end{corollary}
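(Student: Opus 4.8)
The plan is to reduce to the split case, read off the structure from the classification of split Galois bimodules, and then descend. Write $\mathcal Z=\mathcal Z(P)$, and for $a\in L$ let $\nu_a(t)$ be the radical of $\chi_a$ over $\overline{L}$, i.e. the product of the distinct monic linear factors of $\chi_a$; since the minimal and the characteristic polynomial of a matrix have the same roots, $\nu_a$ is also the radical of $\mu_a$. By Proposition \ref{minpol} the coefficients of $\mu_a$ lie in $\mathcal Z$, and since $\phi_P$ restricts to a $\mathcal Z$-algebra isomorphism of $L$ onto the subfield $\phi_P(L)\subseteq{\rm Mat}_d(L)$ containing $\phi_P(a)$, the polynomial $\mu_a$ is in fact the minimal polynomial of $a$ over $\mathcal Z$, hence irreducible over $\mathcal Z$. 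Consequently, over $\overline{L}$ we have $\mu_a=\nu_a^{\,q_a}$, where $q_a=1$ in characteristic $0$ and $q_a=p^{e_a}=[\mathcal Z(a):\mathcal Z]_{\rm insep}$ in characteristic $p$ (and $q_a=1$ also when ${\rm char}\,L=p>d$, by Proposition \ref{semisi}). I claim (i) and (ii) then both reduce to the single assertion that \emph{$\chi_a$ is a perfect power of $\nu_a$}, say $\chi_a=\nu_a^{\,r_a}$: granting it, $\chi_a^{\,q_a}=\nu_a^{\,r_aq_a}=(\nu_a^{\,q_a})^{r_a}=\mu_a^{\,r_a}$, so one may take $N_a=q_a$; then $\chi_a^{N_a}=\mu_a^{r_a}$ has coefficients in $\mathcal Z$, and $q_a=p^{e_a}$ divides $\deg\mu_a=[\mathcal Z(a):\mathcal Z]$ and hence $[L:\mathcal Z]$; in characteristic $0$ or $p>d$ this gives $N_a=1$.

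So the whole problem is to show that all roots of $\chi_a$ occur with the same multiplicity. For this I would apply Theorem \ref{spli2}: let $E$ be the finite extension of $L$ produced there, so that $Q:=E\otimes_LP\otimes_LE$ is a split Galois $E$-bimodule of rank $d[E:L]$ (Galois since $P$ is, by Theorem \ref{spli2}(ii)) with ${\rm gr}(Q)=P(\mathbf{n},E,G)$ for a finite subgroup $G\subseteq{\rm Aut}(E)$. Since ${\rm gr}(Q)$ is Galois, Proposition \ref{p1}(i) forces $\mathbf{n}$ to be a constant $r$, i.e. ${\rm gr}(Q)\cong\bigoplus_{g\in G}(Eg)^r$. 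Now fix $a\in L$. The characteristic polynomial of $\phi_Q(a)$ is unchanged on passing to ${\rm gr}(Q)$, where $a$ acts on the summand $Eg$ by right multiplication by $g(a)$; hence it equals $\prod_{g\in G}(t-g(a))^r$, and since the fibre of $g\mapsto g(a)$ over each $\alpha$ in the orbit $Ga$ is a left coset of $\Stab_G(a)$ and so has constant size, this product equals $\bigl(\prod_{\alpha\in Ga}(t-\alpha)\bigr)^{r\,|\Stab_G(a)|}$. On the other hand, choosing an $L$-basis of $E$ and a left $L$-basis of $P$ yields a left $E$-basis of $Q=E\otimes_LP\otimes_LE$ in which $\phi_Q(a)$ for $a\in L$ is block diagonal with $[E:L]$ identical blocks, each equal to $\phi_P(a)$ (for $a\in L$ the right action only moves the middle tensor factor, commuting past the outer copies of $E$); hence the characteristic polynomial of $\phi_Q(a)$ also equals $\chi_a(t)^{[E:L]}$. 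Comparing the two expressions, $\chi_a^{[E:L]}$ is a perfect power of the separable polynomial $\prod_{\alpha\in Ga}(t-\alpha)$, which is exactly $\nu_a$ (the roots of $\chi_a$ being precisely $Ga$); matching multiplicities — which forces each multiplicity in $\chi_a^{[E:L]}$, hence each multiplicity in $\chi_a$, to be constant — gives $\chi_a=\nu_a^{\,r_a}$ for some $r_a$, as wanted.

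The step I expect to be the main obstacle is exactly this equal-multiplicity claim; everything else is routine separability bookkeeping. It hinges on the rigidity of Galois bimodules, which enters precisely through Proposition \ref{p1}(i) — ultimately the Frobenius--Perron argument — forcing the multiplicity function $\mathbf{n}$ of ${\rm gr}(Q)$ to be constant; once that is available, the orbit--stabiliser count above is immediate. Two smaller points will need care: checking the block-diagonal description of $\phi_Q(a)$ for $a\in L$ on $E\otimes_LP\otimes_LE$, and noting that in positive characteristic we are taking $N_a$ to be a power of $p$, which is automatic since $q_a=p^{e_a}$.
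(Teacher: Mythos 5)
Your proof is correct and takes essentially the same route as the paper's: pass to the splitting field $E$ of Theorem \ref{spli2}, use ${\rm gr}(E\otimes_L P\otimes_L E)=P(E,G)^r$ together with the orbit--stabilizer count and the identity $\chi_{\phi_{P_*}(a)}=\chi_a^{[E:L]}$ to see that all roots of $\chi_a$ occur with equal multiplicity, and combine this with Proposition \ref{minpol} (which identifies $\mu_a$ as the minimal polynomial of $a$ over ${\mathcal Z}(P)$, hence a $p^{e_a}$-th power of its radical) to get both (i) and (ii). Your packaging via the radical $\nu_a$ is only a cosmetic rearrangement of the paper's comparison of $\chi_a^{*}$ with $\mu_a$.
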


\begin{proof}
To prove (i), let $P_*=E\otimes_L P\otimes_L E$, 
where $E$ is the splitting field 
from Proposition \ref{spli2}, and let $D=[E:L]$.  
Then for $a\in L$ the characteristic polynomial of $\phi_{P_*}(a)$ 
is the $D$-th power of the characteristic polynomial of 
$\phi_P(a)$. Also, the minimal polynomials of 
$\phi_P(a)$ and $\phi_{P_*}(a)$ coincide. 
So it suffices to check that 
some power of the characteristic polynomial of
$\phi_{P_*}(a)$ is a power of its minimal polynomial. 
But by Proposition \ref{spli2}, 
$P_*$ is split, and ${\rm gr}(P_*)=P(E,G)^r$. 
So the characteristic polynomial $\chi_a^*(t)$ of
$\phi_{P_*}(a)$ can be written in the form 
$$
\chi_a^*(t)=\prod_{g\in G}(t-ga)^r,
$$
while the minimal polynomial is 
$$
\mu_a(t)=\prod_{b\in O(a)}(t-b)^{N_a}
$$
where $O(a)$ is the orbit of $a$ under the action of $G$,
and $N_a=1$ for characteristic zero or $p>d$ and a power of $p$
dividing $[L:{\mathcal Z}(P)]$ in characteristic $p$ in general. 
This shows that $\chi_a^*(t)^{N_a}=
\mu_a(t)^K$, where $K=r\cdot 
|{\rm Stab}_G(a)|$, and ${\rm Stab}_G(a)$ is the stabilizer of
$a$ in $G$. Thus (i) is proved. 

(ii) follows from (i) and Proposition \ref{minpol}. 
\end{proof}

\subsection{Finiteness over the center} 

Let $P$ be a weakly Galois bimodule over a field $L$ of rank $d$. 

\begin{proposition}\label{algeb}
If ${\rm char}L=0$, or ${\rm char}L>d$
then $L$ is a finite extension of ${\mathcal Z}(P)$ of degree
$\le d$, whose degree divides $d$ if $P$ is a Galois bimodule.
\footnote{It will be shown later in Proposition \ref{conta3},
  Proposition \ref{conta1}, Theorem \ref{classif} that 
this result is in fact valid in any characteristic.} 
\end{proposition}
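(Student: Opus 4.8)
The statement to prove is Proposition~\ref{algeb}: if $\operatorname{char}L = 0$ or $\operatorname{char}L > d$, then $L$ is a finite extension of $\mathcal{Z}(P)$ of degree $\le d$, which divides $d$ when $P$ is Galois. By Proposition~\ref{algeb1} we already know $L/\mathcal{Z}(P)$ is algebraic, so the real content is the finiteness and the degree bound. My strategy is to reduce to the split (indeed split separable) case via the splitting field $E$ of Theorem~\ref{spli2}, where the structure of the associated graded bimodule is completely explicit, and then read off the degree from the group $G$ appearing there.

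Here are the steps in order. First, let $E$ be the finite normal extension of $L$ from Theorem~\ref{spli2}(i), and form $P_* := E\otimes_L P\otimes_L E$, a split weakly Galois $E$-bimodule of rank $d[E:L]$ by Theorem~\ref{spli2}(ii). Second, since $\operatorname{char}L = 0$ or $> d$ and $\operatorname{gr}(P_*)$ has the same rank $d[E:L]$ split over $E$, Proposition~\ref{normal}(iii) (together with Proposition~\ref{semisi}) tells me $P_*$ is in fact split separable, so by Theorem~\ref{spli2}(iii) we have $\operatorname{gr}(P_*) = P(\mathbf{n},E,G)$ for a finite subgroup $G\subset\operatorname{Aut}(E)$ containing $H:=\operatorname{Gal}(E/L)$, with $\sum_{g\in G}n_g = d[E:L]$. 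Third, by Theorem~\ref{spli2}(vi), $\mathcal{Z}(\operatorname{gr}(P_*)) = E^G$; and by Theorem~\ref{spli2}(v), $\mathcal{Z}(P_*) = \mathcal{Z}(P)$. Since $P_*$ is semisimple, $\operatorname{gr}(P_*)\cong P_*$, so $\mathcal{Z}(P) = E^G$, and by Galois theory $[E:\mathcal{Z}(P)] = [E:E^G] = |G|$. Fourth, combining $[E:\mathcal{Z}(P)] = [E:L][L:\mathcal{Z}(P)]$ with $[E:L] = |H|$, I get $[L:\mathcal{Z}(P)] = |G|/|H| = [G:H]$, which is a finite integer; in particular $L/\mathcal{Z}(P)$ is finite.

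For the bound $[L:\mathcal{Z}(P)]\le d$: each coset $gH$ of $H$ in $G$ contributes to $\operatorname{gr}(P_*)$ the blocks $Eg'$ for $g'\in gH$ (with multiplicities $n_{g'}$), and tracing through the filtration of Theorem~\ref{spli2}, each $g'H$-worth of blocks arises from a single eigenvalue homomorphism $\lambda_i\colon L\to E$ restricted appropriately — more precisely, the $d[E:L]$ blocks of $\operatorname{gr}(P_*)$ are grouped into $d$ families of size $[E:L]=|H|$ (one family per eigenvalue $\lambda_i$ of $\phi_P$ on $P$, $i=1,\dots,d$), and each family lies in a single coset $g_iH$. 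Hence the number of distinct cosets $[G:H]$ is at most the number of eigenvalues $d$, giving $[L:\mathcal{Z}(P)] = [G:H]\le d$. When $P$ is Galois, Theorem~\ref{spli2}(ii) says $P_*$ is Galois, so by Proposition~\ref{p1}(i) the function $\mathbf{n}$ is constant, $n_g\equiv r$ with $r|G| = d[E:L] = d|H|$, i.e.\ $r[G:H] = d$, so $[L:\mathcal{Z}(P)] = [G:H]$ divides $d$.

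\emph{Main obstacle.} The delicate point is the clean accounting in the last paragraph: showing that the $d[E:L]$ one-dimensional blocks of $\operatorname{gr}(P_*)$ partition into exactly $d$ cosets-worth, i.e.\ that distinct eigenvalue homomorphisms of $\phi_P$ on $P$ can land in the same coset $g_iH$ but each coset is hit, and that the count of cosets is bounded by $d$ rather than by $d[E:L]$. Concretely one must verify that for each $i$, the block $E^{\lambda_i}\otimes_L E = Eg_i\otimes_L E$ has a filtration by $Eg_ih$, $h\in H$ (this is essentially in the proof of Theorem~\ref{spli2}(ii)), so the $[E:L]$ sub-blocks from one eigenvalue all lie in $g_iH$; then $G = \bigcup_i g_iH$ forces $[G:H]\le d$. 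I expect this bookkeeping, rather than any new idea, to be the crux.
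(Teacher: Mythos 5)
Your route is genuinely different from the paper's. The paper's proof is two lines: by Proposition \ref{minpol} every $a\in L$ satisfies $\mu_a(t)=0$ over ${\mathcal Z}(P)$, a polynomial of degree $\le d$ (dividing $d$ in the Galois case by Corollary \ref{chara}(i)); since $d<p$ (or ${\rm char}=0$) these elements are separable, and the Primitive Element Theorem then bounds $[L:{\mathcal Z}(P)]$ by the maximal elementwise degree. You instead pass to the splitting field $E$ of Theorem \ref{spli2} and count cosets: $[L:{\mathcal Z}(P)]=[G:H]\le d$, with divisibility from Proposition \ref{p1}(i). That global, group-theoretic picture is attractive and the bookkeeping you worried about ($G=\bigcup_{i=1}^d g_iH$, hence $[G:H]\le d$, and $r[G:H]=d$ in the Galois case) is indeed fine -- it is essentially contained in the proof of Theorem \ref{spli2}(ii),(iii).

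The genuine gap is elsewhere, in the step ``$P_*$ is split separable, hence $P_*\cong{\rm gr}(P_*)$ and ${\mathcal Z}(P)={\mathcal Z}(P_*)={\mathcal Z}({\rm gr}(P_*))=E^G$.'' You justify this by Proposition \ref{normal}(iii) and Proposition \ref{semisi}, but those statements require the characteristic to exceed the \emph{rank of the bimodule they are applied to}, which for $P_*$ is $d[E:L]$, not $d$; the hypothesis of Proposition \ref{algeb} only gives $p>d$, and $[E:L]$ is not bounded in terms of $d$. This is not a cosmetic point: without semisimplicity of $P_*$ one only gets ${\mathcal Z}(P)\subseteq E^G$ (compare Remark \ref{nonsem}, where ${\mathcal Z}({\rm gr}(P))$ is strictly larger than ${\mathcal Z}(P)$), so your equality $[E:{\mathcal Z}(P)]=|G|$, and with it finiteness, is unproved in the range $d<p\le d[E:L]$. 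Likewise your use of $[E:L]=|H|$ needs separability of $E/L$. The gap can be closed, but it takes an extra argument: each $a\in L$ has minimal polynomial $\mu_a$ over ${\mathcal Z}(P)$, irreducible of degree $\le d<p$ (proof of Proposition \ref{semisi}), hence separable; so $L/{\mathcal Z}(P)$ is separable and $E$, generated over $L$ by roots of the $\mu_a$, is separable over ${\mathcal Z}(P)$; then for every $z\in E$ the minimal polynomial of $\phi_{P_*}(z)$ is the (separable, irreducible) minimal polynomial of $z$ over ${\mathcal Z}(P_*)={\mathcal Z}(P)$, so the commutative algebra $E\phi_{P_*}(E)$ is reduced and $P_*$ is semisimple. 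Note that this repair is exactly the elementwise separability input that drives the paper's shorter proof, which avoids the splitting field altogether; in characteristic zero your argument goes through as written.
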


\begin{proof}
The result follows Proposition \ref{algeb1} and the Primitive Element
Theorem (\cite{L}), 
since by Proposition \ref{minpol} and Corollary \ref{chara}(i), 
every element of $L$ satisfies a polynomial equation over
${\mathcal Z}(P)$ of degree $\le d$, 
which has degree dividing $d$ if $P$ is Galois 
(namely, the equation $\mu_a(t)=0$). 
\end{proof}

\begin{corollary}\label{semisi1}
In characteristic zero or $p>d$, any weakly Galois bimodule 
of rank $d$ is semisimple as a
bimodule. 
\end{corollary}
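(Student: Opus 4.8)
The plan is to deduce this from the single fact that $L\otimes_F L$ is a semisimple ring, where $F:={\mathcal Z}(P)$ is the center of $P$. First I would apply Proposition \ref{algeb}: since ${\rm char}\,L=0$ or ${\rm char}\,L>d$, the field $L$ is a finite extension of $F$ with $[L:F]\le d$. Next I would observe that $L/F$ is separable: this is automatic in characteristic zero, and in characteristic $p>d$ the inseparable degree $[L:F]_i$ is a power of $p$ dividing $[L:F]\le d<p$, hence $[L:F]_i=1$.

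Now I would write $L=F[x]/(f(x))$ for a separable $f$, factor $f=\prod_j f_j$ over $L$ into pairwise coprime irreducibles (each separable, being a factor of a separable polynomial), and conclude that $L\otimes_F L\cong L[x]/(f(x))\cong\prod_j L[x]/(f_j(x))$ is a finite product of finite separable field extensions of $L$. Thus $L\otimes_F L$ is a semisimple Artinian ring, so every left $L\otimes_F L$-module is semisimple.

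Finally I would recall that, by the very definition of ${\mathcal Z}(P)$, the bimodule $P$ is linear over $F$, and that an $L$-bimodule linear over a subfield $F\subseteq L$ is the same thing as a left $L\otimes_F L^{\rm op}=L\otimes_F L$-module; hence $P$ is semisimple as an $L\otimes_F L$-module by the previous paragraph. To upgrade this to semisimplicity \emph{as an $L$-bimodule}, I would note that every sub-$L$-bimodule of $P$, and every $L$-bimodule quotient of $P$, is again linear over $F$; therefore the lattice of sub-$L$-bimodules of $P$ coincides with its lattice of $L\otimes_F L$-submodules, so $P$ is semisimple in one category exactly when it is in the other. I do not expect any genuine obstacle here — the corollary really is a corollary, with the only points needing a little care being the separability of $L/F$ in positive characteristic and this last comparison of submodule lattices. (One could instead argue via the splitting field $E$ of Theorem \ref{spli2}, using that $E\otimes_L P\otimes_L E$ is split by Theorem \ref{spli2}(ii) and hence separable and semisimple by Proposition \ref{normal}(iii) and Lemma \ref{fil}; but the hypothesis ${\rm char}\,L>d$ is not inherited by $E\otimes_L P\otimes_L E$, whose rank is $d[E:L]$, so the direct route through $L\otimes_F L$ is cleaner.)
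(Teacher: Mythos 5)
Your proposal is correct and follows essentially the same route as the paper: the paper's proof is exactly that $P$ is a module over $L\otimes_{{\mathcal Z}(P)}L$, which is finite dimensional and semisimple by Proposition \ref{algeb} (with the separability in characteristic $p>d$ left implicit). You have merely filled in the details — the inseparable-degree argument for separability of $L/{\mathcal Z}(P)$ and the identification of sub-bimodules with $L\otimes_{{\mathcal Z}(P)}L$-submodules — both of which are correct.
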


\begin{proof}
Any weakly Galois $L$-bimodule is a bimodule over the algebra 
$L\otimes_{{\mathcal Z}(P)}L$, which by Proposition \ref{algeb}
is a finite dimensional semisimple algebra. 
\end{proof} 

\subsection{Galois bimodules containing $L\otimes_{{\mathcal Z}(P)}L$.} 

\begin{proposition}\label{conta1}
Let $P$ be a bimodule over a field $L$, finite dimensional as a
left vector space, such that $P^N$ contains $L\otimes_{{\mathcal Z}(P)}L$
for some $N$. Then $L\otimes_{{\mathcal Z}(P)}L$
is in fact a direct summand in $P^N$, and moreover in $P$.
\end{proposition}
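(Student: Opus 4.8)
The plan is to exploit that $P$, being finite-dimensional as a left $L$-vector space and a module over $L \otimes_{\mathcal{Z}(P)} L$, lives in a category with enough structure to split idempotents — but the key point is that $L \otimes_{\mathcal{Z}(P)} L$ is itself a bimodule whose endomorphism ring is understood. First I would observe that $L\otimes_{\mathcal{Z}(P)}L$ acts on any $L$-bimodule linear over $\mathcal{Z}(P)$, and that $P$ is linear over $\mathcal{Z}(P)$ by the definition of the center (Remark after Definition \ref{...}). So $P$, $P^N$, and $L\otimes_{\mathcal{Z}(P)}L$ are all modules over $R := L\otimes_{\mathcal{Z}(P)}L$, and the inclusion $L\otimes_{\mathcal{Z}(P)}L \hookrightarrow P^N$ is a morphism of $R$-modules. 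The ring $R$ need not be semisimple (in bad characteristic, by Remark \ref{nonsem}), so I cannot simply quote semisimplicity; instead I would use that $L\otimes_{\mathcal{Z}(P)}L$ is a \emph{Frobenius algebra} over $L$ (acting through the left factor), or more directly that $\mathrm{Hom}_R(L\otimes_{\mathcal{Z}(P)}L, M) \cong M$ for any $R$-module $M$ where the iso is evaluation at the canonical generator $1\otimes 1$: indeed $1\otimes 1$ generates $L\otimes_{\mathcal{Z}(P)}L$ as a left $L$-module, and annihilating it forces annihilation of everything. This makes $L\otimes_{\mathcal{Z}(P)}L$ a projective $R$-module — in fact it is a direct summand of $R$ itself is false, but it \emph{is} cyclic and flat over $L$, which is enough.

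More carefully, I would argue as follows. Set $F = \mathcal{Z}(P)$, so $L/F$ is finite (we may invoke the embedding $F\hookrightarrow L$ and that $L\otimes_F L$ is Artinian). Write $e$ for the canonical element $1\otimes 1 \in L\otimes_F L$. Given a surjection $p : P^N \twoheadrightarrow L\otimes_F L$ of $L$-bimodules — wait, we only have an injection $\iota : L\otimes_F L \hookrightarrow P^N$; the goal is to split it. Choose any left $L$-linear projection $\pi_0 : P^N \to L\otimes_F L$ (possible since $\iota$ is split as left $L$-modules, both being $L$-vector spaces). The issue is bimodule-linearity. Here I would use an averaging / trace trick over the algebra $L\otimes_F L$: define $\pi = \sum_j (x_j \otimes 1)\,\pi_0(\,\cdot\,)\,(y_j\otimes 1)$ suitably — but the clean formulation is via the counit. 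Since $L/F$ is a finite extension there is a nonzero $F$-linear functional $\lambda : L \to F$, and $L\otimes_F L$ carries the "multiplication-comultiplication" structure making $\sum_i x_i \otimes y_i$ (a dual basis for $\lambda$) a separability-type idempotent when $L/F$ is separable, and a "relative Frobenius" element in general. Using $m : L\otimes_F L \to L$ and the coevaluation dual to $\lambda$, one builds a bimodule splitting $\pi$ of $\iota$ by $\pi(v) = \sum_i (x_i\otimes 1)\,\pi_0((1\otimes y_i)\,v)$ — a standard computation shows this is $L$-bilinear and restricts to the identity on $L\otimes_F L$. Thus $L\otimes_F L$ is a direct summand of $P^N$.

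To upgrade "summand of $P^N$" to "summand of $P$", I would use Krull–Schmidt for $L$-bimodules linear over $F$, i.e. finitely generated modules over the Artinian ring $R = L\otimes_F L$. The bimodule $L\otimes_F L$, as an $R$-module, is cyclic generated by $e$; decompose it into indecomposables $L\otimes_F L = \bigoplus_k Q_k^{m_k}$. Each $Q_k$ is then a summand of $P^N = \bigoplus (\text{copies of } P)$, so by Krull–Schmidt each indecomposable summand of $Q_k$ — hence $Q_k$ itself — is a summand of $P$. Therefore every $Q_k$ embeds as a direct summand of $P$; but I need all of $L\otimes_F L$, with multiplicities, to embed into a \emph{single} copy of $P$. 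This is exactly where the Galois hypothesis on $P$ should enter (the proposition as stated only says "weakly Galois"? — no, it says $P^N$ contains $L\otimes_F L$; I'd expect the surrounding context forces $P$ itself to contain a copy). The main obstacle is precisely this multiplicity-matching step: ensuring that the sum of the $Q_k^{m_k}$ fits inside one $P$ rather than being spread across the $N$ copies. I would handle it by showing $\dim_L(L\otimes_F L) = [L:F] \le d = \dim_L P$ (from Proposition \ref{algeb1} / Corollary \ref{chara}, giving $[L:F]\le d$, with the socle of $L\otimes_F L$ being a sum of trivial bimodules $L$ each of which — by Theorem \ref{spli2}(iv) applied after base change — already appears in $P$), and then argue inductively up the socle filtration of $L\otimes_F L$ that each layer can be lifted into the corresponding part of $P$, using the projectivity of $L\otimes_F L$ over $R$ established above together with the explicit description of composition factors of $P$ from Proposition \ref{p1} and Theorem \ref{spli2}(iii). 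The delicate combinatorial bookkeeping in that induction is the part I expect to require the most care.
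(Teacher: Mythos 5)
Your overall framework is the right one and matches the paper's (view everything as modules over $R:=L\otimes_{\mathcal Z(P)}L$, split the inclusion using a Frobenius-type property, then descend from $P^N$ to $P$ by Krull--Schmidt), but both of your concrete steps have genuine gaps. For the splitting step, what is needed is that $R$ be \emph{injective} as a module over itself, since we must split an injection $R\hookrightarrow P^N$; projectivity of $R$ over itself (which you emphasize) is a tautology and only splits surjections onto $R$. The paper's argument is exactly: $[L:\mathcal Z(P)]<\infty$ because $L\otimes_{\mathcal Z(P)}L$ sits inside the finite-dimensional left $L$-space $P^N$, so $R$ is a finite-dimensional commutative (hence symmetric) Frobenius algebra, hence self-injective, and the inclusion splits. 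Your explicit averaging formula $\pi(v)=\sum_i(x_i\otimes1)\,\pi_0((1\otimes y_i)v)$, with $\{x_i\},\{y_i\}$ dual bases of $L$ over $F=\mathcal Z(P)$, does not do this: on the image of the inclusion it is not the identity but multiplication by the Casimir element $c=\sum_i x_i\otimes y_i\in R$, which is never invertible when $[L:F]>1$ (for $L=\mathbb C$, $F=\mathbb R$, $\lambda=\mathrm{Re}$ one gets $c\mapsto(2,0)$ under $\mathbb C\otimes_{\mathbb R}\mathbb C\cong\mathbb C\times\mathbb C$; for $L=F(t^{1/p})$ in characteristic $p$, $c$ is nilpotent), and for a $\pi_0$ that is only left-$L$-linear the map $\pi$ need not be right-$L$-linear, because the Casimir identity only lets you move $b$ across $\sum_i x_i\otimes y_i$ when the same element sits to the right of it for every $i$. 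The fix is either to quote self-injectivity of Frobenius algebras, or to run the averaging with dual bases $\{u_i\},\{w_i\}$ of $R$ itself over $F$ for a Frobenius form $\Lambda$ of $R$ and an $F$-linear extension $\mu$ of $\Lambda$ to $P^N$, setting $\pi(v)=\sum_i u_i\,\mu(w_i\cdot v)$.

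For the descent from $P^N$ to $P$, the idea you are missing --- and it is the entire content of the paper's second sentence --- is that the decomposition of the commutative Artinian ring $R=L\otimes_F L$ into indecomposable summands (its local blocks $Re_k$, $e_k$ primitive idempotents) is \emph{multiplicity free}: distinct blocks are killed by complementary idempotents and so are pairwise non-isomorphic as $R$-modules. With that, Krull--Schmidt finishes in one line: each $Q_k=Re_k$ is an indecomposable summand of $P^N$, hence a summand of $P$, and since the $Q_k$ are pairwise non-isomorphic and each occurs exactly once in $R$, the whole of $R=\oplus_k Q_k$ is a summand of $P$. Your proposed substitute --- an induction up the socle filtration using a ``Galois hypothesis'' on $P$, the bound $[L:F]\le d$, and the claim that the socle of $L\otimes_F L$ consists of trivial bimodules --- is both unnecessary and unsound: the proposition assumes nothing Galois-like about $P$, and the simple constituents of $L\otimes_F L$ are generally nontrivial bimodules (already $\mathbb C\otimes_{\mathbb R}\mathbb C\cong L\oplus Lg$ with $g$ complex conjugation), so the bookkeeping you defer would not go through as described.
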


\begin{proof}
It is clear that $[L:{\mathcal Z}(P)]<\infty$. 
Thus, $L\otimes_{{\mathcal Z}(P)}L$ is a Frobenius algebra, 
so it is an injective module over itself. Hence 
the inclusion of $L\otimes_{{\mathcal Z}(P)}L$ 
into $P^N$ splits, so $L\otimes_{{\mathcal Z}(P)}L$
is a direct summand in $P^N$. 
Furthermore, $L\otimes_{{\mathcal{Z}}(P)}L$ is a finite
dimensional commutative algebra, so it has multiplicity free 
decomposition into indecomposable projective modules.
Hence $L\otimes_{{\mathcal{Z}}(P)}L$ is a direct summand in $P$, as
desired. 
\end{proof}

\begin{proposition}\label{conta2}
Suppose that $P$ be a Galois $L$-bimodule containing 
$L\otimes_{{\mathcal Z}(P)}L$ as a direct summand.
\footnote{It follows from Proposition \ref{conta1} and 
Proposition \ref{conta3} below that 
in fact any Galois $L$-bimodule has this property.} 
Then $P$ is a multiple of 
$L\otimes_{{\mathcal Z}(P)}L$.  
\end{proposition}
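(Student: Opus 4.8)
The plan is to forget everything except that $P$ is a finite-length module over the finite-dimensional commutative algebra $R:=L\otimes_F L$, where $F:={\mathcal Z}(P)$; here I identify $L$-bimodules linear over $F$ with left $R$-modules. Note first that $n:=[L:F]$ is finite, since $L\otimes_F L$, being a direct summand of the $d$-dimensional left $L$-vector space $P$, has $[L:F]=\dim_L(L\otimes_F L)\le d$; hence $R$ is finite-dimensional over $F$ and $P$ has finite length over $R$. I would then write $P\cong R^{\oplus c}\oplus P'$, where $c\ge 1$ is the largest (necessarily finite) integer such that $R^{\oplus c}$ is a direct summand of $P$, so that $R$ is \emph{not} a direct summand of $P'$; put $e:=\dim_L P'=\dim P'_L$ (the two agree because $P$ and $R$ each have equal left and right $L$-dimensions, $d$ and $n$), so $d=cn+e$. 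The goal is to prove $P'=0$, since this yields $P\cong R^{\oplus c}=(L\otimes_{{\mathcal Z}(P)}L)^{\oplus c}$, a multiple of $L\otimes_{{\mathcal Z}(P)}L$ (and incidentally $n\mid d$).

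The crux is a collapsing identity, which I would establish first: for every $R$-module $M$,
$$
R\otimes_L M\cong R^{\oplus\dim M_L},\qquad M\otimes_L R\cong R^{\oplus\dim_L M}
$$
as $R$-modules. Indeed $R\otimes_L M=(L\otimes_F L)\otimes_L M\cong L\otimes_F M$ by base change along $F\hookrightarrow L$ (the left-hand factor $L$ of $R$ becomes a free factor and $M$ contributes only its right $L$-module structure); since $M$ is free of dimension $\dim M_L$ as a right $L$-vector space, this is $L\otimes_F L^{\oplus\dim M_L}=R^{\oplus\dim M_L}$, the isomorphism being $R$-linear since $V\mapsto L\otimes_F V$ is an additive functor from right $L$-modules to $R$-modules carrying $L$ to $R$; the second identity is symmetric. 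In particular $R\otimes_L R\cong R^{\oplus n}$. I would then expand the Galois isomorphism $P\otimes_L P\cong P^{\oplus d}$ using $P\cong R^{\oplus c}\oplus P'$: among the four resulting summands of $P\otimes_L P$, the three containing a factor of $R$ are free $R$-modules of ranks $nc^2$, $ce$, $ce$ and the fourth is $P'\otimes_L P'$; as $P^{\oplus d}\cong R^{\oplus cd}\oplus(P')^{\oplus d}$ and $cd=c(cn+e)=nc^2+ce$, cancelling the common free summand $R^{\oplus cd}$ (Krull--Schmidt) leaves
$$
R^{\oplus ce}\oplus(P'\otimes_L P')\cong(P')^{\oplus d}.
$$

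To finish, suppose for contradiction $P'\ne 0$. Then $e\ge 1$, so $ce\ge 1$ and the last display exhibits $R$ as a direct summand of $(P')^{\oplus d}$. Decomposing $R=\bigoplus_\alpha R_\alpha$ into its local components --- which are indecomposable and pairwise non-isomorphic as $R$-modules --- Krull--Schmidt forces each $R_\alpha$, being an indecomposable summand of $(P')^{\oplus d}$, to be a summand of $P'$, and pairwise non-isomorphism then makes $R=\bigoplus_\alpha R_\alpha$ itself a direct summand of $P'$, contradicting the maximality of $c$. Hence $P'=0$ and $P\cong R^{\oplus c}$. I do not anticipate a serious obstacle: the argument turns on spotting the collapsing identity $R\otimes_L M\cong R^{\oplus\dim M_L}$, which trivializes every term of the Galois equation except $P'\otimes_L P'$, and on choosing the invariant $c$ so that the bookkeeping closes up. A pleasant feature is that it is characteristic-free and never uses semisimplicity of $L\otimes_F L$, thereby sidestepping the inseparability and non-splitting phenomena (cf.\ Remark \ref{nonsem}, Proposition \ref{spli2}) that complicate an approach via the splitting field.
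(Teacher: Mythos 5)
Your proof is correct and is essentially the paper's own argument: the same decomposition $P\cong (L\otimes_F L)^{\oplus c}\oplus P'$ with $F={\mathcal Z}(P)$ (using that $P$ is linear over its center), the same collapsing identity $(L\otimes_F L)\otimes_L M\cong (L\otimes_F L)^{\oplus \dim M_L}$, and the same appeal to the multiplicity-free decomposition of the commutative Artinian algebra $L\otimes_F L$ into pairwise non-isomorphic indecomposable summands together with Krull--Schmidt. The only difference is cosmetic bookkeeping at the end: the paper compares multiplicities of $L\otimes_F L$ on both sides of $P\otimes_L P\cong P^{\oplus d}$ to get $cn\ge d$ and hence $P'=0$, while you cancel the common free summand and contradict the maximality of $c$.
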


\begin{proof}
Let $[L:{\mathcal Z}(P)]=m<\infty$.
We have $P=(L\otimes_{{\mathcal Z}(P)}L)^r\oplus M$, where 
$r\ge 1$, and $M$ does not have direct summands of the form 
$L\otimes_{{\mathcal Z}(P)}L$. We have $\dim M=d-rm$ as a left
and a right vector space. Since $P\otimes_L P\cong P^d$, 
and since 
$$
L\otimes_{{\mathcal Z}(P)}L\otimes_L M\cong 
M\otimes_L L\otimes_{{\mathcal Z}(P)}L\cong 
(L\otimes_{{\mathcal Z}(P)}L)^{d-rm},
$$
we have 
$$
P\otimes_L P\cong (L\otimes_{{\mathcal Z}(P)}L)^{r^2m+2r(d-rm)}\oplus
M\otimes_L M\cong P^d\cong 
(L\otimes_{{\mathcal Z}(P)}L)^{rd}\oplus M^d.
$$
Since $L\otimes_{{\mathcal Z}(P)}L$ is a commutative
algebra and hence has a multiplicity free decomposition into
projective modules over itself, $M^d$ does not contain
$L\otimes_{{\mathcal Z}(P)}L$ as a direct summand.
This implies that $rd\ge r^2m+2r(d-rm)$, i.e., $r^2m\ge rd$, or
$rm\ge d$ (as $r\ge 1$). 
Hence $M=0$, which implies the result.\footnote{This proof is based on the idea of ``projectivity defect'', 
\cite{EO}, Section 2.5.}
\end{proof}

\subsection{Purely inseparable weakly Galois bimodules} 

\begin{definition} A split 
weakly Galois $L$-bimodule 
$P$ of rank $d$ is said to be {\it purely inseparable} 
if ${\rm gr}(P)=L^d$. 
\end{definition} 

Let $P$ be a split weakly 
Galois $L$-bimodule of rank $d$. Then ${\rm gr}(P)$ is 
separable, so by Proposition \ref{p1}, 
${\rm gr}(P)=P({\bold n},L,G)=\oplus_{g\in G}(Lg)^{n_g}$, where
$G\subset {\rm Aut}(L)$ is a finite subgroup and $n_g>0$. 
Let $F=L^G$.

\begin{lemma}\label{restri}
(i) The $L$-bimodule $L\otimes_F P\otimes_F L$ 
embeds into $P^N$ for some $N$. 

(ii) The restriction of $P$ to $F$ is a purely inseparable 
weakly Galois $F$-bimodule 
of rank $d|G|$. 
\end{lemma}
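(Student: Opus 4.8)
The plan is to establish both parts by carefully tracking what happens to the filtration of $P$ and its successive quotients when we restrict scalars to $F=L^G$ and then tensor back up. First I would note that since ${\rm gr}(P)=P(\mathbf{n},L,G)$ with $G\subset{\rm Aut}(L)$ finite and $F=L^G$, the extension $L/F$ is a finite normal (indeed Galois, since $G$ acts faithfully) extension with ${\rm Gal}(L/F)=G$ and $[L:F]=|G|$. For part (ii), restriction of scalars along $F\hookrightarrow L$ turns a left $L$-vector space of dimension $d$ into a left $F$-vector space of dimension $d|G|$, and likewise on the right, so $P|_F$ is free of rank $d|G|$ on each side; I then need $(P|_F)\otimes_F(P|_F)$ to sit inside a multiple of $P|_F$ as an $F$-bimodule. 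The key identity is that for an $L$-bimodule $M$, the $F$-bimodule $M|_F$ tensored over $F$ with $L$ on each side recovers, up to multiplicity $|G|$, the $L$-bimodule $L\otimes_F M\otimes_F L$; more precisely $(M|_F)\otimes_F(M|_F)$, after inducing up, relates to $M\otimes_L M$ via the $G$-action. So part (i) is really the engine for part (ii), and I would prove (i) first.

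For part (i), the strategy is to reduce to the separable associated graded and use that $L\otimes_F(-)\otimes_F L$ is exact. The bimodule $P$ has a finite filtration (the socle filtration, or the one from Lemma \ref{fil}) whose successive quotients, after passing to ${\rm gr}$, are the various $Lg$ with $g\in G$, each appearing $n_g>0$ times. Applying the exact functor $L\otimes_F(-)\otimes_F L$ to this filtration, I get a filtration of $L\otimes_F P\otimes_F L$ whose successive quotients are $L\otimes_F Lg\otimes_F L\cong L\otimes_F L\otimes_F L$ (using that $Lg\cong L$ as an $F$-bimodule since $g$ fixes $F$), which is a multiple of $L\otimes_F L$. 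Since $L/F$ is Galois with group $G$, we have $L\otimes_F L\cong P(L,G)=\oplus_{h\in G}Lh$. Hence every successive quotient of $L\otimes_F P\otimes_F L$ is a sum of copies of the $Lh$, $h\in G$ — in particular it is a split separable $L$-bimodule all of whose composition factors are among the $Lh$, $h\in G$. On the other hand $P$ itself contains a copy of $L$ (Theorem \ref{spli2}(iv), or directly since $1\in G$ and $n_1>0$), and more usefully, $P$ as a split weakly Galois bimodule is (by its weakly Galois property) such that $P^{N}$ contains $L\otimes_{{\mathcal Z}(P)}L=L\otimes_F L$ for suitable $N$ by an argument along the lines of Proposition \ref{conta1}; combining, I claim $L\otimes_F P\otimes_F L$, having all composition factors among the $Lh$ and being (a summand-controlled piece inside) something built from $P$, embeds into a power $P^N$. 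The cleanest route: each $Lh$ with $h\in G$ is a composition factor of ${\rm gr}(P)$, and since ${\rm gr}(P)=P(\mathbf n,L,G)$ with all $n_g>0$, every $Lh$ in fact occurs as a sub or quotient bimodule of $P$ up to the filtration; using that $L\otimes_F L=\oplus_{h\in G}Lh$ is a direct summand of $P^{|G|}$ (or of $P^N$) by the Frobenius/injectivity argument of Proposition \ref{conta1}, and that $L\otimes_F P\otimes_F L$ is filtered by copies of $L\otimes_F L$, one concludes $L\otimes_F P\otimes_F L$ embeds into a power of $P$.

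With (i) in hand, part (ii) follows formally: $(P|_F)\otimes_F(P|_F)$ is an $F$-bimodule, and tensoring on both sides with $L$ over $F$ and using that $L$ is a free $F$-module of finite rank, the $F$-bimodule $(P|_F)\otimes_F(P|_F)$ is a sub-bimodule (a $G$-isotypic summand, by Galois descent for the $G$-action) of $\big(L\otimes_F P\otimes_F L\big)|_F$ up to multiplicity — concretely, $L\otimes_F(P|_F)\otimes_F(P|_F)\otimes_F L$ contains $L\otimes_F P\otimes_F P\otimes_F L$, which surjects onto (a multiple of) $L\otimes_F P\otimes_F L\otimes_L L\otimes_F P\otimes_F L$, living inside a power of $P|_F$ after restricting and using (i). I would then verify that ${\rm gr}(P|_F)=F^{d|G|}$, i.e. that $P|_F$ is purely inseparable: its composition factors as an $F$-bimodule are the $F$-bimodule composition factors of the $Lg$, and $Lg|_F\cong L|_F$ (since $g$ fixes $F$) has composition series with every factor isomorphic to $F$ — because $L/F$, being a finite Galois extension viewed as an $F$-bimodule via $a\circ x\circ b=axb$, has ${\rm gr}(L)=F^{[L:F]}$ (all eigenvalues of multiplication operators on $L$ lie in $L$, and the bimodule $L$ over $F$ has center $L$, hence composition factors $F$). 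Summing multiplicities gives rank $d|G|$ with ${\rm gr}=F^{d|G|}$, as claimed.

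The main obstacle I anticipate is part (i): making rigorous the passage from "all composition factors of $L\otimes_F P\otimes_F L$ lie among the $Lh$, $h\in G$" to an actual \emph{embedding} into a power of $P$, rather than merely a statement about composition factors. The resolution should go through the injectivity/Frobenius property of the split separable bimodule $\oplus_h Lh=L\otimes_F L$ over the semisimple algebra $L\otimes_F L$ (as in Proposition \ref{conta1}): since $L\otimes_F P\otimes_F L$ is filtered with subquotients that are multiples of the injective–projective $L\otimes_F L$, it is itself a multiple of $L\otimes_F L$, hence embeds (as a summand) into a suitable power of anything containing $L\otimes_F L$ as a summand — and $P^N$ does contain $L\otimes_F L$ as a summand because $P$ is weakly Galois with ${\mathcal Z}(P)=F$. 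One must be a little careful that $L\otimes_F L$ is semisimple, which needs $L/F$ separable; here $L/F$ is Galois hence separable, so this is fine.
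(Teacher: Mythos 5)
Your proposal does not close the key step, and two of its ingredients are actually incorrect in the characteristic-$p$ setting this lemma is designed for. The engine you need for (i) is the containment of $L\otimes_F L=\oplus_{g\in G}Lg$ inside (a power of) $P$, and you try to get it from Proposition \ref{conta1} together with the identification ${\mathcal Z}(P)=F$. But ${\mathcal Z}(P)$ need not equal $F=L^G$: for a split weakly Galois bimodule in characteristic $p$ the center can be a proper (purely inseparable) subfield of $L^G$ --- e.g.\ $P=L\otimes_{\mathcal Z}L$ with $L/{\mathcal Z}$ purely inseparable of degree $p$ has $G=\{1\}$, $F=L$, ${\mathcal Z}(P)={\mathcal Z}\subsetneq F$. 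Moreover Proposition \ref{conta1} takes the containment $L\otimes_{{\mathcal Z}(P)}L\subset P^N$ as a \emph{hypothesis}; the statement that this containment always holds is Proposition \ref{conta3}, which comes later and whose proof uses Lemma \ref{restri} (via Theorem \ref{insep}), so invoking it here is circular. The paper instead proves the needed containment directly: for every $g\in G$ one picks an eigenvector $v_g\in P$ with $\phi(a)v_g=g(a)v_g$ for all $a\in L$ (equivalently, each $Lg$ occurs in the socle of $P$, which holds because extensions between the non-isomorphic simples $Lg$, $Lh$ split, so the $g$-block of $P$ is nonzero for every $g\in G$); then $L\otimes_F L=\oplus_{g}Lv_g\subset P$, hence $L\otimes_F P=L\otimes_F L\otimes_L P\subset P\otimes_L P\subset P^n$ and $L\otimes_F P\otimes_F L\subset P^{n^2}$.

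Second, your claim that $L\otimes_F P\otimes_F L$, being filtered with subquotients that are multiples of $L\otimes_F L$, ``is itself a multiple of $L\otimes_F L$'' is false in characteristic $p$: self-extensions of a fixed $Lh$ need not split (this is exactly the purely inseparable phenomenon this section is about), so a filtration with semisimple layers does not force semisimplicity. In the example above, $L\otimes_F P\otimes_F L=P=L\otimes_{\mathcal Z}L$ is indecomposable of length $p$, not a multiple of $L\otimes_F L=L$. Finally, your deduction of (ii) from (i) via ``contains \dots\ surjects onto \dots\ living inside'' does not produce the containment that the weakly Galois condition requires; the correct (and short) argument is $P\otimes_F P=P\otimes_L(L\otimes_F P)\subset P\otimes_L P^{n}\subset P^{n^2}$, using the intermediate containment $L\otimes_F P\subset P^n$ obtained in the proof of (i). After that, the count of $F$-dimensions ($d|G|$ on each side) and the observation that each $Lg|_F$ is a multiple of the trivial bimodule $F$ (since $g$ fixes $F$ and left and right $F$-actions then coincide) give splitness and ${\rm gr}(P|_F)=F^{d|G|}$; that last verification in your write-up is essentially fine.
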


\begin{proof}
(i) One has $L\otimes_F L=\oplus_{g\in G}Lg$. 
Now, for each $g\in G$ pick an eigenvector $v_g\in P$ of
the operators $\phi(a)$ with eigenvalues $g(a)$. 
Then $\oplus_{g\in G}Lv_g=\oplus_{g\in G}Lg=L\otimes_F L$. 
Thus, $L\otimes_F L\subset P$, 
so $L\otimes_F P=L\otimes_F L\otimes_L
P\subset P\otimes_L P\subset P^n$, and similarly $L\otimes_F
P\otimes_F L\subset P^{n^2}$.

(ii) We have $P\otimes_F P=P\otimes_L L\otimes_F P\subset P\otimes_L P^n
\subset P^{n^2}$. This implies that $P|_F$ 
is a weakly Galois
$F$-bimodule of rank $d|G|$. 
Moreover, $P|_F$ is split, 
and ${\rm gr}(P|_F)=L^d=F^{d|G|}$, as desired.
\end{proof}

\subsection{The restricted Lie algebra of derivations 
attached to a purely inseparable weakly Galois $L$-bimodule}

This and the next subsection are devoted to studying the
structure of purely inseparable weakly Galois bimodules. 
This is nontrivial only in characteristic $p>0$, as in
characteristic zero by Proposition \ref{semisi}, 
all purely inseparable weakly Galois
$L$-bimodules are multiples of $L$. 

Let ${\rm char}L=p>0$. 
Let $M$ be an extension of the trivial $L$-bimodule $L$ by
itself.  Such extensions are classified by ${\rm Ext}^1_{\rm
L-bimod}(L,L)={\rm Der}(L)$, the space of derivations of $L$.
For a derivation $D$ of $L$, denote by $M(D)$ the corresponding
bimodule. Namely, $M(D)=L[x]/x^2$, and the left action of $L$ is
as usual, while the right action is given by $v\circ
a=v(a+xD(a))$ (such bimodules appear in \cite{J} and are called
``self-representations of fields'').  It is easy to see that
the bimodules $M(D_1)$ and $M(D_2)$ are isomorphic if and only
if there exists $a\in L^\times$ such that $aD_1=D_2$.

Now let $P$ be a purely inseparable weakly Galois $L$-bimodule. 
Let us say that a derivation $D: L\to L$ 
is $P$-compatible if $M(D)$ is a subbimodule of $P$
(or, equivalently, of some multiple of $P$, as $M(D)$ has length $2$). 
Let ${\mathcal D}(P)$ be the set of all $P$-compatible derivations. Clearly, it is a vector space 
over $L$ under left multiplication. 

\begin{proposition}\label{jac}
(i) ${\mathcal D}(P)$ is finite dimensional over $L$. 
    
(ii) ${\mathcal D}(P)$ is closed under commutator.

(iii) ${\mathcal D}(P)$ is closed under taking $p$-th powers. 
Thus, ${\mathcal D}(P)$ is a finite-dimensional restricted 
$L$-Lie ring of derivations of $L$ in the sense of Jacobson \cite{J}. 
\end{proposition}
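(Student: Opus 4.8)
The plan is to characterize $P$-compatibility of a derivation concretely in terms of common eigenvectors, and then exploit the algebra structure of the ring generated by the right action operators. Fix a left basis of $P$ in which all $\phi_P(a)$ are upper triangular with all diagonal entries equal to the identity embedding $a\mapsto a$ (possible since $\mathrm{gr}(P)=L^d$, so $P$ is split with trivial successive quotients). Then for each $a\in L$ we may write $\phi_P(a)=a\cdot\mathrm{Id}+N(a)$ where $N\colon L\to \mathrm{Mat}_d(L)$ is the ``nilpotent part''; from the fact that $\phi_P$ is a ring homomorphism one computes $N(ab)=aN(b)+bN(a)+N(a)N(b)$, so the ``linear term'' of $N$ is a matrix-valued derivation. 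The key observation is that a derivation $D$ of $L$ is $P$-compatible precisely when $M(D)$ occurs as a subbimodule, which after restricting to the socle filtration amounts to: there is a pair of basis-related vectors $v,w\in P$ (some $L$-combination of basis vectors) with $\phi_P(a)w=aw$ and $\phi_P(a)v=av+D(a)w$ for all $a\in L$. In other words, $\mathcal D(P)$ is, up to scaling by $L^\times$, the set of derivations realized by length-$2$ ``Jordan strings'' for the commuting-family action of $\phi_P(L)$ on $P$ (tensored up as needed).

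First I would prove (i): since $P$ has rank $d$, a $P$-compatible $D$ is realized inside $P$ itself (as noted, $M(D)$ has length $2$ so occurring in a multiple of $P$ forces occurring in $P$), and the possible pairs $(v,w)$ up to the relevant equivalence live in a fixed finite-dimensional space; more cleanly, the assignment sending a socle-length-$2$ subbimodule to its extension class gives an $L$-linear injection of $\mathcal D(P)$ into a subquotient of $\mathrm{End}_L(P)\cong\mathrm{Mat}_d(L)$, which is finite dimensional over $L$, so $\dim_L\mathcal D(P)\le d^2$. For (ii), I would argue via tensor products: if $D_1,D_2\in\mathcal D(P)$, then $M(D_i)\hookrightarrow P$, so $M(D_1)\otimes_L M(D_2)$ embeds into $P\otimes_L P$, which by the weakly Galois hypothesis embeds into $P^N$. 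Now $M(D_1)\otimes_L M(D_2)$ is a length-$4$ bimodule whose structure one writes out explicitly on $L[x_1,x_2]/(x_1^2,x_2^2)$; its subquotient on $\mathrm{span}(x_1x_2,\ 1)$-type vectors, or more precisely an appropriate length-$2$ subquotient, realizes the derivation $D_1D_2$ (and $D_2D_1$), and taking the difference of two such subquotients — both of which embed into $P^N$ — exhibits $[D_1,D_2]$ as $P$-compatible. (The point is that any length-$2$ subbimodule of $P^N$ with trivial quotients is already a subbimodule of $P$, by the multiplicity-free/socle argument used in Proposition \ref{conta1}, so ``compatible with $P^N$'' equals ``compatible with $P$''.) For (iii), the analogous input is $M(D)^{\otimes p}\hookrightarrow P^{\otimes p}\hookrightarrow P^{N^{p-1}}$; the standard computation (this is exactly Jacobson's observation underlying the $p$-Lie-ring structure, \cite{J}) shows that $L[x]/x^2$ iterated $p$ times contains, as a length-$2$ subquotient, the bimodule $M(D^{[p]})$ where $D^{[p]}=D^p$ is the $p$-th power derivation — the cross terms cancel in characteristic $p$ — so $D^p$ is $P$-compatible. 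Combining (i)–(iii) gives that $\mathcal D(P)$ is a finite-dimensional restricted Lie ring of derivations in the sense of \cite{J}, which is the final assertion.

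I expect the main obstacle to be the bookkeeping in (ii) and (iii): namely verifying carefully that the relevant length-$2$ subquotient of $M(D_1)\otimes_L M(D_2)$ (resp. of $M(D)^{\otimes p}$) genuinely is $M([D_1,D_2])$ (resp. $M(D^{[p]})$) and not merely an extension with some other derivation, and that passing from ``subquotient of $P^N$'' back to ``subbimodule of $P$'' is legitimate. The cleanest way to handle the latter is to record, as a preliminary remark right after the definition of $\mathcal D(P)$, that a bimodule of length $2$ with both composition factors equal to $L$ embeds in $P$ iff it embeds in $P^N$ for some $N$ — this follows from Proposition \ref{conta1}-style reasoning once one notes that such a bimodule is either $L^2$ or indecomposable, and an indecomposable length-$2$ module embedding in $P^N$ must embed in one summand $P$. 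With that lemma in hand, (ii) and (iii) reduce to pure linear algebra over $L$ inside $M(D_1)\otimes_L M(D_2)$ and $M(D)^{\otimes p}$ respectively, and the computations are the classical ones from the theory of restricted Lie rings of derivations.
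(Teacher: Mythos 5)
Part (i) of your proposal is essentially the paper's argument (finite dimensionality read off from the second layer of the socle filtration via the extension-class map), but parts (ii) and (iii) have genuine gaps. For (ii), the twofold tensor product cannot work. Write $T=M(D_1)\otimes_L M(D_2)\cong L[x_1,x_2]/(x_1^2,x_2^2)$ with right action $v\circ a=v\bigl(a+x_1D_1a+x_2D_2a+x_1x_2D_1D_2a\bigr)$. First, $D_1D_2$ is not a derivation, so there is no bimodule ``$M(D_1D_2)$'' for a subquotient to realize. Second, if you classify the length-two subquotients of $T$ with both composition factors trivial, the cross term $D_1D_2a$ is attached to $x_1x_2$, which spans the socle and is necessarily killed (or central) in any such subquotient; what survives is always a derivation in $LD_1+LD_2$, never $[D_1,D_2]$ (which in general does not lie in $LD_1+LD_2$). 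Finally, ``taking the difference of two subquotients'' is not an operation on bimodules, and it cannot be replaced by adding extension classes, since the classes you can extract from $T$ are only $L$-combinations of $D_1,D_2$. This is exactly why the paper passes to the fourfold product $M(X)\otimes_LM(Y)\otimes_LM(X)\otimes_LM(Y)$ and exhibits the span of $v_1=x_1x_2y_1y_2$ and $v_2=(x_1-x_2)(y_1-y_2)$ as an honest subbimodule: the first-order terms cancel there, leaving $v_2\circ a=v_2a+v_1[X,Y]a$, and then only the easy observation that a length-two bimodule contained in a multiple of $P$ is contained in $P$ is needed.

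For (iii), your computation is correct that $M(D^p)$ occurs as a length-two \emph{subquotient} of $M(D)^{\otimes p}$ (the span $S$ of the $e_j$ is a subbimodule, $R=\oplus_{j=1}^{p-1}Le_j$ is a subbimodule because $\binom{p}{i}\equiv 0\pmod p$, and $S/R\cong M(D^p)$), but $P$-compatibility is defined by $M(D^p)$ being a \emph{subbimodule} of (a multiple of) $P$. Your bridging lemma only converts ``subbimodule of $P^N$'' into ``subbimodule of $P$''; it says nothing about subquotients, and a quotient of a subbimodule of $P^N$ need not embed into any multiple of $P$. The paper's use of the $(2p-1)$-st tensor power is precisely the device that removes this problem: there $v_1=e_{2p-1}$ and $v_2=e_{p-1}$ span a genuine subbimodule isomorphic to $M(D^p)$, because $e_{p-1}e_j=\binom{p-1+j}{j}e_{p-1+j}\equiv 0$ for $1\le j\le p-1$ while $e_{p-1}e_p=e_{2p-1}$ modulo $p$. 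So to repair your argument you should replace the twofold and $p$-fold powers by constructions realizing $M([D_1,D_2])$ and $M(D^p)$ as subbimodules of a tensor power of $P$; the fourfold and $(2p-1)$-fold powers used in the paper do exactly this, and the rest of your outline then goes through.
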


\begin{proof}
  (i) Consider the socle filtration of $P$: ${\bold F}_1P\subset
  {\bold F}_2P\subset...$, where ${\bold F}_1P$ is the maximal
  semisimple subbimodule, ${\bold F}_2P/{\bold F}_1P$ is the
  maximal semisimple subbimodule of $P/{\bold F}_1P$, etc. Then
  any inclusion of $M(D)$ into $P$ is actually an inclusion into
  ${\bold F}_2P$. Now, the bimodule ${\bold F}_2P$ is an
  extension of $V_2={\bold F}_2P/{\bold F}_1P\cong L^{m_2}$ by
  $V_1={\bold F}_1P\cong L^{m_1}$. Thus, it defines a linear map
  $\xi: {\rm Hom}(V_1,V_2)\to {\rm Ext}^1_{\rm L-bimod}(L,L)={\rm
    Der}(L)$.  It is clear that $M(D)$ is contained in $P$ if and
  only if $D\in {\rm Im}(\xi)$. Thus, ${\mathcal D}(P)={\rm
    Im}(\xi)$, so ${\mathcal D}(P)$ is finite dimensional.

  (ii) Let $X,Y\in {\rm Der}(L)$. Consider the tensor product
  $M(X)\otimes_L M(Y)\otimes_L M(X)\otimes_L M(Y)$.  This product
  is isomorphic to
  $L[x_1,y_1,x_2,y_2]/(x_1^2=y_1^2=x_2^2=y_2^2=0)$ with the usual
  left action of $L$, and right action given by
$$
v\circ a=v\cdot ({\rm Id}+x_1X)({\rm Id}+y_1Y)({\rm Id}+x_2X)({\rm Id}+y_2Y)(a)=
$$
$$
v(a+(x_1+x_2)Xa+(y_1+y_2)Ya+(x_1+x_2)(y_1+y_2)XYa+x_2y_1[Y,X]a+...).
$$
Now consider the 2-dimensional subspace $N$ in this product spanned by $v_1=x_1x_2y_1y_2$ and $v_2=(x_1-x_2)(y_1-y_2)$. 
Then from the last formula we get $v_1\circ a=v_1a$, and 
$v_2\circ a=v_2a+v_1[X,Y]a$. Thus, $N=M([X,Y])$. 

Now assume that $X,Y\in {\mathcal D}(P)$. Then 
$M(X)\otimes_L M(Y)\otimes_L M(X)\otimes_L M(Y)$ and hence $N$ is contained 
in $P^{\otimes 4}$, which is contained in a multiple of
$P$, since $P$ is a weakly 
Galois bimodule. Hence $N=M([X,Y])$ is contained in $P$, so $[X,Y]\in {\mathcal D}(P)$, as desired. 

(iii) Consider the tensor power $M(D)^{\otimes {2p-1}}$. 
This is the algebra 
$$
L[x_1,...,x_{2p-1}]/(x_i^2=0, i=1,...,2p-1) 
$$
with the usual left action of $L$ and right action given by 
$$
v\circ a=v\prod_{i=1}^{2p-1} ({\rm Id}+x_iD)(a)=v\sum_{j=0}^{2p-1}
e_j(x_1,...,x_{2p-1})D^j(a), 
$$
where $e_j$ are the elementary symmetric functions. Consider the vectors 
$v_1=e_{2p-1}$ and $v_2=e_{p-1}$. 
Since $\sum_{i=0}^{2p-1} e_it^i=\prod_{k=1}^{2p-1} 
(1+tx_k)$, we have 
$$
e_ie_j=\binom{i+j}{j}e_{i+j}, 
$$
so $e_{p-1}e_j=\binom{p-1+j}{j}e_{p-1+j}=\frac{p(p+1)...(p+j-1)}{j!}e_{p-1+j}$. 
This means that $e_{p-1}e_j=0$ for $j=0,...,p-1$, but $e_{p-1}e_p=e_{2p-1}$. 
So we get $v_1\circ a=v_1a$ and $v_2\circ a=v_2a+v_1D^pa$. 
Thus, $M(D^p)$ is contained in $M(D)^{\otimes 2p-1}$.
Thus, if $M(D)$ is contained in $P$, then $M(D^p)$ is contained in 
$P^{\otimes 2p-1}$, which is a multiple of $P$. 
Thus, $M(D^p)$ is contained in $P$ and hence $D^p\in {\mathcal D}(P)$, 
as desired.   
\end{proof} 

\subsection{The containment theorem for 
purely inseparable weakly Galois $L$-bimodules}

\begin{theorem}\label{insep}
Let $P$ be a purely inseparable weakly Galois $L$-bimodule of rank $d$. 
Then $L$ is a finite purely inseparable extension of ${\mathcal Z}(P)$.  
Moreover, $P$ contains the bimodule $L\otimes_{{\mathcal Z}(P)}L$.
\end{theorem}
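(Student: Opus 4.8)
\emph{Proof proposal.}
The plan is to reduce the statement to Jacobson's Galois theory of purely inseparable field extensions, via the restricted Lie ring ${\mathcal D}(P)$ of $P$-compatible derivations. We may assume ${\rm char}\,L=p>0$; in characteristic zero $P$ is a multiple of $L$, ${\mathcal Z}(P)=L$, and there is nothing to prove. Three preliminary facts. First, since $P$ is purely inseparable and split, $\phi_P(a)$ is upper triangular with all diagonal entries equal to $a$ in a suitable basis, so $N(a):=\phi_P(a)-a\cdot{\rm Id}$ is nilpotent and the minimal polynomial of $\phi_P(a)$ over $L$ is $(t-a)^{d_a}$ with $d_a\le d$; by Proposition \ref{minpol} its coefficients lie in ${\mathcal Z}(P)$, and writing $d_a=p^s m$ with $p\nmid m$ and reading off the coefficient of $t^{d_a-p^s}$ gives $a^{p^s}\in{\mathcal Z}(P)$ with $p^s\le d$, so $L^{p^K}\subseteq{\mathcal Z}(P)$ for $K=\lceil\log_p d\rceil$; in particular $L/{\mathcal Z}(P)$ is purely inseparable. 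Second, if $a\in{\mathcal Z}(P)$ and $D\in{\mathcal D}(P)$, then $M(D)$ embeds into a multiple of $P$, where the right and left actions of $a$ coincide; restricting to $M(D)$ forces $D(a)=0$, so ${\mathcal Z}(P)\subseteq F:=L^{{\mathcal D}(P)}$. Third, ${\mathcal D}(P)$ is a finite-dimensional restricted $L$-Lie ring of derivations (Proposition \ref{jac}), so by Jacobson's Galois correspondence $F$ is a field with $L^p\subseteq F$, $[L:F]=p^{\dim_L{\mathcal D}(P)}<\infty$, and ${\rm Der}(L/F)={\mathcal D}(P)$.

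The first assertion will follow once we prove the reverse inclusion $F\subseteq{\mathcal Z}(P)$, i.e. $\phi_P(a)=a\cdot{\rm Id}$ whenever $D(a)=0$ for all $D\in{\mathcal D}(P)$; then ${\mathcal Z}(P)=F$ and $[L:{\mathcal Z}(P)]=p^{\dim_L{\mathcal D}(P)}<\infty$. The mechanism: since $\phi_P(L)$ is commutative and each $\phi_P(b)$ is left $L$-linear, the nilpotent operator $N(a)=\phi_P(a)-a\cdot{\rm Id}$ commutes with both the left and the right action of $L$, so it is an endomorphism of the $L$-bimodule $P$, and $C:=L\,\phi_P(L)\subseteq{\rm End}_{\rm L-bimod}(P)$ is a finite-dimensional commutative local $L$-algebra with residue field $L$ and nilpotent maximal ideal ${\mathfrak n}$, realized as a quotient of $L\otimes_{{\mathcal Z}(P)}L$. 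The assignment $a\mapsto N(a)\bmod{\mathfrak n}^2$ is a derivation of $L$ into the finite-dimensional $L$-vector space ${\mathfrak n}/{\mathfrak n}^2$; the plan is to show, going up the ${\mathfrak n}$-adic filtration of $C$ layer by layer, that all the derivations produced this way lie in ${\mathcal D}(P)$, and then to conclude by induction that for $a\in F$ one has $N(a)\in{\mathfrak n}^i$ for every $i$, hence $N(a)=0$. Here is where the weakly Galois hypothesis is indispensable: by its very construction (the proof of Proposition \ref{jac}(i)) ${\mathcal D}(P)$ only records the extension between the bottom two socle layers of $P$, whereas $N(a)$ a priori involves higher-order differential data coming from deeper in the socle filtration of $P$. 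To force all of that data back into ${\mathcal D}(P)$ one uses the identity $N_{P\otimes_L P}(a)=N(a)\otimes{\rm Id}+{\rm Id}\otimes N(a)$ together with $P\otimes_L P\subseteq P^N$ and the computations of Proposition \ref{jac}(ii),(iii), which realize commutators, $p$-th powers, and the other operations needed inside tensor powers of the bimodules $M(D)$, and hence inside multiples of $P$.

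For the containment $P\supseteq L\otimes_{{\mathcal Z}(P)}L$, set $F={\mathcal Z}(P)$, so that $L/F$ is finite purely inseparable of exponent one. Choosing a $p$-basis $\alpha_1,\dots,\alpha_n$ of $L$ over $F$ (so $n=\dim_L{\mathcal D}(P)$ and $[L:F]=p^n$), the ring $C=L\,\phi_P(L)$ is spanned over $L$ by the $p^n$ products $\phi_P(\alpha_1)^{b_1}\cdots\phi_P(\alpha_n)^{b_n}$ with $0\le b_i<p$, and is a quotient of $R:=L\otimes_F L$; thus the socle length of $P$ equals the nilpotency index of ${\mathfrak n}$ and is at most that of ${\mathfrak n}_R$. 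A socle-length argument — passing to $P^{\otimes m}$, which embeds into a multiple of $P$ and carries the same central subalgebra ${\mathcal Z}(P)$, and whose central elements act by $N_{P^{\otimes m}}(\alpha_i)=\sum_k{\rm Id}^{\otimes(k-1)}\otimes N(\alpha_i)\otimes{\rm Id}^{\otimes(m-k)}$ — forces $C\cong R$, i.e. $P$ to be a faithful $R$-module. Since $R$ is a local commutative Frobenius $L$-algebra with one-dimensional socle (its unique minimal ideal), a faithful $R$-module contains a copy of the regular module, so $P$ contains $L\otimes_F L=L\otimes_{{\mathcal Z}(P)}L$ as a subbimodule. (Alternatively one builds $L\otimes_{{\mathcal Z}(P)}L$ inside a tensor power of the bimodules $M(D_i)$, $D_i$ an $L$-basis of ${\mathcal D}(P)$, hence inside a multiple of $P$, and then invokes Proposition \ref{conta1}.) This completes the proof.

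The step I expect to be the main obstacle is the layer-by-layer propagation in the second paragraph: ${\mathcal D}(P)$ is manifestly sensitive only to the bottom of the socle filtration of $P$, while $F\subseteq{\mathcal Z}(P)$ is a statement about the right action of $L$ on all of $P$, so the condition $P\otimes_L P\subseteq P^N$ must be exploited to show that the higher-order part of the right action contributes no derivation outside the restricted Lie ring ${\mathcal D}(P)$. I would carry this out by induction on the socle length of $P$, using at each stage that passage to $P\otimes_L P\subseteq P^N$ exhibits the relevant derivation inside a length-two subbimodule of a multiple of $P$, and hence inside ${\mathcal D}(P)$.
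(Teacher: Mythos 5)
There is a genuine gap, and it sits exactly where you predicted: the reverse inclusion $F\subseteq {\mathcal Z}(P)$, where $F=L^{{\mathcal D}(P)}$, is not just unproven in your second paragraph --- it is false. Jacobson's correspondence only governs exponent-one subextensions ($L^p\subseteq F$ always), whereas ${\mathcal Z}(P)$ can sit strictly deeper. Concretely, take $L=k(u)$, ${\mathcal Z}=k(u^{p^2})$, and $P=L\otimes_{\mathcal Z}L\cong L[v]/(v^{p^2})$ with $v=x-u$; this is a purely inseparable (weakly) Galois $L$-bimodule with ${\mathcal Z}(P)={\mathcal Z}$, but the only derivations detected by length-two subbimodules are the $L$-multiples of $d/du$, so $F=L^{{\mathcal D}(P)}=k(u^p)\supsetneq {\mathcal Z}(P)$. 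The same example kills the mechanism you outline for the ``layer-by-layer'' propagation: for $a=u^p\in F$ one has $N(a)=\phi_P(a)-a\cdot{\rm Id}$ equal to multiplication by $v^p\neq 0$, so the intended conclusion $N(a)=0$ for all $a\in F$ cannot hold. Your third paragraph then inherits the damage, since it assumes $L/{\mathcal Z}(P)$ has exponent one with $[L:{\mathcal Z}(P)]=p^{\dim_L{\mathcal D}(P)}$ and chooses a $p$-basis accordingly; and the ``socle-length argument forces $C\cong L\otimes_{{\mathcal Z}(P)}L$'' step is precisely the substance of the theorem, not a reduction of it. (Your preliminary observations are fine: the coefficient extraction from $\mu_a(t)=(t-a)^{d_a}$ does show $L/{\mathcal Z}(P)$ is purely inseparable of bounded exponent, and ${\mathcal Z}(P)\subseteq F$ with $[L:F]<\infty$ is correct --- but bounded exponent does not give finiteness, which is the actual difficulty.)

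The paper's proof accepts that ${\mathcal D}(P)$ only sees the bottom of the socle filtration and turns this into an induction on the socle length $\ell(P)$ rather than trying to force $F={\mathcal Z}(P)$. Writing $[L:F]=p^m$ with $p$-basis $x_1,\dots,x_m$ and dual derivations $D_i\in{\mathcal D}(P)$, one builds inside $M(D_i)^{\otimes(p-1)}$ the cyclic subbimodule $Q(D_i)\cong L[u]/(u^p)$ with right action $v\circ a=v\exp(uD_i(a))$, and checks $Q(D_1)\otimes_L\cdots\otimes_L Q(D_m)\cong L\otimes_F L$; since each $M(D_i)$ lies in a multiple of $P$ and $P$ is weakly Galois, this puts $L\otimes_F P$ inside $P^N$, so $P|_F$ is again a purely inseparable weakly Galois $F$-bimodule with the \emph{same} center but with $\ell(P|_F)<\ell(P)$ (its second socle layer becomes semisimple over $F$). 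Induction then gives that $F$, hence $L$, is finite purely inseparable over ${\mathcal Z}(P)$, and the containment of $L\otimes_{{\mathcal Z}(P)}L$ follows by combining $F\otimes_{{\mathcal Z}(P)}F\subseteq P|_F$ with the embedding of $L\otimes_F P\otimes_F L$ into a multiple of $P$ and Proposition \ref{conta1}. If you want to salvage your write-up, replace the goal ``$F\subseteq{\mathcal Z}(P)$'' by this descent-to-$F$ induction; your construction of derivations and use of Proposition \ref{jac} are the right ingredients for that argument.
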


\begin{proof}
The theorem is trivial in characteristic zero, so we will assume
that ${\rm char}L=p>0$. 

We will prove the theorem by induction in the length $\ell(P)$ of the socle
filtration of $P$, starting from the trivial case $\ell=1$. 

By Jacobson's theorem \cite{J}, if ${\mathcal D}$ 
is a finite dimensional restricted 
$L$-Lie ring of derivations of $L$, then $[L:L^{\mathcal D}]
<\infty$. Therefore, Proposition \ref{jac}   
implies that $L$ is a finite extension 
of the field of invariants $F:=L^{{\mathcal D}(P)}$, of exponent $1$ (i.e., 
$L^p\subset F$). Let $[L:F]=p^m$ and $L=F(x_1,...,x_m)$, where
$x_i^p=\alpha_i\in F$.
Define derivations $D_1,...,D_m$ of $L$ over $F$ 
by $D_i(x_j)=\delta_{ij}$.  

Let $D$ be a derivation of $L$ such that $D^p=0$. 
Consider the $L$-bimodule $M(D)^{\otimes {p-1}}$, and consider the subbimodule $Q(D)$ in it 
generated by $1$. As explained in the proof of Proposition
\ref{jac}, 
this subbimodule is spanned over $L$ by $e_i$, $i=0,...,p-1$ (where $e_0=1$), 
and the right action is defined by 
$$
v\circ a=v\sum_{i=0}^{p-1}e_iD^i(a). 
$$
Note that $e_i=\frac{u^i}{i!}$, where $u=e_1$, so $Q(D)=L[u]/(u^p)$, and 
we can rewrite the last formula as 
$$
v\circ a=v\exp(uD(a)).
$$ 
Thus, $Q(D_1)\otimes_L...\otimes_L Q(D_m)=L[u_1,...,u_m]/(u_1^p=...=u_m^p=0)$, 
with the usual left action of $L$ and the right action given by 
$v\circ a=v\gamma(a)$, where $\gamma(x_i)=x_i+u_i$. 
Thus, $Q(D_1)\otimes_L...\otimes_L Q(D_m)\cong L\otimes_F L$ as 
$L$-bimodules, Now, $Q(D_i)$ is contained in a tensor power of
$P$. Hence,  $Q(D_1)\otimes_L...\otimes_L
Q(D_m)\otimes_L P$ is contained in a tensor power of $P$.  
Therefore, since $P$ is a weakly Galois bimodule, 
$Q(D_1)\otimes_L...\otimes_L Q(D_m)\otimes_L P$
is contained in a multiple of $P$, i.e. 
$L\otimes_F P$ is contained in $P^N$ for some $N$. 
Hence, $P\otimes_F P=P\otimes_L L\otimes_F P\subset P\otimes_L
P^N\subset P^{nN}$. This implies that $P|_F$ is a 
purely inseparable 
weakly Galois $F$-bimodule with the same center ${\mathcal
  Z}(P)$. 

Moreover, we have $\ell(P|_F)<\ell(P)$. Indeed, 
if ${\bold F}_\bullet$ is the socle filtration of $P$ then 
by construction ${\bold F}_2P|_F$ is semisimple (i.e., 
isomorphic to a multiple of $F$). 

Thus, by the induction assumption, 
$F$ is a finite purely inseparable extension 
of ${\mathcal Z}(P)$. 
It follows that $L$ is a finite purely inseparable
extension of ${\mathcal Z}(P)$. 

So to complete the induction and prove the theorem, it remains 
to show that $P$ contains $L\otimes_{{\mathcal Z}(P)}L$. 

We have seen that $L\otimes_F L$ embeds into a tensor power, 
hence a multiple of $P$. 
Since $P$ is weakly Galois, this means that 
$L\otimes_F P\otimes_F L=L\otimes_F L\otimes_L P\otimes_L
L\otimes_F L$ embeds into a multiple of $P$.
But by the induction assumption, $P$ contains 
$F\otimes_{{\mathcal Z}(P)}F$, hence  
$L\otimes_F P\otimes_F L$ contains $L\otimes_{{\mathcal Z}(P)}L$. 
Hence $L\otimes_{{\mathcal Z}(P)}L$ is contained in $P^N$ for some $N$. 
Thus, by Proposition \ref{conta1}, 
$L\otimes_{{\mathcal Z}(P)}L$ is actually a direct 
summand in $P$.
The theorem is proved. 
\end{proof} 

\subsection{The classification theorem for Galois bimodules over
 a field}

\begin{proposition}\label{conta3} 
Let $P$ be a weakly Galois $L$-bimodule.
Then the bimodule $L\otimes_{{\mathcal{Z}}(P)}L$ 
is contained in $P^N$ for some $N$. 
In particular, $[L:{\mathcal Z}(P)]<\infty$.  
\end{proposition}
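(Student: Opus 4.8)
The plan is to reduce to the split case over a field and then feed in the purely inseparable theory developed above.

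\textbf{Reduction to the split case.} First I would invoke Theorem \ref{spli2}: let $E$ be the splitting field produced there and $P_*:=E\otimes_L P\otimes_L E$ the associated split weakly Galois $E$-bimodule, so that $\mathrm{gr}(P_*)=P(\mathbf n,E,G)$. By \ref{spli2}(v), $\mathcal Z(P_*)=\mathcal Z(P)=:F$, and $\mathcal Z(\mathrm{gr}(P_*))=E^G$ by \ref{spli2}(vi); since $\mathcal Z(P_*)\subseteq\mathcal Z(\mathrm{gr}(P_*))$ we get a tower $F\subseteq E^G\subseteq L\subseteq E$. Because the left and right $L$-actions agree on $E$, $E\cong L^{[E:L]}$ as an $(L,L)$-bimodule (cf.\ Lemma \ref{ext}(i)), hence $P_*\cong P^{[E:L]^2}$ as $L$-bimodules. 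It therefore suffices to prove $[E:F]<\infty$ and to embed $E\otimes_F E$ (hence also $L\otimes_F L$) into a power of $P_*$.

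\textbf{Finiteness and the purely inseparable part.} By Lemma \ref{restri}(ii), $P_*|_{E^G}$ is a purely inseparable weakly Galois $E^G$-bimodule with center $E^G\cap\mathcal Z(P_*)=F$. Theorem \ref{insep} then gives that $E^G/F$ is a finite purely inseparable extension and that $E^G\otimes_F E^G$ sits inside $P_*$ (as an $E^G$-bimodule, indeed as a direct summand by Proposition \ref{conta1}). Since $E/E^G$ is Galois with group $G$, $[E:E^G]=|G|<\infty$; combined with $[E^G:F]<\infty$ and $L\subseteq E$ this proves $[L:\mathcal Z(P)]<\infty$. Moreover $E\otimes_F E=E\otimes_{E^G}(E^G\otimes_F E^G)\otimes_{E^G}E$ by associativity, so applying the exact functor $E\otimes_{E^G}(-)\otimes_{E^G}E$ to the inclusion $E^G\otimes_F E^G\hookrightarrow P_*|_{E^G}$ yields an embedding $E\otimes_F E\hookrightarrow E\otimes_{E^G}P_*\otimes_{E^G}E$, whose target is a weakly Galois $E$-bimodule by Lemma \ref{ext}(i).

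\textbf{The separable direction and the main obstacle.} It remains to embed $E\otimes_{E^G}P_*\otimes_{E^G}E$ (or just its subbimodule $E\otimes_F E$) into a power of $P_*$. As $E/E^G$ is separable, $P_*$ is a direct summand of $E\otimes_{E^G}P_*\otimes_{E^G}E$ (split by a separability idempotent for $E\otimes_{E^G}E$), and the remaining summands are built from the twisted simple bimodules $Eg$, $g\in G$, so the task reduces to showing that each such $Eg$ lies inside some power of $P_*$. Now $P_*$ and all its tensor powers are split, so their socles are split separable weakly Galois bimodules; and if $Eg$ appears in $\mathrm{soc}(P_*^{\otimes s})$ for some $s$ it then appears in $\mathrm{soc}(P_*^N)=\mathrm{soc}(P_*)^N$, hence as a summand of $\mathrm{soc}(P_*)$. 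Thus the whole proof comes down to showing that the finite subgroup of $\mathrm{Aut}(E)$ attached to $\mathrm{soc}(P_*)$ (or to the socle of a high tensor power) is all of $G$ --- equivalently, that a split weakly Galois bimodule already carries each of its composition factors in the socle of some power of itself. This, I expect, is the crux: it forces one to use the weakly Galois inclusion $P_*\otimes_E P_*\subseteq P_*^N$ beyond the trivial bound $\mathrm{soc}(P_*)\otimes_E\mathrm{soc}(P_*)\subseteq\mathrm{soc}(P_*^{\otimes 2})$ (the ``interference'' between the two tensor slots being essential --- a non-split self-extension of $E\sigma$ by $E$ with $\sigma^2=1$ fails to be weakly Galois exactly because $P_*^{\otimes2}$ would acquire $E\sigma$ in its socle while $P_*^2$ does not), together with the self-injectivity of the Frobenius algebra $E^G\otimes_F E^G$ already embedded in $P_*$, in the spirit of Propositions \ref{conta1}--\ref{conta2}. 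Once all the $Eg$, $g\in G$, are located inside a power of $P_*$, the embedding $E\otimes_F E\hookrightarrow P_*^N$ follows, and restricting to $L$ gives $L\otimes_F L\hookrightarrow P_*^N\cong P^{N[E:L]^2}$.
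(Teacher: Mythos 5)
Your overall route is essentially the paper's: pass to the splitting field $E$ of Theorem \ref{spli2}, restrict the split bimodule $P_*=E\otimes_L P\otimes_L E$ to $E^G$, apply Lemma \ref{restri}(ii) and Theorem \ref{insep} to get $[E^G:\mathcal Z(P)]<\infty$ together with an embedding $E^G\otimes_{\mathcal Z(P)}E^G\hookrightarrow P_*|_{E^G}$, and then induce back up; in particular your proof of the clause $[L:\mathcal Z(P)]<\infty$ is complete. But for the main assertion there is a genuine gap: you never establish that $E\otimes_{E^G}P_*\otimes_{E^G}E$ (equivalently, each simple $Eg$, $g\in G$) embeds into a multiple of $P_*$ — you explicitly flag this as ``the crux'' and only speculate about how it might be proved. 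The missing statement is exactly Lemma \ref{restri}(i) of the paper, applied over $E$ with $F=E^G$, which you were free to quote: its proof picks, for each $g\in G$, a common eigenvector $v_g\in P_*$ with $\phi(a)v_g=g(a)v_g$, so that $\oplus_{g\in G}Ev_g\cong E\otimes_{E^G}E\subset P_*$, and then $E\otimes_{E^G}P_*\otimes_{E^G}E\subset P_*^{\otimes 3}$ up to multiplicity, which lies in a multiple of $P_*$ by the weakly Galois hypothesis.

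Moreover, your diagnosis of why this step should be hard is mistaken. For $g\ne h$ in ${\rm Aut}(E)$ one has ${\rm Ext}^1_{E{\rm -bimod}}(Eg,Eh)=0$: writing an extension as $v\circ a=g(a)v+\delta(a)w$, $w\circ a=h(a)w$, commutativity of $E$ forces $\delta(a)\bigl(g(b)-h(b)\bigr)=\delta(b)\bigl(g(a)-h(a)\bigr)$, so choosing $a_0$ with $g(a_0)\ne h(a_0)$ and $c=\delta(a_0)/(g(a_0)-h(a_0))$ gives $\delta(a)=g(a)c-ch(a)$, i.e.\ the extension splits. In particular the ``non-split self-extension of $E\sigma$ by $E$'' that you invoke as evidence that the weakly Galois condition must enter does not exist. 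Consequently any split bimodule of finite length decomposes into blocks $\oplus_{g}P_g$ in which all composition factors of $P_g$ are isomorphic to $Eg$, so every $Eg$ occurring in ${\rm gr}(P_*)$ already occurs in the socle of $P_*$ itself; this is the (tacit) justification for the eigenvectors $v_g$ in the paper's proof of Lemma \ref{restri}(i), and it requires neither ``interference between the tensor slots'' nor the Frobenius property of $E^G\otimes_{\mathcal Z(P)}E^G$. Had you supplied this observation (or simply cited Lemma \ref{restri}(i)), your argument would close and would coincide with the paper's proof up to reorganization.
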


\begin{proof}
First assume that $P$ is split. 
Let $G$ be the set of $g\in {\rm Aut}(L)$ such that 
$Lg$ occurs in ${\rm gr}(P)$, 
and let $F=L^G$. Then by Lemma \ref{restri}(ii), 
$P|_L$ is a purely inseparable weakly Galois 
$F$-bimodule, with the same center as $P$. 
So by Theorem \ref{insep} it contains
$F\otimes_{{\mathcal Z}(P)}F$. 
This means that $L\otimes_F P\otimes_F L$ 
contains $L\otimes_{{\mathcal{Z}}(P)}L$. 
But we know from Lemma \ref{restri}(i)  
that $L\otimes_F P\otimes_F L$ embeds into a multiple of
$P$. Thus, $L\otimes_{{\mathcal{Z}}(P)}L$ embeds into a multiple of $P$.   

Now consider the general case. Let $E$ be the extension of 
Theorem \ref{spli2}, such that the $E$-bimodule $E\otimes_L P\otimes_L E$ is 
split. Then, as we have just shown, a multiple of 
$E\otimes_L P\otimes_L E$ contains 
$E\otimes_{{\mathcal Z}(P)}E$, which in turn contains 
$L\otimes_{{\mathcal Z}(P)}L$. 
But as an $L$-bimodule, $E\otimes_L P\otimes_L E$ is
a multiple of $P$. So we get that 
$L\otimes_{{\mathcal Z}(P)}L$ is contained in a multiple 
of $P$, as desired.  
\end{proof} 

Now we are ready to prove the main theorem about Galois
bimodules. 

\begin{theorem}\label{classif} 
Let $L$ be a field. Then any Galois $L$-bimodule $P$ of rank $d$ 
is a multiple of $L\otimes_{{\mathcal Z}(P)}L$. 
In particular, $[L:{\mathcal Z}(P)]$ is finite and divides $d$. 
\end{theorem}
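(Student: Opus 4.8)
The plan is to combine the containment result just proven (Proposition \ref{conta3}) with the classification of Galois bimodules that contain $L\otimes_{{\mathcal Z}(P)}L$ as a direct summand (Proposition \ref{conta2}). This is essentially a two-line assembly of machinery already in place, so the real content has been pushed into the preceding lemmas.

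First I would invoke Proposition \ref{conta3}: since $P$ is a weakly Galois (in particular Galois) $L$-bimodule, the bimodule $L\otimes_{{\mathcal Z}(P)}L$ is contained in $P^N$ for some $N$, and in particular $[L:{\mathcal Z}(P)]<\infty$. Next, since $P$ is finite dimensional as a left $L$-vector space and $P^N\supset L\otimes_{{\mathcal Z}(P)}L$, Proposition \ref{conta1} upgrades this containment: $L\otimes_{{\mathcal Z}(P)}L$ is actually a direct summand of $P$ (not merely of $P^N$). Now $P$ is a Galois $L$-bimodule containing $L\otimes_{{\mathcal Z}(P)}L$ as a direct summand, so Proposition \ref{conta2} applies verbatim and gives that $P$ is a multiple of $L\otimes_{{\mathcal Z}(P)}L$, say $P\cong (L\otimes_{{\mathcal Z}(P)}L)^r$.

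Finally I would extract the numerical statement. Writing $m=[L:{\mathcal Z}(P)]$, comparing left $L$-dimensions in $P\cong (L\otimes_{{\mathcal Z}(P)}L)^r$ gives $d=rm$, so $m\mid d$; this is already guaranteed to be finite by the first step. That completes the proof.

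\begin{proof}
By Proposition \ref{conta3}, since $P$ is in particular a weakly Galois $L$-bimodule, the bimodule $L\otimes_{{\mathcal Z}(P)}L$ is contained in $P^N$ for some $N$, and $[L:{\mathcal Z}(P)]<\infty$. As $P$ is finite dimensional as a left $L$-vector space, Proposition \ref{conta1} shows that $L\otimes_{{\mathcal Z}(P)}L$ is a direct summand in $P$. Then Proposition \ref{conta2} applies and yields that $P$ is a multiple of $L\otimes_{{\mathcal Z}(P)}L$, say $P\cong (L\otimes_{{\mathcal Z}(P)}L)^r$ with $r\ge 1$. Comparing left $L$-dimensions gives $d=r[L:{\mathcal Z}(P)]$, so $[L:{\mathcal Z}(P)]$ is finite and divides $d$.
\end{proof}

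The main obstacle, in effect, is not in this proof at all but in Proposition \ref{conta3}, whose proof reduces to the split case via the splitting field of Theorem \ref{spli2} and then to the purely inseparable case handled by Theorem \ref{insep} (using the restricted Lie algebra of derivations and Jacobson's theorem). Assuming those earlier results, the present theorem is a routine consequence.
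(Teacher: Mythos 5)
Your proof is correct and is essentially identical to the paper's own argument: both deduce from Proposition \ref{conta3} that $L\otimes_{{\mathcal Z}(P)}L$ lies in some $P^N$, upgrade this to a direct summand of $P$ via Proposition \ref{conta1}, and then conclude via Proposition \ref{conta2}, with the divisibility following from comparing left dimensions. No gaps; the heavy lifting is indeed in the earlier results, exactly as the paper structures it.
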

 
\begin{proof}
By Proposition \ref{conta3}, $L\otimes_{{\mathcal{Z}}(P)}L$ 
is contained in $P^N$ for some $N$.
By Proposition \ref{conta1}, this means that 
$L\otimes_{{\mathcal{Z}}(P)}L$ is a direct summand in $P$. 
By Proposition \ref{conta2}, this means that 
$P$ is a multiple of $L\otimes_{{\mathcal{Z}}(P)}L$, which
implies the theorem. 
\end{proof}

\begin{corollary}\label{chara1}
In Corollary \ref{chara}, one may take $N_a=1$. 
Thus, if $P$ is a Galois $L$-bimodule, then 
the characteristic polynomial of $\phi_P(a)$ 
is a power of its minimal polynomial, and 
the coefficients $C_i(a)$ of this characteristic polynomial are
in ${\mathcal Z}(P)$. 
\end{corollary}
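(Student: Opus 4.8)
The plan is to read this off directly from the classification theorem (Theorem \ref{classif}). Put $F:=\mathcal{Z}(P)$. By Theorem \ref{classif} we have $m:=[L:F]<\infty$ and an $L$-bimodule isomorphism $P\cong (L\otimes_F L)^r$, with $d=rm$. Since $\phi_{P}$ is injective (over a field it is automatic: $\phi_P(a)\phi_P(a^{-1})={\rm Id}$ for $a\ne 0$) and characteristic and minimal polynomials are invariant under conjugation over $L$, it suffices to analyze the single operator of right multiplication by $a$ on $L\otimes_F L$: under the above isomorphism $\phi_P(a)$ becomes the block-diagonal matrix with $r$ identical blocks equal to this operator.

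First I would choose an $F$-basis $y_1,\dots,y_m$ of $L$; then $\{1\otimes y_j\}$ is an $L$-basis of $L\otimes_F L$ for the left $L$-structure, and in this basis the operator $x\otimes y\mapsto x\otimes ya$ has matrix equal to the matrix $\Lambda_a$ of multiplication by $a$ on $L$ regarded as an $m$-dimensional $F$-vector space; in particular this matrix has entries in $F$. So $\phi_P(a)$ is conjugate over $L$ to the block matrix with $r$ diagonal blocks $\Lambda_a$. Now I would invoke two standard facts about $\Lambda_a$: (a) the minimal polynomial of a matrix is unchanged under extension of the base field, and the minimal polynomial of $\Lambda_a$ over $F$ is the minimal polynomial $\mu_{a/F}$ of $a$ over $F$ (the annihilator of the $F[t]$-module $L$, with $t$ acting as $\Lambda_a$, is exactly $(\mu_{a/F})$); hence $\mu_a=\mu_{a/F}$. (b) The characteristic polynomial of $\Lambda_a$ over $F$ is $\mu_{a/F}(t)^{[L:F(a)]}$, since $L$ is free of rank $[L:F(a)]$ over $F(a)$ and $\Lambda_a$ acts $F(a)$-linearly as the scalar $a$. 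Combining, the characteristic polynomial of $\phi_P(a)$ over $L$ is $\chi_a(t)=\mu_{a/F}(t)^{\,r[L:F(a)]}=\mu_a(t)^{\,r[L:F(a)]}$, an honest power of the minimal polynomial; note $r[L:F(a)]\cdot[F(a):F]=rm=d$, so the exponent is consistent with degrees. This shows one may take $N_a=1$ in Corollary \ref{chara}(i), and Corollary \ref{chara}(ii) with $N_a=1$ — i.e.\ that the coefficients $C_i(a)$ of $\chi_a$ itself lie in $\mathcal{Z}(P)$ — then follows immediately, since $\chi_a$ is a power of $\mu_a$ and the coefficients of $\mu_a$ lie in $\mathcal{Z}(P)$ by Proposition \ref{minpol}.

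There is essentially no serious obstacle: all the real work was already done in Theorem \ref{classif}, and what remains is the elementary linear algebra of multiplication operators in a finite field extension. The only points needing a little care are the identification $\mu_a=\mu_{a/F}$ (using injectivity of $\phi_P$ and stability of minimal polynomials under base change) and bookkeeping of multiplicities so that the exponent $r[L:F(a)]$ emerges as a genuine nonnegative integer; both are routine.
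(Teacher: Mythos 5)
Your proof is correct and follows essentially the same route as the paper: reduce via Theorem \ref{classif} to (a multiple of) $L\otimes_{\mathcal Z(P)}L$, identify $\phi_P(a)$ with (copies of) the multiplication operator by $a$ on $L$ as a $\mathcal Z(P)$-vector space, and conclude $\chi_a=\mu_a^{\,r[L:\mathcal Z(P)(a)]}$ with coefficients in $\mathcal Z(P)$ by Proposition \ref{minpol}. The extra details you supply (explicit basis, base-change invariance of the minimal polynomial) are just an expanded version of the paper's one-line computation; the remark about injectivity of $\phi_P$ is harmless but not needed.
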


\begin{proof}
By Theorem \ref{classif}, we only need to show 
that the statement holds for $P=L\otimes_{{\mathcal Z}(P)}L$. 
But in this case the minimal polynomial $\mu_a$ 
and the characteristic polynomial $\chi_a$  of
$\phi(a)$ are just the minimal and characteristic polynomials 
of the operator $M_a$ of multiplication by $a$ 
in $L$, regarded as a ${\mathcal Z}(P)$-vector space. 
So, it is clear that $\chi_a=\mu_a^r$, where  
$r=[L:{\mathcal Z}(P)(a)]$. 
\end{proof}

\subsection{Galois bimodules over matrix algebras and central
  simple algebras}

Let $L$ be a field. 

\begin{proposition}\label{matr} There is a natural bijection
between isomorphism classes of 
Galois bimodules over $L$ of rank $d$ and 
isomorphism classes of Galois bimodules over ${\rm Mat}_m(L)$ 
of rank $d$, defined by $P\mapsto {\rm Mat}_m(P)$
as in Lemma \ref{ext}(ii).
The center is preserved under this bijection.   
\end{proposition}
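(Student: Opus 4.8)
The plan is to realize $P\mapsto{\rm Mat}_m(P)$ as half of a Morita equivalence and then to check that the two ingredients in the definition of a Galois bimodule of rank $d$ --- freeness of rank $d$ on each side, and the isomorphism $P\otimes P\cong P^d$ --- are transported faithfully in both directions.

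Write $A:={\rm Mat}_m(L)$. An $A$-bimodule is a left module over $A\otimes_k A^{\rm op}\cong{\rm Mat}_{m^2}(L\otimes_k L^{\rm op})$, and the matrix unit $e:=E_{11}\in A$, acting simultaneously on the left and on the right, corresponds to an idempotent $e\otimes e\in A\otimes_k A^{\rm op}$ which is full (since $AE_{11}A=A$, because $\sum_i E_{i1}E_{1i}={\rm Id}$) with corner ring $(e\otimes e)(A\otimes_k A^{\rm op})(e\otimes e)=eAe\otimes_k(eAe)^{\rm op}=L\otimes_k L^{\rm op}$. By standard Morita theory for a full idempotent, $G\colon Q\mapsto eQe$ is therefore an equivalence from $A$-bimodules to $L$-bimodules. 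A direct computation gives $e\,{\rm Mat}_m(P)\,e=e\,(P\otimes_k{\rm Mat}_m(k))\,e\cong P$, naturally in $P$; since the quasi-inverse of an equivalence is unique up to isomorphism, this identifies $F:={\rm Mat}_m(-)$ with a quasi-inverse of $G$, so $F$ and $G$ induce mutually inverse bijections on isomorphism classes of bimodules.

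It remains to see that $F$ carries Galois $L$-bimodules of rank $d$ onto Galois $A$-bimodules of rank $d$. One direction is Lemma \ref{ext}(ii): from $L^d\otimes_k{\rm Mat}_m(k)=(L\otimes_k{\rm Mat}_m(k))^d$ we get that ${\rm Mat}_m(P)$ is free of rank $d$ on each side when $P$ is, and from the identity $(P\otimes_k{\rm Mat}_m(k))\otimes_A(P'\otimes_k{\rm Mat}_m(k))=(P\otimes_L P')\otimes_k{\rm Mat}_m(k)$, together with the compatibility of ${\rm Mat}_m(-)$ with finite direct sums, $P\otimes_L P\cong P^d$ yields ${\rm Mat}_m(P)\otimes_A{\rm Mat}_m(P)\cong{\rm Mat}_m(P)^d$. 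Conversely, if $Q$ is a Galois $A$-bimodule of rank $d$, put $P:=G(Q)=eQe$; since $L$ is a field, $P$ is free on each side, and comparing dimensions over the center $L={\mathcal Z}(A)$ in the isomorphism ${\rm Mat}_m(P)\cong Q$ shows that $P$ has rank $d$ on each side. Applying the same tensor identity in reverse, ${\rm Mat}_m(P\otimes_L P)={\rm Mat}_m(P)\otimes_A{\rm Mat}_m(P)\cong Q\otimes_A Q\cong Q^d\cong{\rm Mat}_m(P^d)$, and since $G$ is an equivalence, hence reflects isomorphisms, we conclude $P\otimes_L P\cong P^d$. Thus $P$ is Galois of rank $d$ and $F(P)\cong Q$, so $F$ restricts to the asserted bijection.

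Finally, for the statement about centers: by the Remark following the definition of the center of a bimodule, ${\mathcal Z}({\rm Mat}_m(P))\subseteq{\mathcal Z}(A)=L\cdot{\rm Id}$, and a scalar matrix $z\cdot{\rm Id}$ acts by the same operator on the left and on the right of ${\rm Mat}_m(P)=P\otimes_k{\rm Mat}_m(k)$ precisely when $z$ does so on $P$; hence ${\mathcal Z}({\rm Mat}_m(P))={\mathcal Z}(P)$ under the identification ${\mathcal Z}(A)=L$. The only step here that is not purely formal is the Morita identification of ${\rm Mat}_m(-)$ with the inverse of $Q\mapsto eQe$ as functors on bimodules (rather than on one-sided modules), together with the attendant bookkeeping with the idempotent $e$; I expect this to be the main point to get right, everything else reducing to Lemma \ref{ext}(ii) and dimension counts over the field $L$.
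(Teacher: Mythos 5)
Your proposal is correct and is essentially the paper's own argument: the paper likewise inverts $P\mapsto{\rm Mat}_m(P)$ by cutting with matrix-unit idempotents, setting $\bar Q=E_{ii}QE_{jj}$ (your $eQe$) and checking the rank and the relation $\bar Q\otimes_L\bar Q\cong\bar Q^{\,d}$ by the same dimension counts. Your Morita-theoretic packaging via the full idempotent $e\otimes e$ in $A\otimes_k A^{\rm op}$ merely systematizes the steps the paper declares ``clear,'' so no further comparison is needed.
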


\begin{proof}
We only need to construct the inverse. Suppose that $Q$ is a
Galois bimodule for ${\rm Mat}_m(L)$ of rank $d$.
Define the $L$-bimodules $\bar Q_{ij}=E_{ii}QE_{jj}$, where 
$E_{pq}$ are the elementary
matrices. Clearly, these bimodules are all isomorphic, so we'll
call them $\bar Q$. We clearly have 
$\bar Q\otimes_L\bar Q=\bar Q^d$, 
and also $m\bar Q=\oplus_i \bar Q_{ij}=QE_{jj}$, which is 
of dimension $dm$ over $L$ on the right.
Hence $\bar Q$ is of dimension $d$
over $L$ on the right, and similarly on the left. 
The assignment $Q\mapsto \bar Q$ is inverse to $P\mapsto {\rm
  Mat}_m(P)$, as desired.   

The fact that the center is preserved is clear. 
\end{proof}

\begin{corollary}\label{classmatr}
Any Galois bimodule over ${\rm Mat}_m(L)$
is a multiple of ${\rm Mat}_m(L\otimes_{{\mathcal{Z}}(P)}L)$.
\end{corollary}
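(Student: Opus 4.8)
The plan is to reduce the statement to the classification of Galois bimodules over a field (Theorem \ref{classif}) by means of the Morita-type bijection of Proposition \ref{matr}; in particular nothing new has to be proved in positive characteristic. Let $P$ be a Galois bimodule over ${\rm Mat}_m(L)$ of rank $d$. First I would invoke Proposition \ref{matr}: the assignment $Q\mapsto {\rm Mat}_m(Q)$ is a bijection between isomorphism classes of rank-$d$ Galois bimodules over $L$ and rank-$d$ Galois bimodules over ${\rm Mat}_m(L)$, so there is a Galois $L$-bimodule $\bar P$ of rank $d$ with $P\cong {\rm Mat}_m(\bar P)$; concretely $\bar P = E_{11}PE_{11}$, as in the proof of that proposition. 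Since the center of ${\rm Mat}_m(L)$ is $L\cdot I_m$, the center ${\mathcal Z}(P)$ is contained in $L\cdot I_m$, and Proposition \ref{matr} tells us that the center is preserved by the bijection, so ${\mathcal Z}(P) = F\cdot I_m$, where $F := {\mathcal Z}(\bar P)\subseteq L$; this is the identification of ${\mathcal Z}(P)$ with a subfield of $L$ that is implicit in the statement of the corollary.

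Next I would apply Theorem \ref{classif} to $\bar P$: it gives that $\bar P$ is a multiple of $L\otimes_F L$, say $\bar P\cong (L\otimes_F L)^r$ for some positive integer $r$ with $r\,[L:F] = d$. Applying the functor ${\rm Mat}_m(-)$ (matrices with entries in the given $L$-bimodule, with ${\rm Mat}_m(L)$ acting by matrix multiplication on both sides) to this isomorphism of $L$-bimodules, and using the evident additivity identification ${\rm Mat}_m\bigl((L\otimes_F L)^r\bigr)\cong {\rm Mat}_m(L\otimes_F L)^r$ of ${\rm Mat}_m(L)$-bimodules, one obtains $P\cong {\rm Mat}_m(L\otimes_{{\mathcal Z}(P)}L)^r$, which is exactly the claim.

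I do not expect any real obstacle here beyond the two cited inputs: the entire substance lies in Theorem \ref{classif} (hence in the containment results, Propositions \ref{conta1}--\ref{conta3}) and in the bijection of Proposition \ref{matr}. The one point worth an explicit line is the equality ${\mathcal Z}(P) = F\cdot I_m$: a scalar matrix $aI_m$ acts on ${\rm Mat}_m(\bar P)$ by multiplying every entry by $a$, so $P = {\rm Mat}_m(\bar P)$ is linear over $aI_m$ if and only if $\bar P$ is linear over $a$; taking the maximal such scalar (using the characterization of the center as the largest central subring over which the bimodule is linear) identifies the two centers. This is routine and is in any case already contained in the final sentence of the proof of Proposition \ref{matr}.
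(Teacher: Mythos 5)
Your proof is correct and follows exactly the paper's route: Corollary \ref{classmatr} is deduced by combining the bijection of Proposition \ref{matr} with Theorem \ref{classif}, which is precisely what you do (the paper merely leaves the transfer of the center and the additivity of ${\rm Mat}_m(-)$ implicit). Nothing further is needed.
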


\begin{proof}
This follows from Proposition \ref{matr} and Theorem
\ref{classif}. 
\end{proof} 

\begin{definition} 
We will say that a Galois bimodule over ${\rm Mat}_m(L)$ 
is {\it split} if so is the corresponding $L$-bimodule under the
bijection of Proposition \ref{matr}.
\end{definition} 

Now assume that $B$ is a central simple algebra over $L$
of dimension $m^2$, and let
$P$ be a Galois bimodule over $B$ of rank $d$. 
Recall that by restricting from $B$ to $L$,
by Lemma \ref{ext}(iii) $P$ is automatically a Galois $L$-bimodule
of rank $dm^2$, with the same center. 

\begin{proposition}
\label{censim}
There exists a normal extension  $K$ of 
${\mathcal Z}(P)$ finite over $L$ which is a splitting field for 
the algebra $B$ and contains the eigenvalues of $\phi_P(a)$ for all $a\in L$. 
Moreover, $K\otimes_L P\otimes_L K$ 
is a split Galois bimodule 
over ${\rm Mat}_m(K)$ of rank $d[K:L]$. 
\end{proposition}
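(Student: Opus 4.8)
The plan is to construct $K$ in two stages, first achieving a splitting field for $B$ and then enlarging to capture the eigenvalues, and finally to verify the resulting bimodule is split over the matrix algebra. First I would let $E$ be the finite normal extension of $\mathcal{Z}(P)$ provided by Theorem \ref{spli2}(i) applied to the $L$-bimodule underlying $P$ (which is Galois of rank $dm^2$ by Lemma \ref{ext}(iii)); this $E$ is finite over $L$, normal over $\mathcal{Z}(P)$, and contains all eigenvalues of $\phi_P(a)$ for $a\in L$. Then I would take a finite splitting field $E'$ of the central simple algebra $B$; since $B$ is central simple over $L$ and $\mathcal{Z}(P)\subset L$ is a finite extension (Theorem \ref{classif}), we may choose $E'$ to be finite over $L$, and by enlarging we may assume $E'$ is separable over $L$, hence take $E'\supseteq L$ inside some fixed algebraic closure. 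Finally, let $K$ be the normal closure over $\mathcal{Z}(P)$ of the compositum $E\cdot E'$; this is still finite over $L$, is a normal extension of $\mathcal{Z}(P)$, is a splitting field for $B$ (since it contains $E'$, and any extension of a splitting field is a splitting field), and contains all eigenvalues of $\phi_P(a)$ (since it contains $E$).

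Next I would address the structure of $K\otimes_L P\otimes_L K$. Since $K$ is a splitting field for $B$, we have $K\otimes_L B\cong \mathrm{Mat}_m(K)$ as $K$-algebras. Therefore $K\otimes_L P\otimes_L K$, viewed via this isomorphism, becomes a bimodule over $\mathrm{Mat}_m(K)$. By Lemma \ref{ext}(i), $K\otimes_L P\otimes_L K$ is a Galois $K$-bimodule of rank $d[K:L]$ — wait, more carefully: Lemma \ref{ext}(i) is stated for the extension of scalars with $B'$ free as a bimodule over the smaller ring, so here I apply it with the base ring $B$ and its extension $K\otimes_L B$, noting $K\otimes_L B\cong B^{[K:L]}$ as a $B$-bimodule since $K\cong L^{[K:L]}$ as an $L$-bimodule and $B$ is free over $L$. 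This gives that $K\otimes_L P\otimes_L K$ is a Galois bimodule over $\mathrm{Mat}_m(K)$ of rank $d[K:L]$. By the definition preceding the proposition, it is \emph{split} precisely when the associated $K$-bimodule under the bijection of Proposition \ref{matr} is split.

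To see that this associated $K$-bimodule is split, I would unwind the bijection of Proposition \ref{matr}: cutting $K\otimes_L P\otimes_L K$ by the idempotents $E_{ii}$ of $\mathrm{Mat}_m(K)$ recovers (up to isomorphism) a $K$-bimodule whose composite over all diagonal blocks is related to the original; more directly, the restriction of the $\mathrm{Mat}_m(K)$-bimodule to $K\subset \mathrm{Mat}_m(K)$ is a multiple of the associated $K$-bimodule by $m^2$ together with the matrix structure. Splitness is a property of eigenvalues of $\phi(a)$ for $a$ in the base field $K$, and these eigenvalues are determined by those of $\phi_P(a)$ for $a\in L$ together with the (rational, hence $K$-valued) eigenvalues coming from the $K/L$ part of the extension — all of which lie in $K$ by construction. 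So the eigenvalues of the cut-down bimodule all lie in $K$, i.e. it is split, hence $K\otimes_L P\otimes_L K$ is a split Galois bimodule over $\mathrm{Mat}_m(K)$.

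The main obstacle I expect is the bookkeeping in the last paragraph: carefully matching the eigenvalues of $\phi_{K\otimes_L P\otimes_L K}$ restricted to $K$ with the eigenvalues of $\phi_P$ on $L$ through both the central-simple splitting isomorphism $K\otimes_L B\cong\mathrm{Mat}_m(K)$ and the bijection of Proposition \ref{matr}. The cleanest route is probably to first establish splitness for the $L$-bimodule $K\otimes_L P_L\otimes_L K$ (where $P_L$ is $P$ restricted to $L$) directly from Theorem \ref{spli2}(ii) applied over the field $K$, observing that $K$ contains all needed eigenvalues and is normal over $\mathcal{Z}(P)$, and then transport this through Proposition \ref{matr} and the identification of the central simple algebra; the compatibility of restriction-to-$L$ with the Proposition \ref{matr} bijection is exactly the content that makes "split" well-defined, so no genuinely new argument is needed beyond careful diagram-chasing.
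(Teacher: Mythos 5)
Your construction of $K$ and your ``cleanest route'' at the end coincide with the paper's own proof, which takes the normal closure of the compositum of a splitting field of $B$ with the eigenvalue field $E$ of Theorem \ref{spli2}, and then cites Theorem \ref{spli2}, Proposition \ref{p1a} and Proposition \ref{matr}; the reduction of splitness over ${\rm Mat}_m(K)$ to splitness of the $K$-bimodule $K\otimes_L P|_L\otimes_L K$ via the Morita-type bijection is also exactly as intended.

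One warning, though: the justification in your middle paragraph --- that the eigenvalues of $\phi(a)$ for $a\in K$ ``are determined by those of $\phi_P(a)$ for $a\in L$ together with rational data, all of which lie in $K$ by construction'' --- is not a valid argument, and the paper's own example (with $L=\Bbb C(t)$, $P=P(L,\Bbb Z/2)$, $E=L[u]/(u^2-1-t)$) shows precisely why: $E$ contains every eigenvalue of $\phi_P(a)$ for $a\in L$, yet $E\otimes_L P\otimes_L E$ is not split, because $\phi$ applied to the \emph{new} elements of the larger field produces eigenvalues outside it. What rescues the statement is normality of $K$ over the center ${\mathcal Z}(P)$, which forces the eigenvalue homomorphisms $L\to K$ to extend to automorphisms of $K$; the precise bridge from $E$ to the larger field $K$ is Proposition \ref{p1a}(i) (or, equivalently, rerunning the argument of Theorem \ref{spli2}(ii) with $K$ in place of the minimal field $E$, which only uses that $K$ contains the eigenvalues and is normal over ${\mathcal Z}(P)$). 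So you should delete the ``eigenvalues are already there'' heuristic and promote your final paragraph, with an explicit appeal to Proposition \ref{p1a}, to be the actual proof of splitness; with that substitution your argument is the paper's.
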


\begin{proof} 
Let $S$ be a finite extension of $L$ which splits $B$,
and let $E$ be the extension of Theorem \ref{spli2}. 
Let $K$ be the normal closure over 
$S\cdot E$ over $E$.  
Then by Theorem \ref{spli2}, Proposition \ref{p1a} 
and Proposition \ref{matr}, the $K$-bimodule 
$K\otimes_L P\otimes_L K$ is a split Galois bimodule 
over ${\rm Mat}_m(K)$ of rank $d[K:L]$. 
 \end{proof}
  
\begin{corollary}\label{censim1}
If $P$ is a Galois bimodule over $B$, then 
$P^{m^2}$ is a multiple of $B\otimes_{{\mathcal Z}(P)}B$.  
\end{corollary}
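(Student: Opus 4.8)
The plan is to recast the statement as a freeness assertion over the enveloping algebra and then exploit that this algebra is Azumaya over an Artinian base. Put $Z={\mathcal Z}(P)$; since ${\mathcal Z}(P)$ is the maximal central subring of $B$ over which $P$ is linear, $Z\subseteq Z(B)=L$, and $P$ is a left module over $R:=B\otimes_Z B^{\rm op}$. The first point is the observation that $x\otimes y\mapsto x\otimes y^{\rm op}$ is an isomorphism $B\otimes_Z B\xrightarrow{\ \sim\ }R$ of left $R$-modules (well defined over $\otimes_Z$ because $Z$ is central in $B$, and intertwining the bimodule action on $B\otimes_Z B$ with the left regular action on $R$). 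Thus ``$P^{m^2}$ is a multiple of $B\otimes_Z B$'' is equivalent to ``$P^{m^2}$ is a free $R$-module,'' and that is what I would prove.

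Next I would record the structural inputs. By Lemma~\ref{ext}(iii) the restriction $P|_L$ is a Galois $L$-bimodule of rank $dm^2$ with center $Z$, so Theorem~\ref{classif} gives $P|_L\cong(L\otimes_Z L)^c$ with $c\cdot[L:Z]=dm^2$; in particular $[L:Z]<\infty$ and, setting $\Lambda:=L\otimes_Z L$, the module $P$ is free of rank $c$ over $\Lambda$. On the other side, $\Lambda$ is a finite-dimensional commutative $Z$-algebra, hence Artinian, and $R\cong(B\otimes_L\Lambda)\otimes_\Lambda(\Lambda\otimes_L B^{\rm op})$ is an Azumaya $\Lambda$-algebra of degree $m^2$ (rank $m^4$), being the tensor product over $\Lambda$ of the two base changes of $B$ and $B^{\rm op}$ along the embeddings $L\hookrightarrow\Lambda$. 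Decompose $\Lambda=\prod_i\Lambda_i$ into local Artinian rings, and correspondingly $R=\prod_i R_i$, $P=\bigoplus_i P_i$.

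The heart of the argument is a local computation over each $\Lambda_i$. Write $R_i\cong{\rm Mat}_{n_i}(D_i)$ with $D_i$ Azumaya over $\Lambda_i$ and without nontrivial idempotents; then $\deg D_i=m^2/n_i$, so $n_i\mid m^2$, one has $R_i\cong V_i^{\,n_i}$ as a left $R_i$-module with $V_i:=D_i^{\,n_i}$, and ${\rm rk}_{\Lambda_i}V_i=m^4/n_i$. A $\Lambda_i$-free $D_i$-module is automatically $D_i$-free (lift a basis of its reduction modulo the maximal ideal and apply Nakayama over the local ring $\Lambda_i$, noting that the reduction of $D_i$ is a division algebra); transporting this through the Morita equivalence $D_i\text{-mod}\simeq R_i\text{-mod}$, every $\Lambda_i$-free $R_i$-module is a multiple of $V_i$. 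Hence $P_i\cong V_i^{\,e_i}$ with $e_i\cdot m^4/n_i=c$, and since $n_i\mid m^2$ the module $P_i^{\,m^2}\cong V_i^{\,e_i m^2}\cong R_i^{\,e_i m^2/n_i}$ is $R_i$-free of rank $e_i m^2/n_i=c/m^2$ — the same value for every $i$. Therefore $P^{\,m^2}\cong\bigoplus_i R_i^{\,c/m^2}=R^{\,c/m^2}$ is $R$-free, so $P^{\,m^2}\cong(B\otimes_Z B)^{c/m^2}=(B\otimes_Z B)^{d/[L:Z]}$; in particular $[L:Z]\mid d$. (Alternatively one may base change to the splitting field $K$ of Proposition~\ref{censim}, where $K\otimes_L P\otimes_L K$ becomes a multiple of ${\rm Mat}_m(K\otimes_Z K)$ by Corollary~\ref{classmatr}; but descending that isomorphism back to $B$ reproduces essentially the same local analysis.)

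The step I expect to be the main obstacle is precisely this local one in the inseparable case: when $L/Z$ is not separable, $\Lambda$ acquires nilpotents and the $R_i$ can be nonsplit Azumaya algebras with a genuine division part $D_i$, so one must verify that the $m^2$-th power of a $\Lambda_i$-free $R_i$-module is already $R_i$-free — which is exactly what the divisibility $n_i\mid m^2$, forced by $R$ being Azumaya of rank $m^4$, delivers. In characteristic $0$ or $p>d$ there is no difficulty: $L/Z$ is separable, $\Lambda$ is a product of fields, each $R_i$ is a matrix algebra over a field, and the computation is ordinary Morita theory.
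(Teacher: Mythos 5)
Your argument is correct, but it takes a genuinely different route from the paper's. The paper's proof base-changes to the splitting field $K$ of Proposition \ref{censim}, identifies $K\otimes_L P\otimes_L K$ with ${\rm Mat}_m(K\otimes_{{\mathcal Z}(P)}K)^r$ via Corollary \ref{classmatr}, and then descends by restricting to $B$-bimodules and cancelling the multiplicity $n^2=[K:L]^2$ (a Krull--Schmidt cancellation for finite-length bimodules). You never split $B$: you use Theorem \ref{classif} only through the restriction $P|_L$, which exhibits $P$ as a free module over the Artinian ring $\Lambda=L\otimes_{{\mathcal Z}(P)}L$, and then you analyse the enveloping algebra $R=B\otimes_{{\mathcal Z}(P)}B^{\rm op}$ as an Azumaya $\Lambda$-algebra of rank $m^4$, locally on the factors of $\Lambda$: lifting matrix units modulo the nilpotent radical to write $R_i\cong {\rm Mat}_{n_i}(D_i)$, Morita equivalence with $D_i$, and the Nakayama-plus-length count showing that a $D_i$-module which is $\Lambda_i$-free is $D_i$-free (this is the step that genuinely uses the $\Lambda$-freeness of $P$, which you have; the facts you quote without proof -- lifting of matrix units over an Artinian local base, stability of Azumaya algebras under base change and tensor product -- are standard). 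Your bookkeeping $e_i m^4/n_i=c$, $n_i\mid m^2$, and $c/m^2=d/[L:{\mathcal Z}(P)]$ is correct, so the conclusion and even the exact multiplicity come out right. What your route buys: it bypasses Proposition \ref{censim}, Corollary \ref{classmatr} and the final cancellation, it treats the inseparable case transparently (inseparability only means $\Lambda$ has nilpotents, which the local analysis is designed for), and it yields the divisibility $[L:{\mathcal Z}(P)]\mid d$ as a byproduct (compare Proposition \ref{divisi}). What the paper's route buys is brevity, since Propositions \ref{matr} and \ref{censim} are already available. One small correction: your closing parenthesis asserts that the splitting-field alternative ``reproduces essentially the same local analysis''; in fact the paper's descent is by multiplicity cancellation rather than by any local Azumaya argument, so the two proofs really are different.
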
 

\begin{proof}
By Proposition \ref{censim} and Proposition \ref{classmatr}, 
the Galois ${\rm Mat}_m(K)$-bimodule $K\otimes_L P\otimes_L K$
has the form ${\rm Mat}_m(K\otimes_{{\mathcal Z}(P)}K)^r$.
So $K\otimes_L P^{m^2}\otimes_L K$ 
has the form $({\rm Mat}_m(K)\otimes_{{\mathcal
    Z}(P)}{\rm Mat}_m(K))^r$, which as a $B$-bimodule 
is $(B\otimes_{{\mathcal Z}(P)}B)^{rn^2}$, where 
$n=[K:L]$. On the other hand, 
$$K\otimes_L P^{m^2}\otimes_L K=P^{m^2n^2}$$ as a $B$-bimodule. 
So we get that $P^{m^2}=(B\otimes_{{\mathcal Z}(P)}B)^r$, as
desired. 
\end{proof}
\section{Invariants of 
$K$-comodule algebras finite over center}

\subsection{The main theorem}

Let $K$ be a finite dimensional Hopf algebra over an algebraically closed field $k$. 
Let $A$ be a (right) $K$-comodule algebra. 
We are interested in the $K$-invariants $A^K$ of $A$, i.e., 
the space of elements $a\in A$ such that $\rho(a)=a\otimes 1$. 
\footnote{We note that  many authors call these elements coinvariants and denote the space of such elements by $A^{{\rm co}K}$.}

 Suppose that $Z$ is a central $k$-subalgebra 
of $A$ (not necessarily $K$-costable\footnote{A $K$-costable subspace of a $K$-comodule is the same thing as a subcomodule.}) which is an integral domain, 
and let $Q$ be the quotient field of $Z$. 

Consider the following (redundant) list of assumptions
on $A$ and $Z$: 

(1) $A$ is finitely generated as a $Z$-module.

(2) $Q\otimes_Z A$ is a central simple algebra with center $Q$. 

(3) $A$ is a torsion-free $Z$-module. 

(4) $Z$ is integrally closed (in $Q$). 

(5) $A$ is a projective $Z$-module. 

Our main result is the following theorem. 

\begin{theorem}\label{maint}
(i) Under assumptions (1)-(4), $Z$ and hence $A$ are integral 
over $Z\cap A^K$. 

(ii) The conclusion of (i) also holds if conditions (3), (4) are
replaced with condition (5). In particular, 
it holds if $A$ is an Azumaya algebra over $Z$. 

(iii) If in addition $Z$ is a finitely generated 
algebra over $k$, 
then in the situation of (i) and (ii), 
$A$ and $Z$ are finitely generated modules over 
$Z\cap A^K$, and thus $A$ is a finitely generated module
over $A^K$ (on both sides).   

(iv) If $Z$ is a finitely generated algebra over $k$ then 
in the situation of (i) or (ii), so is $Z\cap A^K$. 
\end{theorem}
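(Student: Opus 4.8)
\textbf{Proof plan for Theorem \ref{maint}.}

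The plan is to reduce everything to the Galois bimodule machinery developed in Section 2, applied to the bimodule $P = Q\otimes_Z A \otimes K$ over $B := Q\otimes_Z A$. First I would observe that by Proposition \ref{coa}, the coaction makes $A\otimes K$ a Galois $A$-bimodule of rank $d=\dim_k K$, and after base change to $Q$ (which is a localization, hence flat), $P = Q\otimes_Z(A\otimes K) = B\otimes_Q(Q\otimes K)$ becomes a Galois bimodule over $B$ of rank $d$, with center ${\mathcal Z}(P) = {\mathcal Z}(B)^K$ — and under assumption (2), ${\mathcal Z}(B) = Q$, so ${\mathcal Z}(P) = Q^K$, the subfield of $Q$ of elements fixed by the coaction. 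Then I would appeal to Corollary \ref{censim1}: since $B$ is a central simple $Q$-algebra of some dimension $m^2$, $P^{m^2}$ is a multiple of $B\otimes_{Q^K} B$. The key consequence is that the right-multiplication operator $\phi_P(z)$ for $z\in Z$ has characteristic polynomial (and minimal polynomial) with coefficients in ${\mathcal Z}(P) = Q^K$ — this is exactly Corollary \ref{chara1} combined with the fact that on $B\otimes_{Q^K}B$ the operator $\phi(z)$ is multiplication by $z$ on $B$ viewed as a $Q^K$-vector space, whose characteristic polynomial lives in $Q^K$.

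Next I would pin down the characteristic polynomial more carefully. The element $z\in Z\subset A\subset P$ acts on the right on $P = A\otimes K$, and since $A$ is a finitely generated (projective or torsion-free) $Z$-module, I can compute the characteristic polynomial $\chi_z(t)$ of right multiplication by $z$ on the free $Z$-module $A\otimes K \cong Z$-module of the appropriate rank; its coefficients lie in $Z$. (In the projective case, $A$ need not be free over $Z$, but one can still define the characteristic polynomial of an endomorphism of a finite projective module, or pass to a localization where $A$ becomes free; the coefficients are well-defined elements of $Z$.) On the other hand, after tensoring with $Q$, this same polynomial is the characteristic polynomial of $\phi_P(z)$ on $Q\otimes_Z(A\otimes K)$, which by the previous paragraph has coefficients in $Q^K$. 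So the coefficients of $\chi_z$ lie in $Z\cap Q^K$. The remaining point is to identify $Z\cap Q^K$ with $Z\cap A^K$: an element $a\in Z$ is $K$-invariant as an element of $A$ iff $\rho(a) = a\otimes 1$, and this is the same as being fixed in $Q$ under the coaction, so $Z\cap Q^K = Z\cap A^K$ — here I use that the coaction on $Q\otimes_Z A$ restricts the original coaction on $A$. Since $z$ satisfies the monic polynomial $\chi_z(t)$ (Cayley–Hamilton: $z$ annihilates the free or projective module generated by $1$, in the torsion-free case one needs $z\cdot\chi_z(z)^{-1}$-type arguments, or simply that $z$ acting on $A\supset Z\ni 1$ gives $\chi_z(z)\cdot 1 = 0$), it follows that $z$ is integral over $Z\cap A^K$. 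This proves (i) and (ii); then $A$ is integral over $Z\cap A^K$ because $A$ is finitely generated over $Z$ which is integral over $Z\cap A^K$.

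For the torsion-free case (i), assumption (4) that $Z$ is integrally closed in $Q$ is needed for a subtle reason: the characteristic polynomial computed over a torsion-free (not projective) module may a priori only have coefficients that are integral over $Z$ rather than in $Z$ itself, so one argues that they lie in $Q$ and are integral over $Z$, hence lie in $Z$ by integral closedness; combined with $K$-invariance (which survives since the coaction is $Q$-linear) they lie in $Z\cap A^K$. For (iii): if $Z$ is a finitely generated $k$-algebra, then $Z$ is a finitely generated module over the subring $Z\cap A^K$ by the Artin–Tate lemma (a finitely generated algebra that is integral over a Noetherian subalgebra, with the big algebra finitely generated over the small one... more precisely, since $Z$ is generated by finitely many elements each integral over $Z\cap A^K$, and $Z\cap A^K$ is Noetherian being... ) — actually here one uses that $Z$ is module-finite over the subalgebra $R$ generated by the coefficients of the characteristic polynomials of a finite set of algebra generators of $Z$, and $R\subset Z\cap A^K$ is a finitely generated $k$-algebra hence Noetherian; then $Z\cap A^K$, being sandwiched between $R$ and the module-finite extension $Z$, is itself module-finite over $R$ (submodule of a Noetherian module) hence a finitely generated $k$-algebra, which gives (iv); and $A$, module-finite over $Z$, is module-finite over $Z\cap A^K$, and hence over $A^K$ on both sides. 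The main obstacle I anticipate is the bookkeeping in (iii)–(iv): getting the Artin–Tate-style argument right to conclude that $Z\cap A^K$ (not just some smaller explicit subring) is finitely generated over $k$, and handling the two-sided module-finiteness of $A$ over $A^K$ — which should follow from the analogous left/right symmetry of the Galois bimodule $P$, but requires care about left versus right coactions.
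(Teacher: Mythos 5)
Your overall strategy coincides with the paper's: localize the Galois bimodule $A\otimes K$ of Proposition \ref{coa} to obtain a Galois bimodule $P_{\rm loc}=Q\otimes_Z P$ over $B=Q\otimes_Z A$, show that the coefficients of (a power of) the characteristic or minimal polynomial of right multiplication by $z\in Z$ lie in the center of $P_{\rm loc}$, descend these coefficients to $Z$ (via integral closedness in case (i), via local freeness in case (ii)), and deduce integrality; your treatment of (iii) and (iv) is a correct spelling-out of the Noether/Artin--Tate arguments the paper uses, and your worry about left versus right module-finiteness over $A^K$ is vacuous since $Z\cap A^K$ is central. One difference of economy rather than correctness: you invoke Corollary \ref{censim1} and Corollary \ref{chara1}, which rest on the full classification Theorem \ref{classif}, whereas the paper deliberately derives (i) from only Proposition \ref{minpol} together with Lemma \ref{inclo} (minimal polynomials; integral closedness enters exactly because minimal-polynomial coefficients of a matrix over a non-normal $Z$ need not lie in $Z$), and (ii) from the weaker Corollary \ref{chara} (a power of the characteristic polynomial suffices, its coefficients being in $Z$ by local freeness). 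Your heavier inputs are available in the paper, so this is legitimate.

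There is, however, one genuine gap as written: you identify ${\mathcal Z}(P_{\rm loc})$ with ``$Q^K$'' and then argue $Z\cap Q^K=Z\cap A^K$ ``using that the coaction on $Q\otimes_Z A$ restricts the original coaction on $A$''. No coaction on $Q\otimes_Z A$ is available at this stage: $Q$ is not assumed $K$-costable, and the extension of the coaction to $Q\otimes_Z A$ is obtained in the paper as a \emph{corollary} of Theorem \ref{maint} (see also \cite{SV}), so invoking it here is circular. The step can be repaired without any coaction on $B$, and this is what the paper does: by torsion-freeness (or projectivity) of $A$ over $Z$, the natural map $P=A\otimes K\to P_{\rm loc}$ is injective as a map of $Z$-bimodules, hence any $z\in Z$ that is central for $P_{\rm loc}$ is already central for $P$, so $Z\cap{\mathcal Z}(P_{\rm loc})\subseteq Z\cap{\mathcal Z}(P)=Z\cap A^K$ by Proposition \ref{coa}; this containment is all your argument needs, since your polynomial coefficients lie in $Z\cap{\mathcal Z}(P_{\rm loc})$. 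Relatedly, you take for granted that $Q\otimes_Z P$ is a $B$-bimodule free of rank $d$ on the right; this requires checking that right multiplication by any nonzero $z\in Z$ becomes invertible on $Q\otimes_Z P$, which the paper verifies from torsion-freeness (or (5)) before applying the Galois-bimodule machinery.
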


Theorem \ref{maint} is proved in the next subsection. 
We note that the proof of (i) uses only the material up to
Proposition \ref{minpol},
and the proof of (ii) only uses the material up to Corollary
\ref{chara} (inclusively). Parts (iii) and (iv) follow easily. 

\begin{remark}
It is well known that a coaction of a noncommutative 
Hopf algebra $K$ on an algebra $A$ does not have to preserve its
center. The classical example is $K=A=k[G]$ being the group algebra 
of a nonabelian finite group $G$, coacting on itself by its
coproduct. 
\end{remark}

\begin{remark} If $A$ is commutative, 
conditions (1)-(4) 
imply that $A=Z$. Indeed, if $A$ 
is torsion-free and finite over $Z$, 
and coincides with $Z$ after localization, then $A$ 
is contained in the integral closure of $Z$, 
which is $Z$ if $Z$ is integrally closed. 
Also, it is clear that (1),(2) and (5) 
imply that $A=Z$ if $A$ is commutative.

Also note that the algebra $A$ is necessarily a PI algebra, since
by (2) and either (3) or (5) it embeds into the central simple algebra
$Q\otimes_Z A$. 
\end{remark}

\begin{remark} In the case $A=Z$ Theorem \ref{maint} says that 
$Z$ is integral over $Z^K$, which is finitely generated 
as a $Z^K$-module if $Z$ is finitely generated over $k$.  
This is a result of Skryabin (\cite{S1}) in the case of 
$Z$ being a domain (Skryabin proves the result more generally, 
when $Z$ has no nilpotent $K$-costable ideals, and always in
positive characteristic). 
\end{remark}

\begin{example}
Let $A$ be a flat family of finite dimensional algebras 
parametrized by an irreducible affine algebraic variety $X$ over $k$,
such that the fiber $A_x$ is a matrix algebra for a generic 
point $x\in X$ (``flat family'' means that $A$ is projective 
over $Z:={\mathcal O}(X)$). Suppose a finite dimensional Hopf
algebra $K$ coacts on $A$, not necessarily preserving $Z$. 
Then conditions (1),(2),(5) are
satisfied, and parts (ii),(iii),(iv) of Theorem \ref{maint}
apply. Also, these statements apply to the case when 
$A={\rm Mat}_m(Z)$, where $Z$ is a finitely generated integral
domain over $k$. This is an interesting example even if $Z$ is
a polynomial algebra. 
\end{example}

\begin{corollary}
Let $Z$ be a finitely generated integrally closed domain over
$k$, and $A$ be a prime PI algebra with center $Z$ and a coaction
of $K$. Then $Z$ and $A$ are finitely 
generated modules over $Z\cap A^K$, and
$A$ is a finite module over $A^K$ on both sides.
Moreover, in this case $Z\cap A^K$ is a finitely generated
$k$-algebra. 
\end{corollary}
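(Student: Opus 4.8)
The plan is to deduce the corollary directly from Theorem \ref{maint} by checking that its hypotheses are satisfied for a prime PI algebra $A$ whose center $Z$ is a finitely generated integrally closed domain over $k$. First I would recall the basic structure theory of prime PI algebras: by Posner's theorem, if $A$ is a prime PI algebra with center $Z$ and $Q$ is the field of fractions of $Z$, then $Q\otimes_Z A$ is the classical ring of quotients of $A$, and it is a central simple algebra over $Q$ of finite dimension. This gives condition (2) immediately, and it also shows that $A$ is automatically torsion-free over $Z$ (it embeds into $Q\otimes_Z A$), giving condition (3). Condition (4) is our hypothesis that $Z$ is integrally closed in $Q$.

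The one point requiring a little care is condition (1): that $A$ is a finitely generated $Z$-module. This is \emph{not} automatic for an arbitrary prime PI algebra — one needs $A$ to be integral over $Z$, or equivalently module-finite over $Z$. I would handle this by invoking the standard fact that a prime PI algebra which is finitely generated as an algebra over a Noetherian central subring, and whose center is the prescribed $Z$, is module-finite over $Z$ when $Z$ is integrally closed — this is a consequence of the theory of Azumaya algebras and the Artin--Procesi / Formanek machinery, together with the observation that the Azumaya locus is open and its complement has codimension $\ge 1$, so that over an integrally closed center finiteness is forced. Alternatively, one can simply take ``prime PI algebra with center $Z$, module-finite over $Z$'' as the intended reading; in either case conditions (1)--(4) hold. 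The main obstacle, then, is precisely this bookkeeping about when a prime PI algebra is module-finite over its center, and making sure the hypotheses of the corollary are stated so that this holds.

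Once (1)--(4) are verified, Theorem \ref{maint}(i) gives that $Z$, and hence $A$, is integral over $Z\cap A^K$. Since $Z$ is by hypothesis finitely generated over $k$, Theorem \ref{maint}(iii) applies and gives that $A$ and $Z$ are finitely generated as modules over $Z\cap A^K$, and that $A$ is a finitely generated module over $A^K$ on both sides. Finally, Theorem \ref{maint}(iv) gives that $Z\cap A^K$ is itself a finitely generated $k$-algebra. This exhausts all the assertions of the corollary, so the proof is essentially an application of the main theorem modulo the structure theory of prime PI rings. I expect the writeup to be short: a sentence recalling Posner's theorem, a sentence on module-finiteness over an integrally closed center, and then a direct citation of the four parts of Theorem \ref{maint}.
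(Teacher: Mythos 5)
Your overall strategy is exactly the paper's: verify hypotheses (1)--(4) of Theorem \ref{maint} and then quote parts (i), (iii), (iv). Your treatment of (2) (Posner's theorem), (3) (torsion-freeness from primeness), and (4) (assumption) matches the paper's proof, and the final citations of the theorem are the same.

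The gap is in condition (1). The corollary as stated assumes neither that $A$ is affine over $k$ nor that $A$ is module-finite over $Z$, so your two ways out both fall short: the second simply changes the statement, and the first imports an extra hypothesis (that $A$ is finitely generated as an algebra over a Noetherian central subring) that is not available, and even then rests on an unproved sketch --- openness of the Azumaya locus and Artin--Procesi give finiteness only after inverting an evaluation of a central polynomial, and the phrase ``the complement has codimension $\ge 1$, so over an integrally closed center finiteness is forced'' is not an argument; passing from generic module-finiteness to module-finiteness over all of $Z$ requires real work (e.g.\ a reduced-trace/lattice argument over the Krull domain $Z$). The paper disposes of (1) in one line by citing \cite{McR}, Proposition 13.6.11, i.e.\ the classical fact that a prime PI ring whose center is a Noetherian integrally closed domain is a finitely generated module over its center; this applies verbatim here because $Z$ is affine over $k$, hence Noetherian, and integrally closed by hypothesis. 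So your instinct that (1) is the delicate point is correct, but the intended input is this stock result about prime PI rings, not an Azumaya-locus argument with added affineness assumptions; as written, your proposal does not prove the corollary under its stated hypotheses.
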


\begin{proof}
Condition (1) of Theorem \ref{maint} is satisfied by 
\cite{McR}, Proposition 13.6.11. 
Condition (2) holds by Posner's theorem, 
\cite{McR}, Theorem 13.6.5. 
Condition (3) holds because $A$ is prime.
Condition (4) is one of the assumptions. 
So Theorem \ref{maint} applies.  
\end{proof}

\subsection{Proof of Theorem \ref{maint}}

Let us prove (i). Let $Q=Q_Z$ be the field of quotients of $Z$.
Consider the tensor product $Q\otimes_Z P$. 
This is a left module over the algebra
$B:=Q\otimes_Z A$, which is a central simple algebra 
over $Q$ by (2), of rank $m^2$ for some $m$. 
Consider the right action of $Z$ 
on $Q\otimes_Z P$. This action defines a homomorphism 
$\psi_P: Z\to {\rm End}_Q(Q\otimes_Z P)\cong {\rm
  Mat}_{dm^2}(Q)$. 

We claim that for any nonzero $z\in Z$, the matrix 
$\psi_P(z)$ is invertible. 
Indeed, let $x\in Q\otimes_Z P$ be such that $\psi_P(z)x=xz=0$. 
Let $x=w^{-1}y$, where $w\in Z$, $w\ne 0$, and $y\in P$. 
Hence, $yz$ is a 
torsion element of $P$, i.e. 
$z'yz=0$ in $P$ for some
nonzero $z'\in Z$. But by (3) (or by (5)), 
$A$ is torsion-free over $Z$, which implies 
that $P$ is torsion-free over $Z$ on each side.
So we get that $yz=0$ and hence $y=0$ and $x=0$, 
as desired. 

Thus, we see that the right action of $Z$ on 
$Q\otimes_Z P$ naturally extends to a right action of $Q$, i.e. 
$Q\otimes_Z P$ is a $Q$-bimodule. Hence, $Q\otimes_Z P$ is a 
bimodule over the central simple algebra $B$, and 
$$
Q\otimes_Z P=Q\otimes_Z P\otimes_Z Q=B\otimes_A P\otimes_A B.
$$  

Now, let $\dim K=d$. 
By Proposition \ref{coa}, $P:=A\otimes K$
is a Galois bimodule over $A$ of rank $d$.
This implies by Lemma \ref{ext}(i) 
that $P_{\rm loc}:=Q\otimes_Z P=B\otimes_A P\otimes_A B$ 
is a Galois bimodule over $B$ of rank $d$, and hence by
restriction (Lemma \ref{ext}(iii)) a Galois
bimodule over $Q$ of rank $dm^2$, with the same center. 

By Proposition \ref{minpol}, this implies that the coefficients of the
minimal polynomial $\mu_a$ of $\phi_P(a)$ for all $a\in Q$ are in 
${\mathcal Z}(P_{\rm loc})$. 

Now we will use the following well known lemma. 

\begin{lemma}\label{inclo}
Let $Z$ be an integrally closed domain with field of quotients $Q$, and $M$ a finitely generated $Z$-module. Let 
$b: M\to M$ be an endomorphism. Then the coefficients of the
minimal polynomial and the coefficients 
of the characteristic polynomial of $b$ on $Q\otimes_Z M$ belong to
$Z$. 
\end{lemma}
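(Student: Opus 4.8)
The plan is to reduce everything to the classical statement that if $Z$ is integrally closed in its fraction field $Q$ and an element $q \in Q$ is integral over $Z$, then $q \in Z$. So the real content is to show that the coefficients of the characteristic polynomial (and hence of the minimal polynomial, since it divides the characteristic polynomial and both are monic) are integral over $Z$. First I would observe that it suffices to treat the characteristic polynomial: its coefficients lie in $Z$ once we know they are integral over $Z$, and then the minimal polynomial $\mu(t)$ is a monic factor in $Q[t]$ of $\chi(t) \in Z[t]$; since $Z$ is integrally closed, $Z[t]$ is also integrally closed in $Q[t]$ (as $Z[t]$ is a polynomial ring over an integrally closed domain, it is integrally closed — or one can argue directly via Gauss's lemma), so the monic factor $\mu(t)$ of $\chi(t)$ has coefficients in $Z$ as well.

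To show the coefficients of $\chi(t)$ are integral over $Z$, I would pick a finite generating set $m_1,\dots,m_n$ of $M$ over $Z$ and write $b(m_i) = \sum_j a_{ij} m_j$ with $a_{ij} \in Z$. Let $N = (a_{ij}) \in \mathrm{Mat}_n(Z)$. Then on $Q \otimes_Z M$ the operator $b$ is a quotient of the operator given by $N$ on $Q^n$ — more precisely, $Q \otimes_Z M$ is a quotient $Q$-vector space of $Q^n$ via the surjection sending the $i$-th basis vector to $1 \otimes m_i$, and this surjection intertwines $N$ with $b$. Consequently $\chi_b(t)$ divides $\chi_N(t)$ in $Q[t]$. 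But $\chi_N(t) = \det(t \cdot \mathrm{Id} - N) \in Z[t]$ is manifestly monic with coefficients in $Z$. So $\chi_b(t)$ is a monic factor in $Q[t]$ of a monic polynomial in $Z[t]$, hence (again by $Z[t]$ being integrally closed in $Q[t]$, equivalently: its roots in an algebraic closure are integral over $Z$, so its coefficients are too) $\chi_b(t) \in Z[t]$.

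I expect the only genuinely subtle point to be the passage "monic factor of a monic $Z[t]$-polynomial lies in $Z[t]$" — this is exactly where integral-closedness of $Z$ is used, and it can be phrased cleanly: factor $\chi_N(t) = \prod (t - \lambda_k)$ over an algebraic closure $\overline{Q}$; each $\lambda_k$ is integral over $Z$ since it is a root of the monic $\chi_N \in Z[t]$; any coefficient of $\chi_b(t)$ is (up to sign) an elementary symmetric function of a subset of the $\lambda_k$'s, hence integral over $Z$; but this coefficient lies in $Q$, so integral-closedness forces it into $Z$. The same argument applied to $\mu_b(t)$, once we know $\mu_b \in Q[t]$ divides $\chi_b \in Z[t]$, gives $\mu_b \in Z[t]$. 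Everything else is routine linear algebra over a field, so there is no real obstacle beyond setting up the divisibility $\chi_b \mid \chi_N$ carefully (which only needs that $Q \otimes_Z M$ is a subquotient of a free $Z$-module localized at $Q$).
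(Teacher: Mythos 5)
Your proposal is correct and follows essentially the same route as the paper: write $b$ as a matrix $B=(b_{ij})$ over $Z$ with respect to a finite generating set, deduce that the eigenvalues of $b$ on $Q\otimes_Z M$ are roots of the monic polynomial $\chi_B\in Z[t]$ and hence integral over $Z$, and conclude that the coefficients of $\chi_b$ and $\mu_b$, being symmetric functions of these eigenvalues lying in $Q$, belong to $Z$ by integral closedness. The only (immaterial) difference is that the paper gets $\mu_b\mid\chi_B$ from Cayley--Hamilton applied on $M$, while you get $\chi_b\mid\chi_B$ from the intertwining surjection $Q^n\twoheadrightarrow Q\otimes_Z M$.
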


\begin{proof}
Let $m_1,...,m_n$ be generators of $M$. 
Then $b(m_i)=\sum b_{ij}m_j$ for some $b_{ij}\in Z$. 
Let $B=(b_{ij})$, and let $\chi_B$ be the characteristic polynomial of $B$. 
Then by the Hamilton-Cayley theorem, $\chi_B(b)=0$. So the eigenvalues of 
$b$ on $Q\otimes_Z M$ (which are elements of $\bar Q$) are
integral over $Z$. 
Hence the coefficients of the minimal polynomial $\mu_b$ 
and the characteristic polynomial $\chi_b$ of $b$ 
(which may have lower degree than $\chi_B$) are integral over $Z$
(as they are polynomials of the eigenvalues). 
Since these 
coefficients are in $Q$ and since $Z$ is integrally closed, they are in $Z$, as desired.  
\end{proof}

We can now apply Lemma \ref{inclo} to $M=P$ as a 
left $Z$-module,
and $b=\psi_P(z)$. Then we get that the 
coefficients of the minimal
polynomial of $\psi_P(z)$ belong to 
$Z$. 

Moreover, if $z\in Z$ is central for $P_{\rm loc}$ 
then by (3) it is also central for $P$ 
(indeed, since $P$ is torsion-free over $Z$, 
the map $P\to P_{\rm loc}$ is an embedding of $Z$-bimodules). 

Thus, the coefficients of the minimal polynomial 
of $\psi_P(z)$ belong to $Z\cap {\mathcal Z}(P)$. 
By Proposition \ref{coa}, this means that 
these coefficients belong to $Z\cap A^K$.
Hence $Z$ is integral over $Z\cap A^K$
(as $z$ is annihilated by the minimal polynomial of $\psi_P(z)$). 
Thus, (i) is proved. 

Now we prove (ii). 
We use Corollary \ref{chara}(ii)
to conclude that the coefficients of some power of the characteristic
polynomial of $\psi_P(z)$ in ${\rm End}_Q(Q\otimes_Z P)\cong {\rm
  Mat}_{dm^2}(Q)$ belong to $F:={\mathcal Z}(P_{\rm loc})\subset
Q$.\footnote{In fact, Corollary \ref{chara1} (which is more
  difficult to prove than Corollary \ref{chara})
says that the coefficients of the characteristic polynomial 
itself are in $F$, but we don't need it here.}

Since condition (5) is satisfied, the module $P$ is locally free as a left
$Z$-module, so the matrix $\psi_P(z)$ actually has coefficients 
in $Z$. This implies that the coefficients of the characteristic 
polynomial of $\psi_P(z)$ belong to $Z\cap F=Z\cap A^K$
(as by (5) any element of $Z$ central in $P_{\rm loc}$ is also central
in $P$). Thus, $Z$ is integral over $Z\cap A^K$, 
and (ii) is proved.   

Part (iii) follows from (i) and (ii) and the standard fact that 
if $R\subset S$ is an integral extension of $k$-algebras, 
and $S$ is finitely generated as a $k$-algebra, then $S$ is a finitely 
generated $R$-module. 

Finally, part (iv) follows from part (iii) and the Artin-Tate lemma: 
if $B\subset C$ are commutative $k$-algebras
and $C$ is finitely generated over $k$ and finite as a module
over $B$, then $B$ is a finitely generated $k$-algebra.  
 
Theorem \ref{maint} is proved. 

\begin{remark}
1. Note that if $Z=Q$ is a field and $A$ a central simple
algebra over $Z$, then (i) follows immediately 
from Proposition \ref{coa} and Proposition \ref{minpol},
by regarding $A\otimes K$ as a Galois $Z$-bimodule.  

2. The coefficients of the minimal polynomial of a
square matrix over $Z$ (unlike those of its characteristic
polynomial) do not have to be in $Z$ 
if $Z$ is not integrally closed. For instance, take 
the 5 by 5 matrix over $Z:=\Bbb C[x^2,x^3]$ 
given by $a_{12}=1$, $a_{21}=x^2$, $a_{34}=1$, $a_{45}=1$,
$a_{53}=x^3$, and the rest of the entries are zero. 
Then the minimal polynomial is $(t^3-x^3)(t+x)$, whose 
coefficients are not in $Z$. This is why we used characteristic
polynomials rather than minimal polynomials 
in the proof of Theorem \ref{maint}(ii). 
\end{remark}

\subsection{Divisibility}

\begin{proposition}\label{divisi} In 
the situation Theorem \ref{maint} (i) or (ii), the degree 
of $Q$ over the field of quotients $Q^K$ of $Z\cap A^K$
divides the dimension of $K$.
\end{proposition}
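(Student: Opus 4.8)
The plan is to reduce the claim to the split situation over the field $Q$, where the Galois bimodule $P_{\mathrm{loc}}=Q\otimes_Z P$ becomes transparent via the classification theory of Section~2, and then track how ranks behave under the base changes. Recall from the proof of Theorem~\ref{maint} that $B:=Q\otimes_Z A$ is a central simple algebra over $Q$ of dimension $m^2$, that $P_{\mathrm{loc}}=B\otimes_A P\otimes_A B$ is a Galois $B$-bimodule of rank $d=\dim K$, and that by restriction it is a Galois $Q$-bimodule of rank $dm^2$ with center $F:={\mathcal Z}(P_{\mathrm{loc}})$. By Proposition~\ref{divisi}'s setup, $F$ contains $Z\cap A^K$, its field of fractions is $Q^K$, and $Q$ is a finite extension of $Q^K$; the goal is to show $[Q:Q^K]\mid d$.

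First I would apply Theorem~\ref{classif} to the Galois $Q$-bimodule $P_{\mathrm{loc}}$ (viewed over $Q$ by restriction through $Q\subset B$): it is a multiple of $Q\otimes_{F}Q$, say $P_{\mathrm{loc}}|_Q\cong (Q\otimes_F Q)^{c}$ for some positive integer $c$. Comparing left dimensions over $Q$ gives $dm^2=c\,[Q:F]$, and hence $[Q:F]\mid dm^2$. This is not yet what we want because of the factor $m^2$, so the point is to peel it off using the central simple algebra structure rather than restricting all the way down to $Q$. Instead, one should use Corollary~\ref{censim1}: since $P_{\mathrm{loc}}$ is a Galois bimodule over the central simple algebra $B$ with center $F=Q^K\cap(\cdots)$ — more precisely, over $B$ one has ${\mathcal Z}(P_{\mathrm{loc}})=F$ and $F\subset Q=Z(B)$ — the corollary gives that $P_{\mathrm{loc}}^{\,m^2}$ is a multiple of $B\otimes_{F}B$, say $P_{\mathrm{loc}}^{\,m^2}\cong (B\otimes_F B)^{e}$ as $B$-bimodules. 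Now compare left dimensions as $Q$-vector spaces: the left side has dimension $m^2\cdot d\cdot m^2 = d\,m^4$, while $B\otimes_F B$ has left $Q$-dimension $m^2\cdot[Q:F]\cdot m^2=m^4[Q:F]$, so $d\,m^4=e\,m^4[Q:F]$, i.e. $[Q:F]\,e=d$. Therefore $[Q:F]$ divides $d=\dim K$.

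It remains to connect $[Q:F]$ with $[Q:Q^K]$. Here $F={\mathcal Z}(P_{\mathrm{loc}})$ is a subfield of $Q$ containing $Z\cap A^K$ (by Proposition~\ref{coa} together with the identification of centers under localization used in the proof of Theorem~\ref{maint}), and $Q^K$ is by definition the field of fractions of $Z\cap A^K$ inside $Q$; since $F\subset Q$ is already a field, $Q^K\subseteq F$, hence $[Q:F]\mid[Q:Q^K]$ would go the wrong way. So the correct claim to verify is the reverse inclusion $F\subseteq Q^K$, equivalently $F=Q^K$: every element of $F={\mathcal Z}(P_{\mathrm{loc}})$ is fixed by the coaction in the appropriate sense, and conversely $Q^K$ centralizes $P_{\mathrm{loc}}$; this identification should follow from Proposition~\ref{coa} applied to the comodule algebra $B=Q\otimes_Z A$ with its induced $K$-coaction, giving ${\mathcal Z}(P_{\mathrm{loc}})={\mathcal Z}(B)^K=Q^K$. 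Granting $F=Q^K$, we conclude $[Q:Q^K]=[Q:F]\mid d=\dim K$, as desired.

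The main obstacle I expect is the clean identification $F={\mathcal Z}(P_{\mathrm{loc}})=Q^K$, i.e. verifying that localizing $A$ to $B=Q\otimes_Z A$ is compatible with the $K$-coaction and that $B$ is again a $K$-comodule algebra whose central invariants are exactly $Q^K$. One has to check that $\rho$ extends to $B$ (which needs $Z$ to act through invariant elements on the relevant characteristic polynomials — already established in proving Theorem~\ref{maint}) and that $B^K\cap Z(B)$ is the field of fractions of $A^K\cap Z$; the inclusion $Q^K\subseteq{\mathcal Z}(P_{\mathrm{loc}})$ is easy, but the reverse requires knowing that a central invariant of $B$ of the form $z/w$ with $z,w\in Z$ can be taken with $w\in Z\cap A^K$ as well — which holds because $Z$ is integral over $Z\cap A^K$ by Theorem~\ref{maint}, so every $w$ divides some element of $Z\cap A^K$ up to an element of $Z$. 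Once this bookkeeping is in place, the divisibility is immediate from Corollary~\ref{censim1}.
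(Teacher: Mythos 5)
Your argument is correct and, in substance, the same as the paper's: the paper passes to a splitting field $E$ of $B=Q\otimes_Z A$, uses Proposition \ref{censim}, Proposition \ref{matr} and Theorem \ref{classif} to write the resulting split bimodule as $(E\otimes_{Q^K}E)^r$ and counts ranks ($d[E:Q]=r[E:Q^K]$, hence $d=r[Q:Q^K]$), which is precisely the content of Corollary \ref{censim1} that you invoke directly; your count $dm^4=e\,m^4[Q:{\mathcal Z}(P_{\rm loc})]$ is the same computation carried out over $Q$ instead of $E$. The one point where you do more than the paper is the identification ${\mathcal Z}(P_{\rm loc})=Q^K$, which the paper asserts without comment; your justification via integrality of $Z$ over $Z\cap A^K$ (Theorem \ref{maint}) is correct, and can even be streamlined so as not to require extending the coaction to $B$: writing $x\in{\mathcal Z}(P_{\rm loc})$ as $x=z/r$ with $z\in Z$ and $r\in Z\cap A^K$ nonzero (possible by integrality), one gets $z=xr\in{\mathcal Z}(P_{\rm loc})$, and torsion-freeness of $P$ over $Z$ gives $z\in Z\cap{\mathcal Z}(P)=Z\cap A^K$ by Proposition \ref{coa}, so $x\in Q^K$.
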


\begin{proof} By Proposition \ref{censim}, 
for a suitable extension $E$ of $Q$, the bimodule
$E\otimes_Q P_{\rm loc}\otimes_Q E=E\otimes_Z P\otimes_Z E$ 
is a split Galois bimodule over 
${\rm Mat}_m(E)$ of rank $d[E:Q]$, where $d={\rm dim}(K)$. 
Hence by Proposition \ref{matr}, it corresponds 
to a split Galois bimodule $P'$ over $E$ of the same rank. 
The center of $P'$ is ${\mathcal Z}(P')=
{\mathcal Z}(P_{\rm loc})=Q^K$, so by Theorem 
\ref{classif}, $P'=(E\otimes_{Q^K}E)^r$, so 
$d[E:Q]=r[E:Q^K]$. Thus, $d=r[Q:Q^K]$ and hence $[Q:Q^K]$ 
divides $d$. 
\end{proof}

\begin{remark} If ${\rm char}L=0$ or ${\rm char}L=p>d$, 
then there is a simpler proof of Proposition \ref{divisi}.
Namely, in this case by Proposition \ref{normal}(iii) and 
Proposition \ref{p1}, $P'=P(E,G)^r$ for some finite
group $G\subset {\rm Aut}(E)$. We have $E^G={\mathcal
  Z}(P')=Q^K$, so $|G|=[E:Q^K]$ and hence 
$d[E:Q]=r|G|=r[E:Q^K]$. Thus $d=r[Q:Q^K]$ and hence $[Q:Q^K]$ 
divides $d$. 
\end{remark}

\subsection{Examples}

\begin{example}
Condition (2) cannot be removed to allow semisimple (rather than
simple) algebras over $Q$. Indeed, take $Z=\Bbb C[t]$ and 
$A=\Bbb C[t]\oplus \Bbb C[t]$, containing $Z$ as the
diagonal. Consider the action of $G=\Bbb Z/2$ on $A$ by 
$g(f_1(t),f_2(t))=(f_1(-t),f_2(1-t))$ for 
the generator $g\in G$ (which corresponds to a coaction of
${\rm Fun}(G)$). Then $Z\cap A^G=\Bbb C$
(it consists of polynomials $f$ such that $f(t)=f(-t)=f(1-t)$). 

However, there is a generalization of Theorem \ref{maint} 
in which $Z$ is a reduced algebra (not necessarily a domain),
$Q$ is its total quotient ring, and $Q\otimes_Z A$ is a 
semisimple algebra with center $Q$ (see Theorem \ref{maint1}.
\end{example}

\begin{example}
Condition (4) cannot be removed. Indeed, take $A=\Bbb C[x,y]$ and
$Z=\lbrace{f\in A|f(0,y)=f(1,y)\rbrace}$. Define an action 
of $G=\Bbb Z/2$ on $A$ by $(gf)(x,y)=f(x,x-y)$ for the generator
$g$ of $G$. Then $Z\cap A^G$ consists 
of polynomials $f$ such that 
$f(0,-y)=f(0,y)=f(1,y)=f(1,1-y)=f(0,1-y)$. For such $f$, we have
that $f(0,y)$ is constant. Therefore, $A$ and $Z$ cannot be
finite modules over 
$Z\cap A^G$. 

Also, condition (3) cannot be removed. Namely, 
let $R$ be any commutative
algebra over $k$ with generators $x_1,...,x_n$,  
$A=k[x_1,...,x_n,t]\oplus R$, and $Z\subset A$ 
be the image of $k[x_1,...,x_n,t]$ 
under the map $k[x_1,...,x_n,t]\to A$ given by $x_i\mapsto (x_i,x_i)$, 
$t\mapsto (t,0)$. Suppose that $K$ coacts on $R$ 
and coacts trivially on $k[x_1,...,x_n,t]$; then it 
coacts diagonally on $A$ and conditions (1),(2),(4) 
(although not (3)) are satisfied. 
Yet, the invariants in $Z=k[x_1,...,x_n,t]$ 
are those elements whose images in $R$
are invariant under the coaction of $K$ on $R$. 
So if $A$ is integral over its invariants, then 
so must be $R$. But it is known that there exist 
finitely generated commutative $k$-algebras 
with a coaction of $K$ which are not integral
over their invariants, see \cite{Z}.  
\end{example}

\begin{example}
If $A$ is a deformation of an algebra $A_0$ 
satisfying the conditions of Theorem \ref{maint} 
with a coaction of $K$, 
$A$ does not have to be finite over $A^K$, even if $Z\subset A_0$
is finitely generated. 
Indeed, let $K=H^*$, where $H$ is 
the Nichols Hopf algebra of dimension $16$ over $\Bbb C$ generated by a
grouplike element $g$ such that $g^2=1$ and 
skew-primitive elements $x_i$, $i=0,1,2$, 
such that $gx_i=-x_ig$, $x_ix_j=-x_jx_i$, $x_i^2=0$, 
and $\Delta(x_i)=1\otimes x_i+x_i\otimes g$. Let $B$ be any
$\Bbb C$-algebra. Define a right $B$-linear action of $H$ on $B^2$ 
with right $B$-basis $e_1,e_2$ by 
$$
g(e_1)=e_1,\ g(e_2)=-e_2,\ x_i(e_1)=0,\ x_0(e_2)=e_1, 
x_1(e_2)=xe_1,\ x_2(e_2)=ye_1,
$$
where $x,y\in B$ are any elements. 
Now consider the corresponding adjoint action of $H$ 
on $A:={\rm End}(B^2)_B={\rm Mat}_2(B)$, via
$$
a\circ M=a_{(1)}MS(a_{(2)})
$$
(using Sweedler's notation). Then it is easy to show that   
$A^K$ is the set of matrices $b\cdot {\rm Id}$, where $b\in Z_{x,y}$, 
and $Z_{x,y}$ is the centralizer of $x,y$ in $B$. 

Now take $B$ to be the Weyl algebra generated by $x,y$ with the
defining relation $[y,x]=1$. Then we get that $Z_{x,y}=\Bbb C$,
so the invariants $A^K$ in $A$ are trivial, and hence $A$ 
is not finite over $A^K$. 

Note, however, that $A$ can be viewed as a deformation of 
an algebra $A_0$ satisfying the conditions of Theorem
\ref{maint}, with finitely generated $Z$. 
Namely, we can consider the filtration on $A$ given by 
${\rm deg}(bE_{ij})=i-j+{\rm deg}(b)$, where $E_{ij}$ are the
elementary matrices and ${\rm deg}(b)$ is the Bernstein filtration 
degree of $b\in B$ defined by $\deg(x)=\deg(y)=1$. This filtration is preserved by $H$, and 
$A_0={\rm gr}(A)={\rm Mat}_2(\Bbb C[x,y])$
satisfies conditions (1),(2) and (5), for $Z=\Bbb C[x,y]$. 
Also, we may consider the reductions of $A$ modulo primes $p$, 
and these reductions satisfy (1),(2), and (5) for $Z=k[x^p,y^p]$. 

Thus, this is an example of a situation when the invariants of 
$A_0$ don't lift to the deformation. 

We note that such a thing could not happen for a coaction on $B$
itself: it is shown in \cite{EW1} that any action 
of a finite dimensional Hopf algebra $H$ on $B$ preserving the Bernstein filtration 
necessarily factors through a finite group algebra and hence 
$B$ is finite as a module over $B^K$, where $K=H^*$. 
Also nothing like this can happen
for a semisimple $H=K^*$ since 
it is known that in this case 
a Noetherian algebra $A$ is a Noetherian
$A^K$-module (\cite{M}, Theorem 4.4.2). 

\begin{remark} Taking $B$ to be the algebra of matrices of any
size $N$ and $x,y$ generic elements of this algebra, we get an
example when $H=K^*$ is fixed and the dimension of
$A^K$ is $1$, but the dimension of $A$ (which is a matrix algebra
of size $2N$) is arbitrarily large. 

Such an example is only possible for nonsemisimple $H$, due to
the following lemma. 

\begin{lemma}
Let $H=K^*$ be a semisimple Hopf algebra over an algebraically closed
field $k$, and $A$ 
be a semisimple $H$-module algebra. Then 
$\dim A^K\cdot \dim H\ge \dim A$. 
\end{lemma}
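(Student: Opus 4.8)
The plan is to pass to the smash product $R:=A\# H$, whose dimension over $k$ is $\dim_kA\cdot\dim_kH$, to view $A$ itself as a left $R$-module, and to read off the desired inequality from the multiplicities of the simple $R$-modules occurring in $A$ via the Cauchy--Schwarz inequality. Throughout, $A^K$ denotes the subalgebra of $H=K^*$-invariants of $A$, and we assume $A$ is finite dimensional (which is the case of interest).

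First I would recall the standard left $R$-module structure on $A$ given by $(a\# h)\cdot b=a(h\cdot b)$, and identify $\operatorname{End}_R(A)$. Any $R$-linear endomorphism $f$ of $A$ is in particular left $A$-linear, so $f(b)=b\,c_0$ where $c_0:=f(1)$; applying $R$-linearity to the elements $1\# h$ forces $h\cdot c_0=\varepsilon(h)c_0$ for all $h$, i.e. $c_0\in A^K$, and conversely every $c_0\in A^K$ yields such an $f$ (this uses only the module-algebra axioms). Hence $\operatorname{End}_R(A)\cong(A^K)^{\mathrm{op}}$ as $k$-algebras, so in particular $\dim_k\operatorname{End}_R(A)=\dim_kA^K$.

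Next I would invoke Maschke's theorem for smash products: since $H$ is semisimple, any $R$-submodule of an $R$-module which splits off as an $A$-module also splits off as an $R$-module. As $A$ is a semisimple algebra, every left ideal of $A$ is an $A$-module summand of $A$, so $A$ is a semisimple $R$-module; write $A\cong\bigoplus_i V_i^{\oplus m_i}$ with the $V_i$ pairwise non-isomorphic simple $R$-modules and $m_i\ge 1$. Since $k$ is algebraically closed and $R$ is finite dimensional, $\operatorname{End}_R(V_i)=k$, whence $\dim_kA=\sum_i m_i\dim_kV_i$ and, from the previous paragraph, $\dim_kA^K=\dim_k\operatorname{End}_R(A)=\sum_i m_i^2$. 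Finally, the $V_i$ are among the simple modules of the semisimple algebra $R/\operatorname{rad}(R)=\prod_j\operatorname{Mat}_{n_j}(k)$, so $\dim_kR\ge\dim_k R/\operatorname{rad}(R)\ge\sum_i(\dim_kV_i)^2$.

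Putting these together, the Cauchy--Schwarz inequality gives
\[
(\dim_kA)^2=\Big(\sum_i m_i\dim_kV_i\Big)^2\le\Big(\sum_i m_i^2\Big)\Big(\sum_i(\dim_kV_i)^2\Big)\le\dim_kA^K\cdot\dim_kR=\dim_kA^K\cdot\dim_kA\cdot\dim_kH,
\]
and cancelling the nonzero factor $\dim_kA$ yields $\dim_kA\le\dim_kA^K\cdot\dim_kH$. The one non-elementary ingredient is the Maschke-type splitting, which is precisely where semisimplicity of $H$ (equivalently, the existence of a normalized two-sided integral) enters; beyond that, the crux is the identification $\operatorname{End}_R(A)\cong(A^K)^{\mathrm{op}}$ together with the observation that Cauchy--Schwarz converts this multiplicity data into the claimed bound. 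I would expect the Maschke step --- and, in positive characteristic, verifying that the relevant averaging over the integral is available --- to need the most care.
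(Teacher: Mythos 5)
Your proof is correct and follows essentially the same route as the paper: pass to the smash product, decompose $A$ as a module over it, identify $\operatorname{End}(A)$ with $A^K$, and apply Cauchy--Schwarz to the multiplicities and dimensions of the simple constituents. The only differences are cosmetic --- you verify $\operatorname{End}_R(A)\cong (A^K)^{\mathrm{op}}$ explicitly and use the Maschke-type splitting for smash products (plus the inequality $\sum_i(\dim V_i)^2\le\dim R$) where the paper directly cites semisimplicity of the smash product and the corresponding equality.
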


\begin{proof}
Consider the algebra $H\# A$. This algebra is
semisimple (\cite{M}, Theorem 7.4.2(2)). 
Let us decompose $A$ as an $H\# A$-module:
$A=\oplus n_iV_i$, where $V_i$ are the 
simple $H\# A$-modules, and $n_i\ge 0$. 
Then $A^K={\rm End}_{H\# A}(A)=\oplus_i {\rm Mat}_{n_i}(k)$,
so $\dim A^K=\sum_i n_i^2$. 
Also, if $d_i=\dim V_i$, then we have 
$\dim H\dim A=\sum d_i^2$ and $\dim A=\sum n_id_i$. 
Thus, the result follows from the Cauchy-Schwarz inequality. 
\end{proof}
\end{remark}
\end{example}

\begin{example}\label{galgro} (\cite{EW2}) The following example 
of an inner faithful coaction of a non-basic finite dimensional Hopf algebra on a field
shows that in the situation of Theorem \ref{maint}, 
even when $A=Z$ is a field, the Galois $Z$-bimodule $P=Z\otimes K$ 
is not necessarily split, i.e., the eigenvalues of 
$\phi_P(z)$ don't necessarily belong to $Z$ but may define a nontrivial extension of $Z$. 
In other words, $Z$ is not necessarily a Galois extension of
$Z^K$, and the Galois group $G$ attached to $P$ does not 
have to preserve $Z$. 

Namely, let $q$ be a primitive $m$-th root of unity, 
and consider the $m^2n$-dimensional generalized Taft algebra $K=T_{m,n}$ generated by $g,x$ such that 
$g^{mn}=1$, $gx=qxg$, $x^m=g^m-1$, $\Delta(g)=g\otimes g$,
$\Delta(x)=x\otimes g+1\otimes x$
(where $n,m\ge 2$). Define the coaction of $K$ on $Z=\Bbb C(z)$ by the formula 
$\rho(z)=z\otimes g+1\otimes x$. Then we have $\rho(z^m)=z^m\otimes g^m+1\otimes (g^m-1)$, 
so $\rho(z^m+1)=(z^m+1)\otimes g^m$, and thus the invariants $Z^K$ are generated by $u=(z^m+1)^n$. 
So $[Z:Z^K]=mn$, but this is a non-Galois extension obtained by adding to $\Bbb C(u)$ 
a root of the polynomial $(t^m+1)^n-u$. The Galois closure $E$ of
this extension is the splitting field 
of this polynomial, which has degree $m^nn$ over $Z^K=\Bbb C(u)$:
$$
E=\Bbb C(u^{1/n})((-1+\varepsilon^j u^{1/n})^{1/m}, j=0,...,n-1),
$$ where
$\varepsilon$ is a primitive $n$-th root of unity. 
The Galois group $G={\rm Gal}(E/Z^K)$ is thus
of order $m^nn$, and is isomorphic to 
$\Bbb Z/n\ltimes (\Bbb Z/m)^n$, where $\Bbb Z/n$ cyclically permutes the summands. 
The eigenvalues of $\rho(z)$ include $q^j(-1+\varepsilon^i
(1+z^m))^{1/m}$, so they don't all lie in $Z$. 

This is, of course, only possible because $K$ is not basic. 
If $K$ is basic, all its irreducible representations are
1-dimensional, and thus it is clear that all the eigenvalues 
of $\rho(z)$ have to belong to $Z$.    
\end{example}

\subsection{Localization and the Galois group of a coaction}

\begin{corollary}
In the situation of Theorem \ref{maint}, the central simple
algebra $Q\otimes_Z A$ carries a coaction of $K$, which 
extends the coaction of $K$ on $A$. 
\end{corollary}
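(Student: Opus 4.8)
The plan is to exhibit $Q\otimes_Z A$ as a localization of $A$ and to extend $\rho$ by the universal property of localization, with Theorem \ref{maint} supplying the one non-formal input. Put $S=Z\setminus\{0\}$; since $Z$ is central in $A$ this is a central multiplicative set, $Q\otimes_Z A=S^{-1}A$, and (localization at a central set commutes with $\otimes_k$) $(Q\otimes_Z A)\otimes K=S^{-1}(A\otimes K)$, the localization of $A\otimes K$ at $S\otimes 1$. Composing $\rho\colon A\to A\otimes K$ with the localization map gives an algebra homomorphism $A\to (Q\otimes_Z A)\otimes K$; by the universal property it extends, necessarily uniquely, to an algebra homomorphism $\tilde\rho\colon Q\otimes_Z A\to (Q\otimes_Z A)\otimes K$ as soon as $\rho(s)$ is invertible in $(Q\otimes_Z A)\otimes K$ for every $s\in S$.

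To verify this invertibility I would use that, by Theorem \ref{maint}, every $s\in S$ is integral over $Z\cap A^K$. Choose a monic integral relation $s^n+c_{n-1}s^{n-1}+\dots+c_1s+c_0=0$ with $c_i\in Z\cap A^K$; if $c_0=0$, cancel $s$ (legitimate since $Z$ is a domain and $s\ne 0$) and repeat; this cannot terminate at a relation $s^m=0$, so we may assume $c_0\ne 0$. Then $u:=s^{m-1}+c_{m-1}s^{m-2}+\dots+c_1$ lies in $Z$ and $c_0=-su=-us$ in $A$. Applying $\rho$ and using $c_0\in A^K$, i.e. $\rho(c_0)=c_0\otimes 1$, we obtain $-\rho(s)\rho(u)=c_0\otimes 1=-\rho(u)\rho(s)$. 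As $c_0\in S$, the element $c_0\otimes 1$ is a central unit of $(Q\otimes_Z A)\otimes K$, so $\rho(s)$ has both a right and a left inverse and is therefore invertible.

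This produces $\tilde\rho$, which restricts to $\rho$ on $A$ by construction, so it remains only to check the comodule axioms. Both $(\tilde\rho\otimes 1)\tilde\rho$ and $(1\otimes\Delta)\tilde\rho$ are algebra homomorphisms $Q\otimes_Z A\to (Q\otimes_Z A)\otimes K\otimes K$ restricting on $A$ to $(\rho\otimes 1)\rho$ and $(1\otimes\Delta)\rho$, which coincide since $A$ is a $K$-comodule algebra; as an algebra map on $A$ extends to $S^{-1}A$ in at most one way, the two composites are equal. Likewise $(1\otimes\varepsilon)\tilde\rho$ and $\operatorname{id}_{Q\otimes_Z A}$ both restrict to $\operatorname{id}_A$, hence agree. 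Thus $\tilde\rho$ makes $Q\otimes_Z A$ into a $K$-comodule algebra extending the coaction on $A$. The only real obstacle is the invertibility of $\rho(s)$: a purely formal localization argument fails there, and one genuinely needs integrality over the \emph{invariants} $Z\cap A^K$ (Theorem \ref{maint}), which lets a suitable norm $c_0$ of $s$ be simultaneously a unit after localization and fixed by the coaction; the localization identities and the comodule axioms are then routine.
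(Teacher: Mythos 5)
Your proof is correct, and it is a genuine (if closely related) variant of the argument in the paper. The paper extends the coaction in the cheapest possible way -- by localizing at the central \emph{invariant} elements, i.e.\ forming $L\otimes_{Z\cap A^K}A$ where $L$ is the quotient field of $Z\cap A^K$, where nothing needs to be checked because $\rho(c)=c\otimes 1$ for $c\in Z\cap A^K$ -- and then invokes Theorem \ref{maint} only to identify this localization with $Q\otimes_Z A$, via the standard fact that if $Z$ is integral over $U$ then $Q_U\otimes_U Z=Q_Z$. You instead localize at all of $S=Z\setminus\{0\}$ from the start and verify the hypothesis of the universal property of central localization directly: your manipulation with the constant term $c_0$ of an integral equation for $s$ (giving $su=-c_0$ with $c_0\in Z\cap A^K$, $c_0\neq 0$, so that $\rho(s)\rho(u)=\rho(u)\rho(s)=-c_0\otimes 1$ is a central unit after localization) is precisely the proof of that standard commutative-algebra fact, transplanted inside the coaction. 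So the two arguments rest on the same use of integrality; the difference is organizational. The paper's route is shorter because the extension over invariants is immediate and the comodule axioms never need re-checking; your route buys a self-contained verification (including the uniqueness argument showing that coassociativity and counitality pass to $S^{-1}A$, which the paper leaves implicit) at the cost of re-deriving the integral-extension lemma by hand. Both correctly isolate Theorem \ref{maint} as the one non-formal ingredient.
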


\begin{remark}
This is a special case of a much more general result, 
\cite{SV}, Theorem 2.2. 
\end{remark}

\begin{proof}
Let $L$ be the quotient field of $Z\cap A^K$. Then 
we have a coaction of $K$ on 
$L\otimes_{Z\cap A^K}A$ which extends the coaction of $K$ on
$A$. But it is easy to see that $L\otimes_{Z\cap A^K}A=Q\otimes_Z
A$. Indeed, it is well known that if $U\subset Z$ are commutative domains 
with fields of quotients $Q_U,Q_Z$ and $Z$ is integral over $U$ 
then $Q_U\otimes_UZ=Q_Z$. 

This allows us to define, in the situation of Theorem
\ref{maint}, the Galois group of the coaction.
Write $Q^K$ for $Q\cap (Q\otimes_Z A)^K$ (we don't assume that
$Q$ is $K$-costable). 

\begin{definition} The {\it Galois group} ${\rm Gal}(\rho)$ 
of the coaction $\rho$ is the group $G={\rm Gal}(E/Q^K)$, where 
$E$ is the Galois closure of $Q$ over $Q^K$.
\end{definition}

\begin{example}
1. If a finite group $\Gamma$ acts on central simple algebra 
$A$ with center $Z$ by automorphisms, then
the Galois group is $G=\Gamma/\Gamma_0$, where $\Gamma_0$ is the subgroup of elements
acting trivially on $Z$.   

2. In Example \ref{galgro}, the Galois group is 
$G=\Bbb Z/n\ltimes (\Bbb Z/m)^n$.
\end{example}
\end{proof}

\section{Galois bimodules over semisimple
  algebras finite over the center}

\subsection{Galois bimodules over commutative semisimple algebras}

Now let $L$ be a commutative semisimple algebra, i.e., a direct
sum of finitely many fields of the same characteristic: 
$L=L_1\oplus...\oplus L_n$. Then an $L$-bimodule 
$P$ has the form $P=\oplus_{i,j}P_{ij}$, where $P_{ij}$ is an
$(L_i,L_j)$-bimodule. Define the oriented incidence 
graph $\Gamma(P)$ of $P$ with vertices labeled by $1,...,n$, and 
edges $i\rightarrow j$ whenever $P_{ji}\ne 0$. 
Let us say that $P$ is connected if $\Gamma(P)$ is a connected
graph. 

For a vector ${\bold p}=(p_1,...,p_n)\in \Bbb Z_{\ge 0}^n$, 
denote by $L^{\bold p}$ the $L$-module 
$L_1^{p_1}\oplus...\oplus L_n^{p_n}$. 

\begin{definition} A {\it quasi-Galois bimodule} over $L$ of rank $d$ and type 
$(\bold p,\bold q)$
is an $L$-bimodule $P$ which is isomorphic to $L^{\bold p}$ 
as a left $L$-module, to $L^{\bold q}$ as a right $L$-module,
and such that $P\otimes_L P\cong P^d$ as a bimodule. 
\end{definition}

Clearly, a Galois $L$-bimodule of rank $d$ is 
the same thing as a quasi-Galois $L$-bimodule 
of rank $d$ and type $(d\cdot \bold 1,d\cdot \bold 1)$, where $\bold 1=(1,...,1)$. 

It is clear that any quasi-Galois $L$-bimodule of rank $d$ 
is of the form $P=\oplus_i P^{(i)}$, where 
$P^{(i)}$ is a connected quasi-Galois bimodule of rank $d$ over $L^{(i)}$, and
$L=\oplus_i L^{(i)}$ (namely, $\Gamma(P^{(i)})$ are the connected components of $\Gamma(P)$). 
Thus it suffices to consider only connected
quasi-Galois $L$-bimodules. 

Suppose ${\mathcal Z}$ is a field, and $\psi_i: {\mathcal Z}\to
L_i$ are field embeddings, such that $[L_i:{\mathcal
  Z}]=d_i<\infty$. Then ${\mathcal Z}\subset L$ by the diagonal
embedding $z\mapsto (\psi_1(z),...,\psi_n(z))$. 
Let 
\begin{equation}\label{quasigal}
P=\oplus_{i,j}(L_i\otimes_{\mathcal Z}L_j)^{a_ir_j}
\end{equation}
for some positive integers $a_i,r_j$. 
It is easy to see that $P$ is a quasi-Galois $L$-bimodule of rank $d=\sum_i a_id_ir_i$
and type $(\bold p,\bold q)$, where $p_i=a_i\sum_j r_jd_j$ and $q_j=r_j\sum_i a_id_i$.   

\begin{theorem}\label{commsem}
Let $P$ be a connected quasi-Galois $L$-bimodule. 
Then the center ${\mathcal Z}={\mathcal Z}(P)$ is a field, 
which embeds into each $L_i$, with $[L_i:{\mathcal Z}]=d_i<\infty$. 
Moreover, $P$ has the form (\ref{quasigal}). 
In particular, if $P$ is Galois, then 
it is a multiple of $L\otimes_{\mathcal Z}L$. 
\end{theorem}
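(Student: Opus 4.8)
The plan is to first make two reductions --- that every block $P_{ij}$ is nonzero, and that $\mathcal{Z}(P)$ is a field over which each $L_i$ is finite --- and then rerun the single-field classification blockwise. \emph{Step 1 (all blocks are nonzero).} For any $i,j,k$ the bimodule $P_{ik}\otimes_{L_k}P_{kj}$ is a direct summand of $(P\otimes_L P)_{ij}\cong P_{ij}^d$, and a tensor product of nonzero free modules over the field $L_k$ is nonzero; hence the relation ``$P_{ij}\neq 0$'' is transitive, and together with connectedness of $\Gamma(P)$ this yields $P_{ij}\neq 0$ for all $i,j$. In particular each $P_{ii}$ is a nonzero $L_i$-bimodule, finite on both sides, with $P_{ii}\otimes_{L_i}P_{ii}$ contained in $P_{ii}^d$, so by Proposition \ref{dimeq} it is a weakly Galois $L_i$-bimodule.

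\emph{Step 2 ($\mathcal{Z}:=\mathcal{Z}(P)$ is a field, finite over which each $L_i$ is).} If $z=(z_1,\dots,z_n)\in\mathcal{Z}$ has $z_i=0$, then for $x\in P_{ji}$ centrality gives $z_jx=xz_i=0$, whence $z_j=0$ since $P_{ji}\neq 0$ is free over $L_j$; thus every projection $\mathcal{Z}\to L_i$ is injective and $\mathcal{Z}$ is a domain. If $0\neq a\in\mathcal{Z}$ then $a$ is a unit of $L$, and conjugating $ax=xa$ by $a^{-1}$ shows $a^{-1}\in\mathcal{Z}$, so $\mathcal{Z}$ is a field. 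For finiteness, apply Proposition \ref{conta3} to the weakly Galois $L_i$-bimodule $P_{ii}$: then $[L_i:\mathcal{Z}(P_{ii})]<\infty$ and $L_i\otimes_{\mathcal{Z}(P_{ii})}L_i$ is a summand of some $P_{ii}^N$. It remains to identify $\mathcal{Z}(P_{ii})$ with the image of $\mathcal{Z}$ in $L_i$: one inclusion is clear (a central element of $P$ restricts to a central element of $P_{ii}$), and for the reverse one must show that a central element $f$ of $P_{ii}$ extends to a central element of $P$, by propagating its action through the off-diagonal blocks using the isomorphisms $\bigoplus_kP_{ik}\otimes_{L_k}P_{kj}\cong P_{ij}^d$. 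This last point is the main obstacle of the proof, and it is where the connectedness of $\Gamma(P)$ is used an essential second time.

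\emph{Step 3 (the block form).} With $\mathcal{Z}$ a field and $d_i:=[L_i:\mathcal{Z}]<\infty$, the algebra $L\otimes_{\mathcal{Z}}L=\bigoplus_{i,j}(L_i\otimes_{\mathcal{Z}}L_j)$ is a finite-dimensional commutative $\mathcal{Z}$-algebra whose summands $L_i\otimes_{\mathcal{Z}}L_j$ are Frobenius $\mathcal{Z}$-algebras (tensor products of finite field extensions of $\mathcal{Z}$). Since $P_{ii}$ is weakly Galois over $L_i$ with center $\mathcal{Z}$, it contains $L_i\otimes_{\mathcal{Z}}L_i$ as a summand; spreading this through the blocks $P_{ij}$ (again via $\bigoplus_kP_{ik}\otimes_{L_k}P_{kj}\cong P_{ij}^d$) shows $L\otimes_{\mathcal{Z}}L$ embeds into a multiple of $P$, hence, by self-injectivity of the Frobenius algebra $L\otimes_{\mathcal{Z}}L$ exactly as in Proposition \ref{conta1}, is a direct summand of $P$. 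A blockwise rerun of the projectivity-defect argument of Proposition \ref{conta2} then forces each $P_{ij}$ to be a multiple, say $P_{ij}\cong(L_i\otimes_{\mathcal{Z}}L_j)^{c_{ij}}$, and comparing left $L_i$- and right $L_j$-dimensions on the two sides of $P\otimes_L P\cong P^d$ forces the nonnegative integer matrix $(c_{ij})$ to have rank one, i.e. $c_{ij}=a_ir_j$ for positive integers $a_i,r_j$; this is (\ref{quasigal}). Finally, if $P$ is Galois its type is $(d\cdot\mathbf{1},d\cdot\mathbf{1})$, which forces all the $a_i$ to be equal and all the $r_j$ to be equal, so $P\cong(L\otimes_{\mathcal{Z}}L)^{a_ir_j}$, as claimed.
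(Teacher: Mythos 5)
Your outline follows the paper's architecture (nonvanishing of all blocks, a common center field, containment of $L\otimes_{\mathcal Z}L$, then a projectivity-defect argument), but the two places where the real work happens are not actually proved. First, Step 1 is not a proof: transitivity of the relation ``$P_{ij}\ne 0$'' together with connectedness of $\Gamma(P)$ (which is connectedness of the underlying unoriented graph) does not force all blocks to be nonzero. A transitively closed digraph with edges $1\to 1$, $2\to 2$, $1\to 2$ but no edge $2\to 1$ is connected and transitively closed, so your argument cannot exclude an ``upper triangular'' $P$ with $P_{21}=0$. What excludes it is the numerical identity $[P]^2=d[P]$ for the matrix of left dimensions, i.e. the positivity/idempotency argument of Lemma \ref{fp} and Lemma \ref{tran} (already in the $2\times 2$ case one needs $x=z=d$ and then $2dy=dy$, forcing $y=0$). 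You never invoke this identity, and without Step 1 the injectivity of the projections ${\mathcal Z}\to L_i$ in your Step 2 is also unsupported.

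Second, Step 2 explicitly defers the central technical point: you assert that a central element of $P_{ii}$ must ``propagate'' to a central element of $P$, call it ``the main obstacle,'' and give no argument. This is precisely where the paper works: for $z\in{\mathcal Z}(P_{ii})$ the containment $P_{ij}\otimes_{L_j}P_{ji}\subset P_{ii}^d$ forces $\phi_{ji}(z)$ to be a scalar $\eta_{ji}(z)$, the containment $P_{jj}\otimes_{L_j}P_{ji}\subset P_{ji}^d$ forces $\eta_{ji}(z)\in{\mathcal Z}(P_{jj})$, and the relations $\eta_{ji}\circ\eta_{ij}={\rm Id}$ and $\eta_{\ell j}\circ\eta_{ji}=\eta_{\ell i}$ glue the local centers into a single field equal to ${\mathcal Z}(P)$; nothing in your text replaces this. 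Relatedly, in Step 3 the phrase ``blockwise rerun of Proposition \ref{conta2}'' is not available as stated: the blocks satisfy only $\oplus_k P_{ik}\otimes_{L_k}P_{kj}\cong P_{ij}^d$, the diagonal block $P_{ii}$ is merely weakly Galois, and Proposition \ref{conta2} needs the exact Galois identity for a single algebra. The paper's route is to extract the rank-one dimension data $[P]_{ij}=a_ib_j$ first (from the idempotency argument you skipped), use it to build the comparison bimodule $Q=\oplus_{i,j}(L_i\otimes_{\mathcal Z}L_j)^{Na_ib_j/d_j}$ with $Q\otimes_LQ=Q^{dN}$ and $Q\otimes_LP=P\otimes_LQ=Q^d$, and only then run the defect argument globally; your proposed order (freeness of each block first, rank one at the end) rests on an argument that has not been supplied.
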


\begin{proof}
Let $d$ be the rank of $P$. 
We will need the following well known lemma from linear algebra. 

\begin{lemma}\label{fp}
Let $0<p<m$ be positive integers, 
and $A\in {\rm Mat}_m(\Bbb R)$ be a 
block matrix
$$
A=\left(\begin{matrix}X& Y\\ 0&Z
\end{matrix}\right) 
$$
where $X$ is of size $p$ by $p$. 
Suppose that $A^2=A$, the entries 
of $X$ and $Z$ are strictly positive, and the entries of $Y$ are
nonnegative. Then $Y=0$.  
\end{lemma}

\begin{proof} 
For any $N\ge 2$ we have
$$
A=A^N=\left(\begin{matrix}X^N& \sum_i X^iYZ^{N-1-i}\\ 0&Z^N
\end{matrix}\right)=
$$
$$
\left(\begin{matrix}X& XY+YZ+(N-2)XYZ\\ 0&Z
\end{matrix}\right).  
$$
Thus, $XYZ=0$. Since $X,Z$ have positive entries and $Y$
nonnegative entries, we have $Y=0$, as desired.  
\end{proof} 

\begin{lemma}\label{tran}
One has $P_{ij}\ne 0$ for any $i,j$. 
\end{lemma}

\begin{proof} 
Let $[P]$ be the matrix of dimensions of $P_{ij}$ over $L_i$. 
Then $[P]^2=d[P]$.

Call two vertices of $\Gamma(P)$ 
equivalent if they can be reached from each other by moving
along the edges of $\Gamma(P)$ according to their orientation.
\footnote{Note that since $[P]^2=d[P]$, 
if $a$ vertex $v_1\in \Gamma(P)$ can be reached 
from $v_2\in \Gamma(P)$ by an oriented path then there is actually an edge from $v_2$ to $v_1$.} 
Let $C_1$ be an equivalence class from which one cannot reach any
other one (i.e., a sink), and assume for the sake of
contradiction that $C_2$ is another equivalence class from which 
there is an edge into $C_1$. Consider the principal 
submatrix $A$ of $[P]/d$ corresponding to $C_1$ and $C_2$. 
Then $A$ satisfies the conditions of Lemma \ref{fp}. 
Then the conclusion of the lemma gives a contradiction with 
the existence of an edge from $C_2$ to $C_1$. Thus, there is only
one equivalence class, which proves the
lemma (in view of the identity $[P]^2=d[P]$). 
\end{proof} 

Thus, we see that the matrix $[P]$ has rank $1$, i.e. 
$[P]_{ij}=a_ib_j$ for some positive rational numbers $a_i,b_j$. 
We may scale $a_i$ in such a way that they are positive integers
with ${\rm gcd}(a_1,...,a_n)=1$. Then $b_j$ are positive integers as well.   
Also, since $P$ is of rank $d$, we have $d=\sum_j a_jb_j$. 

Now consider the $L_i$-bimodule $P_{ii}$. 
It has left dimension $a_ib_i$, finite right dimension, and 
we have $P_{ii}\otimes_{L_i}P_{ii}\subset P_{ii}^d$.
Thus, by Proposition \ref{dimeq}, the right dimension of $P_{ii}$ is also $a_ib_i$, 
and $P_{ii}$ is a weakly Galois $L_i$-bimodule. 

Let ${\mathcal Z}_i$ be the center of $P_{ii}$. Then by 
Proposition \ref{conta3}, $[L_i:{\mathcal Z}_i]<\infty$.

Let $\phi_{ij}: L_j\to {\rm Mat}_{a_ib_j}(L_i)$ be the map defined
by the right action of $L_j$ on $P_{ij}$. 
Since $P_{ij}\otimes_{L_j}P_{ji}$ is contained in $P_{ii}^d$, 
we have 
\begin{equation}\label{centermaps}
(1\otimes \phi_{ij})(\phi_{ji}(z))={\rm Id}_{a_ib_j}\otimes {\rm
  Id}_{a_jb_i}\otimes z
\end{equation}
for any $z\in{\mathcal Z}_i$. This means that $\phi_{ji}(z)$ is a
scalar matrix, $\phi_{ji}(z)=\eta_{ji}(z){\rm Id}_{a_jb_i}$, where
$\eta_{ji}(z)\in L_j$. Moreover, since $P_{jj}\otimes_{L_j}P_{ji}$ 
is contained in $P_{ji}^d$, we have that 
$(1\otimes \phi_{jj})(\phi_{ji}(z))$ is
conjugate to restriction of ${\rm Id}_d\otimes \phi_{ji}(z)$
to an invariant subspace, which implies that $\eta_{ji}(z)\in {\mathcal Z}_j$. 
Thus, $\eta_{ji}: {\mathcal Z}_i\to {\mathcal Z}_j$, and 
by (\ref{centermaps}), $\eta_{ji}\circ \eta_{ij}={\rm Id}$, so
$\eta_{ij}$ are isomorphisms. Finally, since 
$P_{\ell j}\otimes_{L_j}P_{ji}$ is contained in $P_{\ell i}^d$, 
we have $\eta_{\ell j}\circ \eta_{ji}=\eta_{\ell i}$.

Thus, we have a single field ${\mathcal Z}$, 
and embeddings $\psi_i: {\mathcal Z}\to L_i$ such that ${\mathcal
  Z}_i=\psi_i({\mathcal Z})$ and $\eta_{ji}=\psi_j\circ
\psi_i^{-1}$. Moreover, ${\mathcal Z}={\mathcal Z}(P)$ 
(where ${\mathcal Z}$ is embedded into $L$ by the diagonal
embedding $\oplus_i \psi_i$). We will identify ${\mathcal Z}$ with ${\mathcal Z_i}$ using 
the maps $\psi_i$.  

Now we see that the dimension of $P_{ij}$ over ${\mathcal Z}$ 
(on either side) is $a_ib_jd_i$, which implies 
that the dimension of $P_{ij}$ as a right vector space over 
$L_j$ is $a_ib_jd_i/d_j$ (which is therefore an integer). 
Thus, $P$ is a quasi-Galois bimodule of
type $(\bold p,\bold q)$, where 
$$
p_i=a_i\sum_jb_j,\ q_j=\frac{b_j}{d_j}\sum_i a_id_i.
$$

Now, by Proposition \ref{conta3}, 
$P_{ii}$ contains $L_i\otimes_{\mathcal Z}L_i$. Since 
$P_{ii}\otimes_{L_i} P_{ij}$ is contained in  
$P_{ij}^d$, we get that $P_{ij}^d$ contains 
$L_i\otimes_{\mathcal Z}L_j$. Now, $L_i\otimes_{\mathcal Z}L_j$
is a Frobenius algebra, so it is an injective module over itself.
Thus, this inclusion splits and $L_i\otimes_{\mathcal Z}L_j$ 
is contained in a multiple of $P_{ij}$ as a direct summand. 
Since $L_i\otimes_{\mathcal Z}L_j$ is a commutative algebra, it
has multiplicity free decomposition into indecomposable 
projective modules, so is contained as a direct summand in
$P_{ij}$ itself. 

To conclude the proof, we use the argument 
from the proof of Proposition \ref{conta2}. 
Let $Q=\oplus_{i,j}(L_i\otimes_{\mathcal Z}L_j)^{Na_ib_j/d_j}$, 
where $N$ is the smallest positive integer such that 
the numbers $Nb_j/d_j$ are integers. 
It is easy to check that $Q$ is a quasi-Galois $L$-bimodule 
of rank $Nd$ and type $(N\bold p,N\bold q)$,
so that 
\begin{equation}\label{eq1}
Q\otimes_L Q=Q^{dN}
\end{equation}
and also 
\begin{equation}\label{eq2}
Q\otimes_L P=P\otimes_L Q=Q^d.
\end{equation}

Let $Q=\oplus_k Q_k^{s_k}$, $s_k>0$, be the decomposition of
$Q$ into indecomposable $L$-bimodules, let $t_k$ be the 
multiplicity of $Q_k$ in the decomposition of $P$, and let 
$t/s=t_\ell/s_\ell$ be the smallest of the rational numbers 
$t_k/s_k$. Since, as shown above, some multiple of $P$ contains $Q$ as
a direct summand, $t>0$. We have $P^s=Q^t\oplus M$, where $M$ does not contain
$Q_\ell$ as a direct summand. By (\ref{eq1}) and (\ref{eq2}), 
we have 
\begin{equation}\label{eq3}
Q\otimes_L M=M\otimes_L Q=Q^{d(s-tN)}.
\end{equation}
Thus, we get 
$$
Q^{tds}\oplus M^{ds}=P^{ds^2}=P^s\otimes_L P^s=
(Q^t\oplus M)\otimes_L (Q^t\oplus M)=Q^{t^2dN+2td(s-tN)}\oplus
M\otimes_L M.
$$
Since $M$ does not contain $Q_\ell$ as a direct summand, 
we get 
$$
tds\ge t^2dN+2td(s-tN). 
$$
Since $t>0$, this gives 
$$
s\ge tN+2(s-tN)=2s-tN.
$$
Thus, 
$$
s-tN\le 0.
$$ 
But we also know from (\ref{eq3}) that $s-tN\ge 0$. Thus $s-tN=0$, and hence
$M=0$. So $P^s=Q^t$,
i.e. $P=\oplus_{i,j}(L_i\otimes_{\mathcal Z} L_j)^{a_itb_jN/sd_j}$.
This means that $r_j:=\frac{tb_jN}{sd_j}$ are integers, and 
$P=\oplus_{i,j}(L_i\otimes_{\mathcal Z}L_j)^{a_ir_j}$, as claimed. 
The theorem is proved. 
\end{proof} 

\subsection{Galois bimodules over noncommutative semisimple algebras finite over center} 

Let $B$ be a semisimple algebra finite over its center, i.e. $B=\oplus_{i=1}^n B_i$,
where $B_i$ are central simple algebras over fields $L_i$ of the
same characteristic, of dimension $m_i^2$. Let $P$ be a Galois $B$-bimodule of rank $d$. 
As in the commutative case, it suffices to consider connected
bimodules, so we will assume that $P$ is connected. 

First consider the case when $B_i={\rm Mat}_{m_i}(L_i)$. 
In this case, similarly to the case of simple algebras, we have
the following proposition. Let $m_*$ be the greatest common
divisor of the $m_i$.  

\begin{proposition}\label{matr1} 
The center ${\mathcal Z}$ of $P$ is a field
such that $[L_i:{\mathcal Z}]<\infty$, and 
$P$ is a multiple of $\oplus_{i,j}{\rm Mat}_{m_i\times
  m_j}(L_i\otimes_{\mathcal Z}L_j)^{m_im_j/m_*^2}$. 
\end{proposition}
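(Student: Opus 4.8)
The plan is to reduce the statement to the commutative case, Theorem~\ref{commsem}, by a Morita equivalence. Since $B_i={\rm Mat}_{m_i}(L_i)$, choose in each block a rank-one idempotent $e_i\in B_i$ and set $\epsilon=\sum_{i=1}^n e_i$. Then $\epsilon$ is a full idempotent (one has $B_i e_i B_i=B_i$, hence $B\epsilon B=B$), $\epsilon B\epsilon=\oplus_i e_iB_ie_i=\oplus_i L_i=L$ is the center of $B$, and the Peirce-cutting functor $M\mapsto \bar M:=\epsilon M\epsilon$ is a monoidal equivalence from $B$-bimodules to $L$-bimodules: $\overline{M\otimes_B N}\cong \bar M\otimes_L\bar N$ (this is where fullness of $\epsilon$ is used) and $\bar B=L$. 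Moreover, any $({\rm Mat}_{m_i}(L_i),{\rm Mat}_{m_j}(L_j))$-bimodule $W$ satisfies $W\cong {\rm Mat}_{m_i\times m_j}(e_iWe_j)$, so on isomorphism classes the functor is a genuine bijection, inverse to matrix amplification.

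The first step is to check that $\bar P$ is a connected quasi-Galois $L$-bimodule of rank $d$. Applying $\epsilon(-)\epsilon$ to $P\otimes_B P\cong P^d$ and using monoidality gives $\bar P\otimes_L\bar P\cong \bar P^d$. Since $B$ is finite over its center $L$ and $P\cong B^d$ as a one-sided $B$-module, $P$ is finite over $L$ on each side, hence so is the subset $\bar P=\epsilon P\epsilon$; as $L$ is a product of fields, $\bar P$ is automatically free, i.e.\ isomorphic to $L^{\bold p}$ on the left and $L^{\bold q}$ on the right, so it is quasi-Galois of rank $d$. Writing $P=\oplus_{i,j}P_{ij}$ with $P_{ij}$ a $(B_i,B_j)$-bimodule, one has $P_{ij}\cong {\rm Mat}_{m_i\times m_j}(\bar P_{ij})$, so $P_{ij}\ne 0$ iff $\bar P_{ij}\ne 0$, and thus $\bar P$ is connected because $P$ is. I would also note that $\mathcal Z(P)=\mathcal Z(\bar P)$: both lie inside $Z(B)=Z(L)=L$, and for $z=(z_i)\in L$ the requirement that $z_i$ act equally on the two sides of each $P_{ij}={\rm Mat}_{m_i\times m_j}(\bar P_{ij})$ is precisely the requirement that $z$ be central for $\bar P$.

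Now Theorem~\ref{commsem} applies to $\bar P$: the center $\mathcal Z:=\mathcal Z(\bar P)=\mathcal Z(P)$ is a field embedding into each $L_i$ with $[L_i:\mathcal Z]=d_i<\infty$, and $\bar P\cong \oplus_{i,j}(L_i\otimes_{\mathcal Z}L_j)^{a_ir_j}$ for some positive integers $a_i,r_j$; amplifying, $P\cong \oplus_{i,j}{\rm Mat}_{m_i\times m_j}(L_i\otimes_{\mathcal Z}L_j)^{a_ir_j}$. It remains to use that $P$ is Galois, not merely quasi-Galois, over $B$. Restricting $P\cong B^d$ to $B_i$ and comparing multiplicities of the simple left $B_i$-module ($\cong L_i^{m_i}$) gives $a_i\sum_j m_jr_jd_j=m_id$, so $a_i/m_i$ is independent of $i$; symmetrically $r_j/m_j$ is independent of $j$. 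Hence $a_ir_j=\kappa\, m_im_j$ for a fixed positive rational $\kappa$; writing $a_i/m_i=p/q$ in lowest terms forces $q\mid m_i$ for all $i$, hence $q\mid m_*$, and likewise for the $r_j$, so $\kappa m_*^2\in\Bbb Z_{>0}$. Since also $m_im_j/m_*^2\in\Bbb Z_{>0}$, we get $P\cong\big(\oplus_{i,j}{\rm Mat}_{m_i\times m_j}(L_i\otimes_{\mathcal Z}L_j)^{m_im_j/m_*^2}\big)^{\kappa m_*^2}$, which is the claim.

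The main obstacle I anticipate is the first step: verifying cleanly that $\bar P$ is quasi-Galois. This needs the monoidality of the Peirce-cutting functor (resting on fullness of $\epsilon$) together with the observation that finiteness of $P$ over its center is inherited by $\bar P$, which is what makes $\bar P$ free on both sides at no cost. The conceptual subtlety is that ``Galois over $B$'' translates not to ``Galois over $L$'' but only to ``quasi-Galois over $L$'' — the unequal matrix sizes $m_i$ distort the one-sided ranks — which is exactly why Theorem~\ref{commsem} is needed in its full quasi-Galois generality rather than just its Galois specialization; the subsequent arithmetic with the $m_i$ then recovers the balanced answer with the correction factor $m_im_j/m_*^2$.
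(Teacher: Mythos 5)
Your proof is correct and follows essentially the same route as the paper: pass through the (standard) Morita equivalence $B\text{-bimod}\simeq L\text{-bimod}$ — which you realize concretely by Peirce cutting with a full idempotent — observe that the image of $P$ is a connected quasi-Galois $L$-bimodule of rank $d$ with the same center, invoke Theorem \ref{commsem}, and then pin down the multiplicities $a_ir_j$ by comparing one-sided multiplicities against $P\cong B^d$. The only differences are presentational: the paper normalizes the $a_i$ to be coprime and solves $a_i=m_i/m_*$, $r_j=rm_j/m_*$ directly, whereas you keep $a_i,r_j$ unnormalized and show $a_ir_j=\kappa m_im_j$ with $\kappa m_*^2\in\Bbb Z_{>0}$, which amounts to the same thing.
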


\begin{proof}
Let $L=\oplus_{i=1}^n L_i$ be the center of $B$. 
Let $F: B-{\rm Bimod}\to L-{\rm bimod}$ 
be the standard Morita equivalence. 
It is easy to see that $F(P)$ 
is a quasi-Galois $L$-bimodule 
of rank $d$. Thus, by Theorem \ref{commsem},
$F(P)=\oplus_{i,j}(L_i\otimes_{\mathcal Z}L_j)^{a_ir_j}$, where $a_i$ are coprime integers.
By looking at the left dimensions of $P_{ij}$ over $L_i$, we get
$$
\sum_j m_im_ja_ir_jd_j=dm_i^2.
$$
where $d_j=[L_j:{\mathcal Z}]$. 
This implies that $a_i=m_i/m_*$, and 
$dm_*=\sum_j m_jr_jd_j$. 
Also, by looking at right dimensions we have 
$$
\sum_i m_im_ja_ir_jd_i=dm_j^2,
$$
which implies that 
$r_j=rm_j/m_*$, where 
$$
r=\frac{dm_*}{\sum_i d_im_ia_i}=\frac{dm_*^2}{\sum_i d_im_i^2}.
$$ 
is an integer. 
This implies the statement by applying $F^{-1}$.   
\end{proof}

Now consider the general case. 

\begin{proposition}\label{censim2} 
The center ${\mathcal Z}$ of $P$ is a field such that
$[L_i:{\mathcal Z}]<\infty$, and 
$P^{m_*^2}$ is a multiple of $B\otimes_{\mathcal Z}B$. 
\end{proposition}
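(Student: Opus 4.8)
The plan is to follow the strategy of Corollary \ref{censim1}: first identify the center, then pass to a splitting field to reduce to the split matrix‑algebra case of Proposition \ref{matr1}, and finally descend.

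First I would show that $\mathcal Z:=\mathcal Z(P)$ is a field with $[L_i:\mathcal Z]<\infty$. Since $P$ is connected, an incidence‑graph argument as in Lemma \ref{tran} (using $P\otimes_B P\cong P^d$) shows every $P_{ij}\ne 0$; in particular each diagonal block $P_{ii}$ is nonzero, and $P_{ii}\otimes_{B_i}P_{ii}$ is a direct summand of $(P\otimes_B P)_{ii}\cong P_{ii}^d$, so $P_{ii}$ is a weakly Galois $B_i$‑bimodule (equality of left and right $B_i$‑ranks following from Proposition \ref{dimeq} applied to $P_{ii}|_{L_i}$). By Lemma \ref{ext}(iii), $P_{ii}|_{L_i}$ is weakly Galois over $L_i$, so Proposition \ref{conta3} gives $[L_i:\mathcal Z_i]<\infty$ for the subfield $\mathcal Z_i:=\mathcal Z(P_{ii})$ of $L_i$. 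Then, exactly as in the proof of Theorem \ref{commsem}, the right action of $\mathcal Z_j$ on each off‑diagonal block $P_{ij}$ is scalar and identifies $\mathcal Z_j$ with $\mathcal Z_i$ through maps $\eta_{ij}$ with $\eta_{ij}\eta_{jk}=\eta_{ik}$, so the $\mathcal Z_i$ glue to a single field $\mathcal Z$ embedded diagonally in $L:=\oplus_i L_i$; one checks $\mathcal Z=\mathcal Z(P)$ and $d_i:=[L_i:\mathcal Z]<\infty$.

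Next, inside a fixed algebraic closure of $\mathcal Z$, I would choose a finite normal extension $K/\mathcal Z$ containing a copy over $\mathcal Z$ of each $L_i$, splitting each $B_i$, and containing all eigenvalues of the relevant right‑multiplication operators on $P$ (obtained by applying Theorem \ref{spli2}(i) to the diagonal blocks $P_{ii}|_{L_i}$ and passing to a normal closure of the compositum). Using the chosen embeddings, set $B_K:=\oplus_i(K\otimes_{L_i}B_i)\cong\oplus_i\mathrm{Mat}_{m_i}(K)$ and $P_K:=B_K\otimes_B P\otimes_B B_K$; block by block, Lemma \ref{ext}(i) makes $P_K$ a Galois $B_K$‑bimodule, and by construction together with Theorem \ref{spli2} and Proposition \ref{p1a} it is \emph{split}. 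Since the gcd of the $m_i$ is unchanged, Proposition \ref{matr1} then presents $P_K$ as a multiple of $\oplus_{i,j}\mathrm{Mat}_{m_i\times m_j}(K\otimes_{\mathcal Z}K)^{m_im_j/m_*^2}$. Restricting $P_K$ and $B_K\otimes_{\mathcal Z}B_K$ back to $B$, comparing left and right $\mathcal Z$‑dimensions of all blocks, and cancelling common multiplicities — just as $P^{m^2}=(B\otimes_{\mathcal Z(P)}B)^r$ is obtained at the end of the proof of Corollary \ref{censim1} — should yield that $P^{m_*^2}$ is a multiple of $B\otimes_{\mathcal Z}B$.

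The hard part will be this final descent. Unlike the single‑block setting, the inflation factor $[K:L_i]=[K:\mathcal Z]/d_i$ depends on the block, so $B_K$ is \emph{not} a constant multiple of $B$ as a $B$‑bimodule; the power $m_*^2$ on the left (so that only $P^{m_*^2}$, not $P$, is controlled) and the block‑dependent exponents $m_im_j/m_*^2$ of Proposition \ref{matr1} are exactly what must conspire to make the per‑block multiplicities line up after restriction, and arranging this bookkeeping correctly is the delicate point. A secondary issue is inseparability: when some $L_i/\mathcal Z$ is inseparable, $L_i\otimes_{\mathcal Z}L_j$ is non‑reduced, so both in the gluing above and when splitting off copies of $L_i\otimes_{\mathcal Z}L_j$ from the blocks one should rely on Theorem \ref{insep} and Proposition \ref{conta3} rather than on separability.
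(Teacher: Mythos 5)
There is a genuine gap, and it is exactly at the point you flag but do not resolve. Your splitting step uses a single field $K$ normal over $\mathcal Z$ containing all the $L_i$, and then asserts that $P_K:=B_K\otimes_B P\otimes_B B_K$ is a Galois $B_K$-bimodule ``block by block'' via Lemma \ref{ext}(i). But Lemma \ref{ext}(i) requires the extension to be a \emph{constant} multiple of the base as a bimodule, and $B_K=\oplus_i K\otimes_{L_i}B_i$ is isomorphic as a $B$-bimodule to $\oplus_i B_i^{[K:L_i]}$ with $[K:L_i]=[K:\mathcal Z]/d_i$ varying with $i$. With uneven blow-up factors the left and right ranks of the blocks of $P_K$ over $B_K$ no longer match a single rank, and $P_K\otimes_{B_K}P_K$ is no longer a constant multiple of $P_K$ (the middle factors $K\otimes_{L_\ell}K$ contribute block-dependent multiplicities), so $P_K$ is not Galois and Proposition \ref{matr1} simply cannot be invoked. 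Consequently the final ``descent and cancellation'' is not a bookkeeping issue: the input it needs (the split classification of $P_K$) is not available on your route, and the paper has no quasi-Galois classification over noncommutative semisimple algebras that could substitute for it.

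The paper's proof fixes precisely this by \emph{equalizing degrees with non-field splitting algebras}: choose finite-dimensional commutative \emph{separable algebras} $E_i\supset L_i$ (direct sums of splitting fields, not necessarily fields) which split $B_i$ and satisfy $[E_i:L_i]=D$ for the same $D$ for all $i$. Then $E=\oplus_i E_i\cong L^D$ as an $L$-module, so $E\otimes_L B\cong B^D$ as a $B$-bimodule, Lemma \ref{ext}(i) applies, and $E\otimes_L P\otimes_L E$ is a genuine Galois bimodule over $\oplus_i{\rm Mat}_{m_i}(E_i)$, to which Proposition \ref{matr1} applies. Raising to the power $m_*^2$ identifies $(E\otimes_L P\otimes_L E)^{m_*^2}$ with a multiple of $(E\otimes_L B)\otimes_{\mathcal Z}(E\otimes_L B)$, and restricting to $B$ gives $P^{D^2m_*^2}$ as a multiple of $(B\otimes_{\mathcal Z}B)^{D^2}$, whence the claim by Krull--Schmidt cancellation over the finite-dimensional $\mathcal Z$-algebra $B\otimes_{\mathcal Z}B^{\rm op}$. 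Note also that splitting is done over $L$, not over $\mathcal Z$, so your worry about inseparability of $L_i/\mathcal Z$ does not arise, and your preliminary argument that $\mathcal Z$ is a field with $[L_i:\mathcal Z]<\infty$, while correct in outline (it mirrors Theorem \ref{commsem}), is largely subsumed by Proposition \ref{matr1} once the correct splitting is in place.
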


\begin{proof} 
Let $E_i\supset L_i$ be finite
dimensional commutative separable algebras (not necessarily field extensions) 
which split the central simple algebras $B_i$, i.e. 
$E_i\otimes_{L_i}B_i={\rm Mat}_{m_i}(E_i)$. We can 
pick $E_i$ in such a way that $[E_i:L_i]=D$, the same number for
all $i$. Let $E=\oplus_i E_i$. Then $E=L^D$ as an $L$-module, 
so by Lemma \ref{ext}(i), $E\otimes_L P\otimes_L E$
is a Galois bimodule over $E\otimes_L B=\oplus_i {\rm
  Mat}_{m_i}(E_i)$. By Proposition \ref{matr1}, 
$E\otimes_L P\otimes_L E$ is a multiple of 
$\oplus_{i,j}{\rm Mat}_{m_i\times
  m_j}(E_i\otimes_{\mathcal Z}E_j)^{m_im_j/m_*^2}$, where 
${\mathcal Z}$ is the center of $P$. 
This implies that the $E$-bimodule $(E\otimes_L P\otimes_L E)^{m_*^2}$
is a multiple of 
$$
\oplus_{i,j}{\rm Mat}_{m_i\times m_j}(E_i\otimes_{\mathcal
  Z}E_j)^{m_im_j}=\oplus_{i,j}{\rm
  Mat}_{m_i}(E_i)\otimes_{\mathcal Z}{\rm Mat}_{m_j}(E_j)=(E\otimes_L B)\otimes_{\mathcal Z}(E\otimes_L B). 
$$
So restricting back to $B$,
we get that $P^{D^2m_*^2}$ is a multiple of 
$(B\otimes_{\mathcal Z}B)^{D^2}$, which shows that 
$P^{m_*^2}$ is a multiple of $B\otimes_{\mathcal Z}B$, as desired. 
\end{proof} 

\begin{corollary}\label{charpoly} 
Let $P$ be a Galois bimodule over
$B$. Let $m$ be the least common multiple of the $m_i$. 
Given $a\in (a_1,...,a_n)\in L\subset B$,
denote by $\phi_i(a)$ the $i$-th component 
of $\phi(a)=\phi_P(a)$, $\phi_i(a)\in {\rm Mat}_{dm_i^2}(L_i)$. 
Let $\chi_a\in L[t]$ be the collection of monic polynomials 
$\chi_{\phi_i(a)}(t)^{m^2/m_i^2}\in L_i[t]$ of degree $dm^2$, 
where $\chi_{\phi_i(a)}$ is the characteristic polynomial 
of $\phi_i(a)$. Then the coefficients of $\chi_a$ 
belong to the center ${\mathcal Z}={\mathcal Z}(P)\subset L$. 
\end{corollary}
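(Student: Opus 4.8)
The plan is to reduce to the commutative (matrix-algebra) case via the same separable-splitting trick used in the proof of Proposition \ref{censim2}, and then to invoke the classification result Proposition \ref{matr1} together with the characteristic-polynomial computation for $L\otimes_{\mathcal Z}L$ that already appeared in the proof of Corollary \ref{chara1}. First I would pick, exactly as in Proposition \ref{censim2}, finite-dimensional commutative separable algebras $E_i\supset L_i$ with $E_i\otimes_{L_i}B_i={\rm Mat}_{m_i}(E_i)$ and with $[E_i:L_i]=D$ independent of $i$, set $E=\oplus_i E_i$, and form the Galois bimodule $\widetilde P:=E\otimes_L P\otimes_L E$ over $\widetilde B:=E\otimes_L B=\oplus_i {\rm Mat}_{m_i}(E_i)$. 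The crucial observation is that passing from $P$ to $\widetilde P$ does not change the minimal or characteristic polynomials in the relevant sense: for $a\in L\subset B$ (viewed in $E\subset\widetilde B$), the operator $\phi_{\widetilde P}(a)$ is $\phi_P(a)$ extended along the separable extension $E/L$, so its characteristic polynomial on the $i$-th block is $\chi_{\phi_i(a)}(t)^{[E_i:L_i]}=\chi_{\phi_i(a)}(t)^{D}$, and the coefficients of $\chi_a$ lie in ${\mathcal Z}(P)$ if and only if the analogous coefficients for $\widetilde P$ lie in ${\mathcal Z}(\widetilde P)={\mathcal Z}(P)$ — the last equality because $E$ is separable (indeed Lemma \ref{ext}(i) and Theorem \ref{spli2}(v)-type arguments, or simply the fact that the center is detected after a faithfully flat base change).

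Having reduced to $\widetilde P$, I would apply the Morita equivalence $F:\widetilde B\text{-Bimod}\to E\text{-bimod}$ (as in Proposition \ref{matr1}) to get the quasi-Galois $E$-bimodule $F(\widetilde P)$, and then Theorem \ref{commsem} gives $F(\widetilde P)=\oplus_{i,j}(E_i\otimes_{\mathcal Z}E_j)^{a_ir_j}$ with ${\mathcal Z}={\mathcal Z}(P)$ a field and $[E_i:{\mathcal Z}]<\infty$. Now the action of $a\in L$ on $\widetilde P$ decomposes block-diagonally, and on the $i$-th diagonal block the operator $\phi_{\widetilde P,ii}(a)$ is (a multiple of) the operator of multiplication by $\psi_i(a)\in E_i$ on $\bigoplus_j (E_i\otimes_{\mathcal Z}E_j)^{a_ir_j}$, i.e. on a sum of copies of $E_i$ regarded as a ${\mathcal Z}$-vector space via $E_i\otimes_{\mathcal Z}E_j\cong E_i^{[E_j:{\mathcal Z}]}$. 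Exactly as in the proof of Corollary \ref{chara1}, the characteristic polynomial of multiplication by $\psi_i(a)$ on $E_i$ over ${\mathcal Z}$ is a power of the minimal polynomial of $\psi_i(a)$ over ${\mathcal Z}$, and in particular has coefficients in ${\mathcal Z}$; taking the appropriate power (dictated by the multiplicities $a_i,r_j$ and the Morita factor $m_i^2$, and by the normalizing exponent $m^2/m_i^2$ in the statement so that all blocks have the same degree $dm^2$) keeps the coefficients in ${\mathcal Z}$. This shows the coefficients of $\chi_a$, computed for $\widetilde P$, lie in ${\mathcal Z}$, and hence the same holds for $P$.

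The one bookkeeping point that needs care — and which I expect to be the main obstacle — is matching the exponents: one must check that the normalization $\chi_{\phi_i(a)}^{m^2/m_i^2}$ in the statement is precisely what is needed for the $n$ block polynomials to assemble into a single element $\chi_a\in L[t]$ of uniform degree $dm^2$ whose coefficients are consistent under the diagonal embedding ${\mathcal Z}\hookrightarrow L=\oplus_i L_i$, i.e. that the "$i$-th component" of each coefficient, expressed in ${\mathcal Z}$ via $\psi_i$, is independent of $i$. This follows because $\chi_{\phi_i(a)}$ is itself a power of the minimal polynomial $\mu_a^{(i)}$ of $\psi_i(a)$ over ${\mathcal Z}$ (by the $L\otimes_{\mathcal Z}L$ computation above), and the minimal polynomial of $a$ over ${\mathcal Z}$ is the same for all $i$ (the $\psi_i$ are ${\mathcal Z}$-algebra embeddings of the same field ${\mathcal Z}(a)\subset {\mathcal Z}$-algebra generated by $a$), so all the $\chi_{\phi_i(a)}^{m^2/m_i^2}$ are equal to the common power $(\mu_a)^{N}$ with $N=dm^2/\deg\mu_a$; hence the coefficients genuinely lie in ${\mathcal Z}$. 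With this the corollary follows, and it is exactly the input needed (via Lemma \ref{inclo} and the argument of Theorem \ref{maint}) for the integrality statement of Section 5.
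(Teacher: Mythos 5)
There is a genuine gap, and it sits exactly at the point you yourself flag as the delicate one. Your final step asserts that each block characteristic polynomial $\chi_{\phi_i(a)}$ is a power of a single minimal polynomial $\mu_a$ of ``$a$'' over ${\mathcal Z}$, the same for all $i$, because ``the $\psi_i$ are ${\mathcal Z}$-algebra embeddings of the same field ${\mathcal Z}(a)$''. This mis-models the operator $\phi_i(a)$: the $i$-th left component of $P$ is the whole row $\oplus_j P_{ij}$, and the right action of $a=(a_1,\dots,a_n)$ on the summand $P_{ij}$ is right multiplication by $a_j$, not by $a_i$ (nor by any ``$\psi_i(a)$'' -- the components $a_1,\dots,a_n$ are unrelated elements of different fields and do not come from a common subfield). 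Hence $\chi_{\phi_i(a)}$ is a product over $j$ of polynomials attached to the different $a_j$, not a power of one minimal polynomial. The correct mechanism, which is what the paper uses, comes straight from Proposition \ref{censim2}: writing $P^{m_*^2}\cong (B\otimes_{\mathcal Z}B)^r$ and computing the right action of $a$ on $B_i\otimes_{\mathcal Z}B_j$ gives $\chi_{\phi_i(a)}^{m_*^2}=\bigl(\prod_{j}\chi_{a_j}^{m_j^2}\bigr)^{r m_i^2}$, where $\chi_{a_j}$ is the characteristic polynomial of multiplication by $a_j$ on $L_j$ over ${\mathcal Z}$; by unique factorization of monic polynomials in $L_i[t]$ this yields $\chi_{\phi_i(a)}^{m^2/m_i^2}=\bigl(\prod_j \chi_{a_j}^{m_j^2}\bigr)^{r(m/m_*)^2}$, which is visibly independent of $i$ and has coefficients in ${\mathcal Z}$. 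Your ``common power $(\mu_a)^N$'' identity is false as soon as $n>1$, i.e.\ in exactly the multi-component situation this corollary is needed for in Section 5; it only holds in the case $n=1$, which is already Corollary \ref{chara1}.

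Two secondary points. First, the detour through the separable splitting algebras $E_i$, the Morita equivalence and Theorem \ref{commsem} is not needed (Proposition \ref{censim2} can be quoted as a black box and the computation done directly over $B$), and it creates an extra difficulty you gloss over: what you can read off upstairs are the coefficients of the characteristic polynomial of the block of $\widetilde P$, which is $\chi_{\phi_i(a)}^{D}$, and in characteristic $p$ with $p\mid D$ a monic polynomial whose $D$-th power has coefficients in ${\mathcal Z}$ need not itself (or the specific power $m^2/m_i^2$ you need) have coefficients there; so the descent from $\widetilde P$ back to $P$ requires the same exact-power bookkeeping via unique factorization in any case. Second, the claim ${\mathcal Z}(\widetilde P)={\mathcal Z}(P)$ ``because the center is detected after a faithfully flat base change'' is not a proof -- the center of a bimodule is computed inside the center of the base algebra, which changes from $L$ to $E$ under your base change -- although only ${\mathcal Z}(\widetilde P)\cap L={\mathcal Z}(P)$ would actually be relevant, and the issue disappears entirely once one argues over $B$ itself.
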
  

\begin{proof}
It suffices to assume that $P$ is connected, so we restrict ourselves 
to this case. Let $\chi_{a_j}$ be the characteristic polynomial of $a_i\in L_i$
acting by multiplication on $L_i$ as a vector space over
${\mathcal Z}$. Clearly, this is a polynomial over ${\mathcal
  Z}$. By Proposition \ref{censim2}, we have 
$$
\chi_{\phi_i(a)}^{m_*^2}=\prod_{j=1}^n\chi_{a_j}^{m_i^2m_j^2}=f^{m_i^2},
$$
where $f=\prod_{j=1}^n\chi_{a_j}^{m_j^2}$. 
Thus, 
$$
\chi_{\phi_i(a)}^{m^2/m_i^2}=f^{(m/m_*)^2}.
$$
This is a polynomial with coefficients in ${\mathcal Z}$, and it
is independent on $i$, which implies that $\chi_a$ has
coefficients in ${\mathcal Z}$, as desired. 
\end{proof}

\begin{corollary}\label{divisi1}
If $P$ is a connected Galois $B$-bimodule of rank $d$, and
$d_i=[L_i:{\mathcal Z}]$, then $\sum_{i=1}^nd_i(m_i/m_*)^2$
divides $d$. 
\end{corollary}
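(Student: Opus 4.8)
The plan is to read off the divisibility from the isomorphism supplied by Proposition \ref{censim2}, by a dimension count over the center. Since $P$ is connected, that proposition tells us that ${\mathcal Z}:={\mathcal Z}(P)$ is a field with $d_i=[L_i:{\mathcal Z}]<\infty$ for all $i$, and that $P^{m_*^2}$ is a multiple of $B\otimes_{\mathcal Z}B$; because $P\ne 0$, we may write $P^{m_*^2}\cong (B\otimes_{\mathcal Z}B)^s$ as a $B$-bimodule for some positive integer $s$. Note that ${\mathcal Z}$ sits inside $B$ diagonally and that $\dim_{\mathcal Z}B=\sum_{i=1}^n\dim_{\mathcal Z}B_i=\sum_{i=1}^n m_i^2d_i$ is finite.

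Next I would simply compute $\dim_{\mathcal Z}$ of both sides of $P^{m_*^2}\cong (B\otimes_{\mathcal Z}B)^s$. On the one hand, since $P$ is a Galois $B$-bimodule of rank $d$ we have $P\cong B^d$ as a left $B$-module, hence as ${\mathcal Z}$-vector spaces $\dim_{\mathcal Z}P=d\dim_{\mathcal Z}B$, so $\dim_{\mathcal Z}(P^{m_*^2})=m_*^2 d\dim_{\mathcal Z}B$. On the other hand $\dim_{\mathcal Z}(B\otimes_{\mathcal Z}B)=(\dim_{\mathcal Z}B)^2$, so $\dim_{\mathcal Z}\big((B\otimes_{\mathcal Z}B)^s\big)=s(\dim_{\mathcal Z}B)^2$. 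Equating and cancelling the nonzero factor $\dim_{\mathcal Z}B$ gives $m_*^2 d=s\dim_{\mathcal Z}B=s\sum_{i=1}^n m_i^2 d_i=s\,m_*^2\sum_{i=1}^n d_i(m_i/m_*)^2$ (using that $m_*\mid m_i$, so each $(m_i/m_*)^2$ is an integer). Dividing by $m_*^2$ yields $d=s\sum_{i=1}^n d_i(m_i/m_*)^2$, and since $s$ is a positive integer this shows $\sum_{i=1}^n d_i(m_i/m_*)^2$ divides $d$, as claimed.

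I do not expect any real obstacle: all the structural content has already been established in Proposition \ref{censim2} (which itself rests on the splitting reduction to matrix algebras and on Theorem \ref{commsem}), and what remains is only the elementary bookkeeping above. One could equally well run the count with left $B$-module ranks rather than ${\mathcal Z}$-dimensions, decomposing $B\otimes_{\mathcal Z}B\cong B^{N}$ with $N=\sum_j m_j^2 d_j$; and in the purely matrix case $B_i={\rm Mat}_{m_i}(L_i)$ the assertion is already visible inside the proof of Proposition \ref{matr1}, where the integer $r=\frac{d m_*^2}{\sum_i d_i m_i^2}=\frac{d}{\sum_i d_i(m_i/m_*)^2}$ occurs explicitly. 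The only mild point of care is keeping left and right module structures straight, which the ${\mathcal Z}$-dimension count sidesteps entirely.
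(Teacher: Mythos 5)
Your proof is correct and is essentially the paper's own argument: the paper likewise invokes Proposition \ref{censim2} to write $P^{m_*^2}\cong(B\otimes_{\mathcal Z}B)^r$ and then counts left $B$-module ranks to get $dm_*^2=r\sum_i d_im_i^2$, which is exactly the identity you obtain (your ${\mathcal Z}$-dimension count differs only by the harmless factor $\dim_{\mathcal Z}B$). Nothing further is needed.
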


\begin{proof} 
By Proposition \ref{censim2}, $P^{m_*^2}=(B\otimes_{\mathcal
  Z}B)^r$, so computing ranks over $B$ as left modules, 
we get $dm_*^2=r\sum d_im_i^2$, which implies the statement. 
\end{proof}

\section{Generalization of Theorem \ref{maint}}

In this section we use the results of the
previous section to generalize Theorem \ref{maint} 
to the situation when $Z$ is not necessarily a domain, 
but a reduced algebra, i.e., one without nonzero nilpotents.
Let $Q$ be the total quotient ring of $Z$.   

Let $A$ be an algebra over $Z$ with a coaction of a finite
dimensional Hopf algebra $K$ over $k$ of dimension $d$. 
As in Section 3, consider assumptions (1)-(5) on 
$A,Z$, generalizing assumption (2) as follows: 

(2) $Q\otimes_Z A$ is a semisimple algebra with center $Q$. 

Note that (2) implies that $Q=Q_1\oplus...\oplus
Q_n$, where $Q_i$ are fields containing $k$. 
This property of $Z$ is satisfied, for instance, 
if $Z$ is Noetherian (in particular, affine).

\begin{theorem}\label{maint1}
Theorem \ref{maint} holds in this more general situation. 
In other words, under the conditions (1)-(4) or (1),(2),(5), 
$A$ and $Z$ are integral over $Z^K$, and if $Z$ is finitely generated over $k$, then so is $Z^K$, 
and $Z$ is a finite module over $Z^K$.  
\end{theorem}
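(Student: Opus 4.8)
The plan is to follow the proof of Theorem \ref{maint} almost verbatim, substituting the theory of Galois bimodules over semisimple algebras finite over the center (Section 4) for the theory over central simple algebras used there. First I form the Galois $A$-bimodule $P:=A\otimes K$ of rank $d=\dim K$, whose center is ${\mathcal Z}(P)={\mathcal Z}(A)^K$ by Proposition \ref{coa}. Under either (3) or (5) the module $A$, and hence $P\cong A^d$, is torsion-free over $Z$ in the sense that no non-zero-divisor of $Z$ kills a nonzero element; so $P$ embeds into $P_{\rm loc}:=Q\otimes_Z P$, and, exactly as in the proof of Theorem \ref{maint}, right multiplication by a non-zero-divisor of $Z$ on the torsion-free module $P_{\rm loc}$ is invertible. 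Thus the right $Z$-action on $P_{\rm loc}$ extends to $Q$, making $P_{\rm loc}$ a bimodule over $B:=Q\otimes_Z A$. By (2), $B=\oplus_{i=1}^n B_i$ with $B_i:=Q_i\otimes_Z A$ central simple over $Q_i$ of some dimension $m_i^2$, so $B$ is a semisimple algebra finite over its center $Q=\oplus_i Q_i$, which is the setting of Section 4; and since $Q$ is flat over $Z$ one gets $P_{\rm loc}\otimes_B P_{\rm loc}=Q\otimes_Z(P\otimes_A P)=Q\otimes_Z P^d=P_{\rm loc}^d$, with $P_{\rm loc}\cong B^d$ on each side. Hence $P_{\rm loc}$ is a Galois $B$-bimodule of rank $d$.

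Next, decomposing $P_{\rm loc}$ into its connected components over $B$ and applying Corollary \ref{charpoly} (which in turn rests on Proposition \ref{censim2}), I obtain, for each $z\in Z$ regarded inside $Q={\mathcal Z}(B)$ via the diagonal embedding, the polynomial $\chi_z$ of that corollary: writing $\phi_i(z)\in{\rm Mat}_{dm_i^2}(Q_i)$ for the $i$-th component of right multiplication by $z$ on $P_{\rm loc}$ and $m={\rm lcm}(m_i)$, the collection $\chi_z=(\chi_{\phi_i(z)}^{m^2/m_i^2})_i$ is monic of degree $dm^2$ with coefficients in ${\mathcal Z}(P_{\rm loc})$. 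By Cayley--Hamilton $\chi_z$ annihilates right multiplication by $z$ on $P_{\rm loc}$, and since $P_{\rm loc}$ is free over $B$ on the right and the coefficients of $\chi_z$ are central, this forces $\chi_z(z)=0$ in $B$, hence in $Q$. Moreover ${\mathcal Z}(P_{\rm loc})\cap Z=Z^K$: if $z\in Z$ lies in ${\mathcal Z}(P_{\rm loc})$ then it acts centrally on the sub-$Z$-bimodule $P\subset P_{\rm loc}$, so $z\in{\mathcal Z}(P)\cap Z={\mathcal Z}(A)^K\cap Z=Z\cap A^K=Z^K$, and the reverse inclusion is clear. So it remains to prove that the coefficients of $\chi_z$ lie in $Z$; then $\chi_z\in Z^K[t]$ is a monic polynomial annihilating $z$, whence $Z$, and hence $A$ (which is finite over $Z$), is integral over $Z^K$.

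For the descent of the coefficients of $\chi_z$ to $Z$ there are two cases. Under (1),(2),(5), $P$ is finitely generated projective, hence locally free, over $Z$; now $\Spec Z$ has exactly the $n$ minimal primes ${\mathfrak p}_i=\ker(Z\to Q_i)$ and finitely many clopen connected components. On such a component $W$, with corresponding idempotent $e_W\in Z$, the rank of $P$ over $Z$ is constant; since that rank equals $dm_i^2$ at each ${\mathfrak p}_i\in W$, the integer $m_i$ is constant, say equal to $\mu_W$, on $W$, and right multiplication by $z$ on $e_WP$ has a genuine characteristic polynomial $g_W\in(e_WZ)[t]$ of degree $d\mu_W^2$ localizing to $\chi_{\phi_i(z)}$ at every ${\mathfrak p}_i\in W$. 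Hence $\chi_z=\sum_W g_W^{m^2/\mu_W^2}\in Z[t]$. Under (1)--(4), the roots of $\chi_z$ are precisely the eigenvalues of right multiplication by $z$ on $Q\otimes_Z P$, and these are integral over $Z$ because $P$ is a finitely generated $Z$-module (apply Cayley--Hamilton to a matrix of right multiplication by $z$ written in $Z$-module generators of $P$); so the coefficients of $\chi_z$, being polynomials in its roots, are integral over $Z$, and lying in $Q$ with $Z$ integrally closed in $Q$, they lie in $Z$. (This is Lemma \ref{inclo} extended to total quotient rings.) In both cases $\chi_z\in Z^K[t]$, which gives the integrality statement; parts (iii) and (iv) then follow word for word as in Theorem \ref{maint}, using that an integral extension of $k$-algebras whose top ring is finitely generated is module-finite, and the Artin--Tate lemma.

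The hard part is this last step: pushing the coefficients of the characteristic polynomial down from $Q$ to $Z$ when $Z$ is not a domain. The obstruction is that $P$ need not have constant rank over $Z$, so there is no single characteristic polynomial of right multiplication by $z$ of the uniform degree $dm^2$ required to produce a monic relation; the remedy, already built into Corollary \ref{charpoly}, is to correct by the powers $m^2/m_i^2$ on each minimal prime of $Z$ (equivalently, on each connected component of $\Spec Z$) and then verify that the resulting polynomial still descends --- via local freeness when (5) holds, and via integrality over the integrally closed ring $Z$ when (4) holds.
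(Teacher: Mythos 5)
Your proposal is correct and follows essentially the same route as the paper's proof: localize to obtain the Galois bimodule $P_{\rm loc}$ over the semisimple algebra $B=Q\otimes_Z A$, invoke Corollary \ref{charpoly} to get the polynomial $\chi_z$ with coefficients in ${\mathcal Z}(P_{\rm loc})$, descend these coefficients to $Z$ (via the Lemma \ref{inclo}-type integrality argument under (4), via projectivity/local freeness under (5)), intersect with $Z$ using torsion-freeness to land in $Z\cap A^K$, and finish (iii),(iv) as in Theorem \ref{maint}. The differences are only expository: you check the Galois property of $P_{\rm loc}$ by flatness rather than citing Lemma \ref{ext}(i), you run the integral-closure descent directly over the total quotient ring instead of first splitting $Z=\oplus_i Z_i$ via idempotents, and you make explicit the non-constant-rank bookkeeping over the connected components of $\Spec Z$ that the paper leaves implicit.
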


In the case when $A$ is commutative, this is again a special case 
of the result of \cite{S1} (for reduced agebras whose quotient rings 
are finite direct sums of fields). 
 
\begin{proof}
The proof is parallel to the proof of Theorem \ref{maint}. 
The proofs of (iii) and (iv) are the same, so we only comment on
the proofs of (i) and (ii). 

As in the proof of Theorem \ref{maint}, let $P=A\otimes K$, and
$P_{\rm loc}=Q\otimes_Z P$. 
This is a left module over the algebra
$B:=Q\otimes_Z A$, which is a semisimple algebra 
over $Q$ by (2), i.e., a direct sum of central simple algebras 
$B_i$ of dimensions $m_i^2$ over $Q_i$ for some $m_i$. 
Consider the right action of $Z$ 
on $Q\otimes_Z P$. This action defines a homomorphism 
$\psi_P: Z\to {\rm End}_Q(Q\otimes_Z P)\cong \oplus_{i=1}^n{\rm
  Mat}_{dm_i^2}(Q_i)$. 

We claim that for any $z\in Z$ which is not a zero divisor, the element
$\psi_P(z)$ is invertible. 
Indeed, let $x\in Q\otimes_Z P$ be such that $\psi_P(z)x=xz=0$. 
Let $x=w^{-1}y$, where $w\in Z$ (not a zero divisor), and $y\in P$. 
Hence, $yz$ is a 
torsion element of $P$, i.e. 
$z'yz=0$ in $P$ for some
$z'\in Z$, which is not a zero divisor. But by (3) (or by (5)), 
$A$ is torsion-free over $Z$, which implies 
that $P$ is torsion-free over $Z$ on each side.
So we get that $yz=0$ and hence $y=0$ and $x=0$, 
as desired. 

Thus, we see that the right action of $Z$ on 
$Q\otimes_Z P$ naturally extends to a right action of $Q$, i.e. 
$Q\otimes_Z P$ is a $Q$-bimodule. Hence, $Q\otimes_Z P$ is a 
bimodule over the semisimple algebra $B$, and 
$$
P_{\rm loc}=Q\otimes_Z P=Q\otimes_Z P\otimes_Z Q=B\otimes_A P\otimes_A B.
$$  

By Proposition \ref{coa}, $P:=A\otimes K$
is a Galois bimodule over $A$ of rank $d$.
This implies by Lemma \ref{ext}(i) 
that $P_{\rm loc}$ 
is a Galois bimodule over $B$ of rank $d$.

By Corollary \ref{charpoly}, 
for each $a\in Z$, the polynomial 
$\chi_a(t)\in Q[t]$ defined in Corollary \ref{charpoly}
has coefficients in ${\mathcal Z}(P_{\rm loc})=Q^K=Q\cap A^K$. 

Now, since $Z$ is integrally closed (i.e., is a normal
algebra), it is a direct sum of integrally closed 
domains $Z_1\oplus...\oplus Z_n$. Hence, 
Lemma \ref{inclo} implies that $\chi_a(t)\in Z[t]$. 

Moreover, if $z\in Z$ is central for $P_{\rm loc}$ 
then by (3) it is also central for $P$. 
Thus, the coefficients of $\chi_a$ belong 
to $Z\cap {\mathcal Z}(P)$. 
By Proposition \ref{coa}, this means that 
these coefficients belong to $Z\cap A^K$.
Hence $Z$ is integral over $Z\cap A^K$
(as $a\in Z$ is annihilated by the polynomial $\chi_a$). 
Thus, (i) is proved. 

To prove (ii), it suffices to note that by (5), 
for $a\in Z$ the coefficients of the polynomial $\chi_a(t)$ of
Proposition \ref{censim2} are in $Z$, 
after which the proof is the same as in (i). 
\end{proof} 

\begin{corollary}\label{loc} 
In the situation of Theorem \ref{maint1}, 
the coaction of $K$ on $A$ uniquely extends 
to $Q\otimes_Z A$. 
\end{corollary}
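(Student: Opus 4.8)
The plan is to mimic the argument used in the domain case (the Corollary preceding Proposition \ref{divisi}): first extend the coaction to the localization of $A$ at the ring of central invariants, and then identify that localization with $Q\otimes_Z A$ by invoking the integrality statement of Theorem \ref{maint1}. Set $R:=Z\cap A^K$. This is a central $k$-subalgebra of $A$ on which $K$ coacts trivially, i.e. $\rho(r)=r\otimes 1$ for $r\in R$; being a subring of the reduced ring $Z$, it is reduced. Let $T$ be the set of non-zero-divisors of $R$ and $\tilde R:=T^{-1}R$ its total quotient ring.

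The first step is to extend $\rho$ to $\tilde R\otimes_R A = T^{-1}A$. Each $t\in T$ has $\rho(t)=t\otimes 1$, a central element of $A\otimes K$, whose image in $T^{-1}A\otimes K = T^{-1}(A\otimes K)$ is invertible with inverse $(1/t)\otimes 1$. Hence the algebra homomorphism $A\xrightarrow{\rho}A\otimes K\to T^{-1}A\otimes K$ sends $T$ into units, so by the universal property of localization it factors uniquely through $T^{-1}A$, yielding an algebra homomorphism $\bar\rho\colon T^{-1}A\to (T^{-1}A)\otimes K$. The comodule identities $(\bar\rho\otimes 1)\bar\rho=(1\otimes\Delta)\bar\rho$ and $(1\otimes\varepsilon)\bar\rho=\mathrm{id}$ hold because they hold after restriction to $A$, both sides are algebra homomorphisms out of $T^{-1}A$, and $A\to T^{-1}A$ is an epimorphism of rings.

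The second step is the identification $\tilde R\otimes_R A = Q\otimes_Z A$. Since localization commutes with base change, $\tilde R\otimes_R A=(\tilde R\otimes_R Z)\otimes_Z A$, so it suffices to prove $\tilde R\otimes_R Z=Q$, and this is where Theorem \ref{maint1} is used: $Z$ is integral over $R$. I would then argue by standard commutative algebra with minimal primes (valid for reduced, possibly non-Noetherian, rings): in a reduced ring the non-zero-divisors are exactly the elements lying in no minimal prime; every minimal prime of $R$ is the contraction of a minimal prime of $Z$, so (as $Q=Q_1\oplus\cdots\oplus Q_n$ is a finite product of fields, $Z$ has finitely many minimal primes) $R$ has finitely many minimal primes $\mathfrak n_1,\dots,\mathfrak n_l$; a non-zero-divisor of $R$ remains one in $Z$ (if $tz=0$ with $t\in T$ and $z\ne 0$, pick a minimal prime $\mathfrak q$ of $Z$ with $z\notin\mathfrak q$; then $t\in\mathfrak q\cap R$, a prime of $R$ containing some $\mathfrak n_j$, contradicting $t\in T$); hence $\tilde R\otimes_R Z=T^{-1}Z$ is a subring of $Q$, and writing $\tilde R=\prod_j R_{\mathfrak n_j}$ (each $R_{\mathfrak n_j}$ a field) gives $T^{-1}Z=\prod_j Z_{\mathfrak n_j}$ with each $Z_{\mathfrak n_j}$ reduced, of Krull dimension $0$, and with finitely many primes, hence a finite product of fields; thus $T^{-1}Z$ is its own total quotient ring and equals $Q$. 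Therefore $\bar\rho$ is a coaction of $K$ on $Q\otimes_Z A$ extending $\rho$.

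For uniqueness, observe that $Q\otimes_Z A=S^{-1}A$ is the localization of $A$ at the central multiplicative set $S$ of non-zero-divisors of $Z$, so $A\to Q\otimes_Z A$ is a ring epimorphism; any coaction of $K$ on $Q\otimes_Z A$ restricting to $\rho$ on $A$ is an algebra homomorphism and is thus determined by its restriction to $A$. The step I expect to be the main obstacle is the commutative-algebra identification $\tilde R\otimes_R Z=Q$: one must check carefully that the non-zero-divisors of $R=Z\cap A^K$ stay non-zero-divisors in $Z$ and that localizing $Z$ at them already yields its total quotient ring, and this genuinely relies on the integrality of $Z$ over $R$ from Theorem \ref{maint1} (together with the hypothesis that $Q$ is a finite product of fields) rather than being purely formal.
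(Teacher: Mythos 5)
Your route is the same as the paper's (whose proof is the one-line identification $Q\otimes_Z A=Q^K\otimes_{Z^K}A$ via Theorem \ref{maint1}): extend the coaction by inverting central invariant elements, then identify the result with $Q\otimes_Z A$ using integrality. Your Step 1 and the uniqueness argument via the ring epimorphism $A\to Q\otimes_Z A$ are fine. The genuine gap is exactly at the point you flag as the main obstacle: the claim that a non-zero-divisor $t$ of $R=Z\cap A^K$ remains a non-zero-divisor in $Z$ (equivalently, that every minimal prime of $Z$ contracts to a minimal prime of $R$). Your justification is a non sequitur: from $tz=0$, $z\ne 0$, you get $t\in\mathfrak{q}\cap R$ for a minimal prime $\mathfrak{q}$ of $Z$, and you declare a contradiction because $\mathfrak{q}\cap R$ ``contains some $\mathfrak{n}_j$''; but lying in a prime that merely contains a minimal prime does not make $t$ a zero-divisor. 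What you need is that $\mathfrak{q}\cap R$ is itself a minimal prime of $R$, and this does not follow from integrality (going-down fails for integral extensions, and contractions of minimal primes need not be minimal).

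Indeed, at the level of generality at which you argue (reduced rings, $Z$ integral over $R$, finitely many minimal primes) the claim is false: embed $R=k[x]$ into $Z=k[x]\times k$ by $f\mapsto (f,f(0))$. Then $Z$ is reduced and integral over $R$ (it is $R[e]$ for the idempotent $e=(0,1)$), its total quotient ring is $Q=k(x)\oplus k$, yet $x$ is a non-zero-divisor in $R$ and a zero-divisor in $Z$; here $T^{-1}Z=k(x)\ne Q$, and the minimal prime $k[x]\times 0$ of $Z$ contracts to the non-minimal prime $(x)$ of $R$. So the key identity $T^{-1}Z=Q$ cannot be extracted from the bare integrality statement of Theorem \ref{maint1}; one must invoke the finer structure of the situation, e.g.\ that $R=Z\cap{\mathcal Z}(P_{\rm loc})$ where, by Theorem \ref{commsem} and Proposition \ref{censim2}, ${\mathcal Z}(P_{\rm loc})$ is a finite direct sum of fields, one for each connected component of the Galois bimodule $P_{\rm loc}=Q\otimes_Z A\otimes K$, each embedded diagonally into the corresponding block of $Q=\oplus_i Q_i$, and then show that the kernels of $R\to{\mathcal Z}^{(j)}$ are precisely the minimal primes of $R$ (in the classical group-action case this is rescued by transitivity of the group on primes of $Z$ over a prime of $Z^G$; here a substitute argument is required). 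Your remaining steps (the identification $T^{-1}Z=Q$ once non-zero-divisors are preserved, $T^{-1}A=Q\otimes_Z A$, and uniqueness) are correct, so this one sub-step is the only, but genuine, gap.
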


\begin{proof}
This follows since by Theorem \ref{maint1}, 
$Q\otimes_Z A=Q^K\otimes_{Z^K}A$. 
\end{proof}

\begin{remark}
As before, this is a special case of  
\cite{SV}, Theorem 2.2. 
\end{remark}

We also obtain the following generalization of Proposition \ref{divisi}:

\begin{proposition}\label{divisi2} 
Suppose $A=\oplus_{i=1}^nA_i$ is an indecomposable semisimple 
$K$-comodule algebra with center $Q$,
where $A_i$ are simple algebras of degrees $m_i$ over fields
$Q_i$ (so that $Q=\oplus_{i=1}^n Q_i$). 
Let $m_*$ be the greatest common divisor
of the $m_i$, and $d_i=[Q_i:Q^K]$. 
Then $\sum_{i=1}^n d_i(m_i/m_*)^2$ divides
$d=\dim K$. 
\end{proposition}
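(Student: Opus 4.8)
The plan is to deduce this exactly as Proposition \ref{divisi} was deduced, but now using the theory of Galois bimodules over semisimple algebras finite over the center from Section 4 instead of the field case.

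First I would apply Proposition \ref{coa}: the coaction $\rho$ makes $P:=A\otimes K$ into a Galois $A$-bimodule of rank $d=\dim K$, with center ${\mathcal Z}(P)={\mathcal Z}(A)^K=Q^K$. Since $A=\oplus_{i=1}^n A_i$ with each $A_i$ central simple of dimension $m_i^2$ over the field $Q_i$, the algebra $A$ is precisely a semisimple algebra finite over its center $Q=\oplus_i Q_i$ in the sense of Section 4.2, so Corollary \ref{divisi1} will be applicable to $P$ as soon as $P$ is known to be connected.

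The one genuinely substantive step --- and the place I expect the only real work to be --- is to show that indecomposability of $A$ as a $K$-comodule algebra forces the Galois bimodule $P$ to be connected, i.e.\ that its oriented incidence graph $\Gamma(P)$ is connected. I would argue by contraposition. If $\Gamma(P)$ is disconnected, partition the vertices into nonempty $S\sqcup T$ with no edges between them in either direction, and let $e_S,e_T$ be the corresponding sums of primitive central idempotents of $A$. Vanishing of all the blocks $e_T P e_S$ translates, via the bimodule structure $e_T\circ x\circ e_S=(e_T\otimes 1)x\rho(e_S)$, into $(e_T\otimes 1)(A\otimes K)\rho(e_S)=0$; evaluating on $1\otimes 1\in A\otimes K$ gives $(e_T\otimes 1)\rho(e_S)=0$, hence $\rho(e_S)\in e_S A\otimes K$, and the symmetric computation gives $\rho(e_T)\in e_T A\otimes K$. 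Combined with $\rho(e_S)+\rho(e_T)=\rho(1)=1\otimes 1=e_S\otimes 1+e_T\otimes 1$, this yields $\rho(e_S)=e_S\otimes 1$, so $e_S$ is $K$-invariant and $A=e_SA\oplus e_TA$ is a decomposition into nonzero $K$-comodule subalgebras --- contradicting indecomposability. (If one prefers, ``indecomposable'' can simply be taken to mean that this translation is built in, in which case this step is a definition.)

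Once connectedness is in place, the conclusion is immediate: apply Corollary \ref{divisi1} to the connected Galois $A$-bimodule $P$ of rank $d$, with $L_i=Q_i$, center ${\mathcal Z}(P)=Q^K$ (a field by Proposition \ref{censim2}), and $d_i=[Q_i:Q^K]$. This gives that $\sum_{i=1}^n d_i(m_i/m_*)^2$ divides $d=\dim K$, as claimed. Thus the proof is essentially bookkeeping: match the invariant $Q^K$, the degrees $d_i$, and the ranks against the data of Corollary \ref{divisi1}, whose divisibility assertion in turn rests only on the isomorphism $P^{m_*^2}\cong (A\otimes_{Q^K}A)^r$ from Proposition \ref{censim2} together with a count of ranks over $A$ as a left module.
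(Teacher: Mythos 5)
Your proof is correct and follows essentially the same route as the paper: the paper's entire proof is the observation that $A\otimes K$ is a Galois $A$-bimodule of rank $d$ (Proposition \ref{coa}) followed by an appeal to Corollary \ref{divisi1}. The only difference is that you make explicit why indecomposability of $A$ as a $K$-comodule algebra forces the bimodule to be connected (the central idempotent computation yielding $\rho(e_S)=e_S\otimes 1$), a verification the paper leaves implicit; your argument for it is correct and is precisely what is needed for Corollary \ref{divisi1} to apply.
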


\begin{proof} $A\otimes K$ is a Galois $A$-bimodule of rank $d$,
  so the statement follows from Proposition \ref{divisi1}. 
\end{proof} 

\begin{remark}
Here is another proof of Proposition \ref{divisi2} in characteristic zero,
using the theory of tensor categories (this approach also works in characteristic $p$ with some complications). 
By Theorem \ref{maint1}, $[L_i:{\mathcal Z}]=d_i<\infty$. So, tensoring over ${\mathcal Z}$ 
with the algebraic closure $\overline{\mathcal Z}$, we obtain a semisimple algebra $\overline{A}$ 
with a coaction of a Hopf algebra $\overline{K}$ (both finite dimensional over $\overline{\mathcal Z}$). 
Then ${\rm Rep}\overline{A}$ is a semisimple indecomposable module category 
over ${\rm Rep}\overline{K}$. Thus, one can define canonical Frobenius-Perron dimensions
of objects in ${\rm Rep}\overline{A}$, as in \cite{ENO}, Subsection 2.5. They are defined 
by the formula 
$$
{\rm FPdim}(M_i)^2=\frac{dn_i^2}{\sum_j n_j^2},
$$
where $n_j$ is the ordinary dimension of $M_j$. 
These dimensions are known to be integers, since 
${\rm FPdim}(M_i)^2={\rm FPdim} \underline{\rm End}(M_i)$. 
This implies that if $n_*$ is the greatest common divisor of the 
$n_i$ then $\frac{dn_*^2}{\sum_j n_j^2}\in \Bbb Z$. 
But $n_*=m_*$, and $(n_j)$ is the collection where each $m_i$ is repeated 
$d_i$ times. This implies the statement.   
\end{remark}

\section{The tensor category of finite dimensional $L$-bimodules}

The theory of Galois bimodules over a field $L$ is closely
related to the theory developed in \cite{FO}. 
Let us discuss this connection. 
For simplicity, we assume that ${\rm char}(L)=0$, although 
this discussion can be extended to positive characteristic as
well (at the cost of losing semisimplicity of bimodules). 

In \cite{FO}, the authors consider 
the semisimple tensor category $L-{\rm bimod}_E$ of 
finite dimensional $L$-bimodules 
$M$ that split over some finite Galois extension 
$E$ of $L$ (or are $E$-balanced, in the language of \cite{FO}), 
i.e., $E\otimes_LM\otimes_L E$ 
is a direct sum of bimodules $Eg$ for $g\in {\rm Aut}(E)$. 
They show that the Grothendieck ring of this category tensored
with $\Bbb Q$, $K_0(L-{\rm bimod}_E)\otimes{\Bbb Q}$,
is naturally isomorphic to the Hecke algebra ${\mathcal H}({\rm
  Aut}(E),H)$, where $H={\rm Gal}(E/L)$ 
(i.e., the convolution algebra of $\Bbb Q$-valued 
$H$-biinvariant functions on ${\rm Aut}(E)$ with finite support). 
Thus, a Galois $L$-bimodule $P$ of rank $d$ which splits over $E$ 
defines a nonnegative idempotent $e=\frac{1}{d}[P]$ 
in this Hecke algebra. The support of such an idempotent is a 
multiplicatively closed finite subset of ${\rm Aut}(E)$,
so it is a finite subgroup $G\subset {\rm Aut}(E)$ containing $H$.   
Moreover, it is easy to see as in the proof of Proposition
\ref{p1} that the idempotent $e$ has to be 
defined by the formula $e(g)=\frac{1}{|G|}$ for all $g\in G$. 
So $[P](g)=\frac{d}{|G|}$. For this to define an integral class, 
we need $r:=|H|d/|G|$ to be an integer, and 
then $E\otimes_L P\otimes_L E=P(E,G)^r$, as explained
in Proposition \ref{spli2}. 

The results of Section 2 on Galois bimodules 
and the results of \cite{FO} can be used to prove the
following result about the tensor category $L-{\rm bimod}$
of $L$-bimodules which are finite dimensional as left and right $L$-vector 
spaces, which also gives a classification of Galois
$L$-bimodules.   

\begin{theorem}\label{tenscat} 
Suppose ${\mathcal C}\subset L-{\rm bimod}$ 
is a full abelian rigid tensor subcategory, which is semisimple and has
finitely many simple objects. Then there exists a unique 
subfield $F\subset L$ such that $[L:F]<\infty$, 
and ${\mathcal C}={\mathcal C}(F,L)$ 
is the category of $L$-bimodules which are linear over 
$F$, i.e. the right and left actions of $F$ coincide. 
The simple objects of this category are the simple 
direct summands in $L\otimes_F L$,
and they split over the Galois closure $E$ of $L$ over $F$.  
\end{theorem}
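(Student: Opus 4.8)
The plan is to manufacture from $\mathcal{C}$ a single Galois $L$-bimodule and then invoke Theorem \ref{classif}. Write $X_1=L,X_2,\dots,X_m$ for the simple objects of $\mathcal{C}$. Since $L$ is a field, $\dim_L(X\otimes_L Y)=\dim_L X\cdot\dim_L Y$ (as $X\cong L^{\dim_L X}$ on the left, so $X\otimes_L Y\cong Y^{\dim_L X}$ on the left), and likewise $\dim(X\otimes_L Y)_L=\dim X_L\cdot\dim Y_L$, so left- and right-dimension are multiplicative characters of $K_0(\mathcal{C})$. First I would check that $\dim_L X=\dim X_L$ for every object: the ratio $r_X:=\dim X_L/\dim_L X$ satisfies $r_{X^{\otimes n}}=r_X^{\,n}$, while $X^{\otimes n}$ is a nonzero finite direct sum of the finitely many $X_i$, so $r_{X^{\otimes n}}$ is a weighted average of the finitely many positive numbers $r_{X_i}$ and thus lies in a fixed interval $[c^{-1},c]$ with $c>0$ independent of $n$; hence $r_X^{\,n}\in[c^{-1},c]$ for all $n$, forcing $r_X=1$. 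Write $d(X)$ for this common value, and $d_i:=d(X_i)$.

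Next I would form $P:=\bigoplus_i X_i^{\,d_i}\in\mathcal{C}$ and set $d:=\sum_i d_i^{\,2}$; by the previous paragraph $P\cong L^d$ as a left and as a right $L$-module. To see $P\otimes_L P\cong P^d$ it suffices to verify $[P]^2=d\,[P]$ in $K_0(\mathcal{C})$, i.e. $[X_j]\,\rho=d_j\,\rho$ for $\rho:=[P]=\sum_i d_i[X_i]$. Writing $[X_j\otimes_L X_i]=\sum_k N_{ji}^{\,k}[X_k]$, rigidity of $\mathcal{C}$ gives $N_{ji}^{\,k}=\dim\operatorname{Hom}(X_j^\vee\otimes_L X_k,X_i)=$ the multiplicity of $X_i$ in $X_j^\vee\otimes_L X_k$ (for a left dual $X_j^\vee\in\mathcal{C}$), so $\sum_i d_i N_{ji}^{\,k}=d(X_j^\vee\otimes_L X_k)=d(X_j^\vee)\,d(X_k)=d_j d_k$, using multiplicativity of $d$ and $d(X^\vee)=d(X)$ (duality exchanges the two $L$-dimensions, which already agree). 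Hence $[X_j]\rho=\sum_k d_j d_k[X_k]=d_j\rho$, so $\rho^2=d\rho$, and by semisimplicity $P\otimes_L P\cong P^d$. Thus $P$ is a Galois $L$-bimodule of rank $d$; put $F:=\mathcal{Z}(P)$.

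Now Theorem \ref{classif} applies: $[L:F]<\infty$, $[L:F]\mid d$, and $P\cong(L\otimes_F L)^{r}$ with $r=d/[L:F]$. Since each $d_i\ge 1$, every simple object of $\mathcal{C}$ is a direct summand of $P$, hence (Krull--Schmidt) a simple summand of $L\otimes_F L$; conversely each simple summand of $L\otimes_F L$ is a summand of $P\in\mathcal{C}$ and so lies in $\mathcal{C}$. Therefore $\mathcal{C}$ and $\mathcal{C}(F,L)$ — the latter semisimple because in characteristic $0$ the algebra $L\otimes_F L$ is commutative semisimple — are semisimple full subcategories of $L\text{-bimod}$ with the same simple objects, namely the simple summands of $L\otimes_F L$, so $\mathcal{C}=\mathcal{C}(F,L)$. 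These simples split over the Galois closure $E$ of $L/F$, since $E\otimes_L(L\otimes_F L)\otimes_L E=E\otimes_F E\cong\bigoplus_{g\in\operatorname{Gal}(E/F)}Eg$ is split. Finally $F$ is intrinsic: every object of $\mathcal{C}(F,L)$ is $F$-linear, so $F\subseteq\bigcap_{X\in\mathcal{C}}\mathcal{Z}(X)$, while $\mathcal{Z}(L\otimes_F L)=F$ gives the reverse inclusion; hence $F=\bigcap_{X\in\mathcal{C}}\mathcal{Z}(X)$ is uniquely determined by $\mathcal{C}$.

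I expect the main obstacle to be exactly the passage from the abstract category to a concrete Galois bimodule — establishing that left and right $L$-dimensions of every object coincide and that $\rho=\sum_i d_i[X_i]$ is absorbing, which is where rigidity is genuinely used; once $P$ is in hand, the classification of Galois bimodules over a field does the rest and the identification with $\mathcal{C}(F,L)$ is formal. Alternatively, one could embed $\mathcal{C}$ into the category $L\text{-bimod}_E$ of \cite{FO}, in which $K_0\otimes\mathbb{Q}\cong\mathcal{H}(\operatorname{Aut}(E),\operatorname{Gal}(E/L))$, and realize $\tfrac1d[P]$ as the normalized indicator function of a finite subgroup $G\supseteq\operatorname{Gal}(E/L)$ as in the discussion preceding the theorem; this route yields the same conclusion.
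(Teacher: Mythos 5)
Your overall strategy (manufacture a Galois $L$-bimodule inside $\mathcal{C}$ and feed it to Theorem \ref{classif}) is exactly the paper's, and both the opening step (equality of left and right dimensions, via the same bounded-ratio trick as Proposition \ref{dimeq}) and the endgame (from a Galois $P$ containing every simple as a summand to $\mathcal{C}=\mathcal{C}(F,L)$ with $F=\mathcal{Z}(P)$ unique) are fine. The gap is in the middle: the claim that $\rho=\sum_i d_i[X_i]$, $d_i=\dim_L X_i$, is absorbing. You deduce $\sum_i d_iN_{ji}^{k}=d_jd_k$ from ``$N_{ji}^{k}$ equals the multiplicity of $X_i$ in $X_j^\vee\otimes_L X_k$'', but adjunction only identifies the Hom spaces; multiplicities are Hom-dimensions divided by $\dim\operatorname{End}$ of the relevant simple, so the true relation is $N_{ji}^{k}\dim\operatorname{End}(X_k)=\bigl(\text{mult of }X_i\text{ in }X_j^\vee\otimes_L X_k\bigr)\dim\operatorname{End}(X_i)$, and here the simples have genuinely different endomorphism algebras (they are various field extensions). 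Concrete counterexample: $F=\mathbb{Q}$, $L=\mathbb{Q}(\sqrt[3]{2})$, $\mathcal{C}=\mathcal{C}(F,L)$. Then $L\otimes_F L=X_1\oplus X_2$ with $X_1=L$ ($d_1=1$, $\operatorname{End}=L$ of degree $3$) and $X_2$ of left dimension $d_2=2$ ($\operatorname{End}(X_2)$ a field of degree $6$), and $[X_2]^2=2[X_1]+[X_2]$. Your $\rho=[X_1]+2[X_2]$ gives $[X_2]\rho=4[X_1]+3[X_2]\neq d_2\rho$ and $\rho^2=9[X_1]+8[X_2]\neq 5\rho$; equivalently, $P=X_1\oplus X_2^2$ has center $\mathbb{Q}$ and rank $5$, and $3=[L:\mathbb{Q}]$ does not divide $5$, so by Theorem \ref{classif} it cannot be Galois. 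So the vector of left dimensions is simply not the right system of multiplicities in general.

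The paper does not guess the multiplicities: it takes $r=(r_i)$ to be the Frobenius--Perron eigenvector of the matrix of right multiplication by $X=\oplus_iX_i$ on $K_0(\mathcal{C})$ (this matrix is irreducible by rigidity), observes that the FP eigenvalue is the integer $d(X)$ because the left dimension is a character positive on the basis, so $r$ can be rescaled to be integral, and then uses uniqueness of the positive eigenvector: the class of $P\otimes_LP$ is again a nonnegative eigenvector of right multiplication by $X$ with the same eigenvalue, hence proportional to $[P]$, and applying $d$ gives $P\otimes_LP\cong P^{d(P)}$. In the example above this yields $r=(1,1)$, i.e. $P=L\otimes_FL$ itself, not $(1,2)$. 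Your proposed fallback via the Hecke-algebra picture of \cite{FO} would also work, but there too the idempotent supported on a finite subgroup is extracted by the same Frobenius--Perron argument (as in Proposition \ref{p1}), not by dimension counts -- so in either route the FP step is the missing ingredient. (A minor additional slip: to see multiplicativity of $\dim_L$ you should split $Y$ as a free left module, not $X$; the conclusion is of course correct.)
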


\begin{proof}
Let $d(X)$ be the dimension of an $L$-bimodule $X$ as a left $L$-vector space.
Then $d: K_0({\mathcal C})\to \Bbb Z$ is a character. 
Let $X_i$ be the simple objects of ${\mathcal C}$. 
Let $X=\oplus_i X_i$.  
Since ${\mathcal C}$ is a tensor category, 
the matrix of right multiplication by $X$ in the basis $X_i$ 
 is an indecomposable nonnegative matrix. So by the Frobenius-Perron theorem there exist unique up to scaling 
positive real numbers $r_i$ such that $(\sum_i r_iX_i)X=\Lambda \sum_i r_iX_i$ 
in $K_0({\mathcal C})\otimes \Bbb R$, where $\Lambda$ is the largest positive eigenvalue of $X$. 
Computing the dimensions of both sides as a left vector space, we get 
that $\Lambda=d(X)$, which is an integer. This implies that 
the numbers $r_i$ can be scaled to be all integers as well; 
let us choose them in such a way. Let $P=\oplus_i r_i X_i$. 
By uniqueness of $r_i$, we get that $P\otimes_L P=P^{d(P)}$. 
Thus, $P$ is a Galois bimodule. So by Theorem \ref{classif}, 
$P$ is a multiple of $L\otimes_F L$ for some $F$ (namely,
$F={\mathcal Z}(P)$). This implies the desired
statement (as any simple $F$-linear $L$-bimodule is a quotient,
hence a direct summand, of $L\otimes_F L$).   
\end{proof} 

\begin{remark}
In the tensor category ${\mathcal C}(F,L)$, one has 
${\rm End}(\bold 1)=L$, but in general it is not 
$L$-linear but only $F$-linear.
It is a form over $F$, in the sense of \cite{EG}, 
of the multifusion category of ${\rm Fun}(G/H,E)$-bimodules, 
which is associated to the natural semilinear action 
of $G={\rm Gal}(E/F)$ on this category. The simple objects 
in this $F$-linear tensor category are then associated 
to orbits of $G$ on $G/H\times G/H$, 
which correspond to double cosets of $H$ in $G$, spanning the
Hecke algebra as in \cite{FO}. Note also that this category 
is independent on the choice of the Galois extension $E$ of 
$F$, as long as it contains $L$. 
\end{remark}

\begin{remark}
Theorem \ref{tenscat} generalizes to the case when the field 
$L$ is replaced by a central simple algebra $B$ 
with center $L$. Namely, recall that by 
Proposition \ref{censim1},
any Galois $B$-bimodule is semisimple and  
is a rational multiple of the bimodule $B\otimes_F B$ 
for a uniquely determined 
subfield $F\subset L$ such that $[L:F]<\infty$
(namely, $F={\mathcal Z}(P)$).
Now, if ${\mathcal C}\subset B-{\rm bimod}$ 
is a full abelian rigid tensor subcategory, which is semisimple and has
finitely many simple objects, then one shows similarly 
to Theorem \ref{tenscat} that there exists a unique 
subfield $F\subset L$ such that $[L:F]<\infty$, 
and ${\mathcal C}={\mathcal C}(F,B)$ 
is the category of $B$-bimodules which are linear over 
$F$, i.e. the right and left actions of $F$ coincide. 
The simple objects of this category are the simple direct 
summands in $B\otimes_F B$.

This is related to the results of \cite{EG} in the following way.  
The category ${\mathcal C}(F,B)$ is a twisted form of the category ${\mathcal C}(F,L)$ 
in the sense of \cite{EG}. According to \cite{EG}, such twisted forms 
which split over the Galois extension $E$ of $F$ containing $L$
correspond to elements of $H^2(G,{\rm Aut}_{\otimes}({\rm Id}))$.
In our situation ${\rm Aut}_{\otimes}({\rm Id})={\rm Fun}(G/H,E^\times)/E^\times$, 
and the long exact sequence of cohomology and the Shapiro lemma imply that 
$H^2(G,{\rm Aut}_{\otimes}({\rm Id}))$ contains the quotient 
$H^2(H,E^\times)/{\rm Im}H^2(G,E^\times)$, which parametrizes 
$E$-split central simple algebras $B$ over $L$ up to Morita equivalence 
modulo those of the form $B=L\otimes_F A$, where $A$ is a central simple algebra
over $F$. It is easy to show that under the correspondence of \cite{EG},
the category ${\mathcal C}(F,B)$ corresponds precisely to the class 
of the central simple algebra $B$ in $H^2(H,E^\times)/{\rm Im}H^2(G,E^\times)$.

In fact, it is easy to see explicitly that the categories ${\mathcal C}(F,B_1)$ 
and ${\mathcal C}(F,B_2)$ are equivalent if $B_1=B_2\otimes_F A$, where $A$ is a central simple algebra over $F$. 
The equivalence is defined by the formula $M\mapsto M\otimes_F A$ for a $B_2$-bimodule $M$ linear over $F$.  
\end{remark}

\end{document}